\numberwithin{equation}{section}
\newtheorem{thm}[equation]{Theorem}
\newtheorem{lem}[equation]{Lemma}
\newtheorem{prop}[equation]{Proposition}
\newtheorem{ques}[equation]{Question}
\newtheorem{prob}[equation]{Problem}
\theoremstyle{definition}
\newtheorem{defn}[equation]{Definition}
\newtheorem{obs}[equation]{Observation}
\newtheorem*{ack}{Acknowledgements}
\theoremstyle{remark}
\newtheorem{rem}[equation]{Remark}
\title{Forcing Quasirandomness in a Regular Tournament}
\author{Jonathan A. Noel\thanks{Department of Mathematics and Statistics, University of Victoria, Victoria, B.C., Canada.}$\text{ }^{,}$\thanks{E-mail: {\tt noelj@uvic.ca}. Research supported by NSERC Discovery Grant RGPIN-2021-02460 and NSERC Early Career Supplement DGECR-2021-00024 and a Start-Up Grant from the University of Victoria.} \and Arjun Ranganathan\thanks{Department of Mathematics, University College London, Gower Street, London WC1E 6BT, UK. E-mail: {\tt arjun.ranganathan.24@ucl.ac.uk}.} \and Lina M. Simbaqueba\footnotemark[1]$\text{ }^{,}$\thanks{E-mail: {\tt lmsimbaquebam@uvic.ca}. Research Supported by a Mitacs Globalink Graduate Fellowship.}}
\DeclareTextCompositeCommand{\v}{OT1}{l}{l\nobreak\hspace{-.1em}'}
\DeclareTextCompositeCommand{\v}{OT1}{t}{t\nobreak\hspace{-.1em}'\nobreak\hspace{-.15em}}
\DeclareMathOperator{\Aut}{Aut}
\DeclareMathOperator{\inj}{inj}
\DeclareMathOperator{\aut}{aut}
\begin{document}

\maketitle

\begin{abstract}

A tournament $H$ is said to \emph{force quasirandomness} if it has the property that a sequence $(T_n)_{n\in \mathbb{N}}$ of tournaments of increasing orders is quasirandom if and only if the homomorphism density of $H$ in $T_n$ tends to $(1/2)^{\binom{v(H)}{2}}$ as $n\to\infty$. It was recently shown that there is only one non-transitive tournament with this property. This is in contrast to the analogous problem for graphs, where there are numerous graphs that are known to force quasirandomness and the well known Forcing Conjecture suggests that there are many more. To obtain a richer family of characterizations of quasirandomness in tournaments, we propose a variant in which the tournaments $(T_n)_{n\in \mathbb{N}}$ are assumed to be ``nearly regular.'' We characterize the tournaments on at most 5 vertices which force quasirandomness under this stronger assumption.
\end{abstract}

\section{Introduction}
A combinatorial structure is said to be ``quasirandom'' if it shares various properties which hold with probability tending to 1 in a large random structure of the same kind. For example, a dense graph is quasirandom if, for any two ``large enough'' sets of vertices, the density of edges between them closely approximates the global edge density. Many seemingly different but formally equivalent characterizations of quasirandom graphs exist; see~\cite{Thomason87a,Thomason87b,Rodl86,ChungGrahamWilson89}.
For example, an equivalent definition of quasirandomness in dense graphs is that the number of closed walks of length four, i.e. homomorphisms from a $4$-cycle, approximately matches the expectation in a random graph with the same number of vertices and edges. The well-known Forcing Conjecture of Conlon, Fox and Sudakov~\cite{ConlonFoxSudakov10}, based on a question of Skokan and Thoma~\cite{SkokanThoma04}, asserts that quasirandomness in graphs can be characterized in terms of the number of homomorphisms of any fixed bipartite graph with at least one cycle. The notion of quasirandomness was first introduced by R\"odl~\cite{Rodl86}, Thomason~\cite{Thomason87a,Thomason87b} and Chung, Graham and Wilson~\cite{ChungGrahamWilson89}. Since then, it has continued to attract a great deal of attention~\cite{Gowers08,ChungGraham91a,ChungGraham90,Gowers06,KohayakawaRodlSkokan02,Chan+20,CrudeleDukesNoel23+,SkokanThoma04,ConlonFoxSudakov18,ConlonHanPersonSchacht12,Cooper04,KralPikhurko13,Griffiths13,DellamonicaRodl11,CoreglianoRazborov23,GrzesikKralPikhurko24,CooperKralLamaisonMohr22,Kurecka22,KralLeeNoel24+} and was an inspiration behind combinatorial limit theory~\cite{Lovasz12,LovaszSos08,Chung14}. 

Our focus in this paper is on obtaining new characterizations of quasirandomness in tournaments, as studied in~\cite{CoreglianoRazborov17,CoreglianoParenteSato19,Hancock+23,BucicLongShapiraSudakov21,KalyanasundaramShapira13,ChungGraham91b,Grzesik+23}; recall that a \emph{tournament} is a directed graph (i.e. a \emph{digraph}) with no loops and exactly one arc between every pair of distinct vertices. A \emph{homomorphism} from a digraph $H$ to a digraph $D$ is a map $f:V(H)\to V(D)$ such that $f(u)f(v)\in A(D)$ whenever $uv\in A(H)$, where $A(F)$ denotes the set of arcs of a digraph $F$. Let $\hom(H,D)$ denote the number of homomorphisms from $H$ to $D$ and define the \emph{homomorphism density} of $H$ in $D$ to be
\[t(H,D):=\frac{\hom(H,D)}{v(D)^{v(H)}}\]
where $v(F)$ denotes the number of vertices in a digraph $F$. Intuitively, $t(H,D)$ is the probability that a uniformly random function from $f:V(H)\to V(D)$ is a homomorphism. 


If $T_n$ is a uniformly random tournament on $n$ vertices, i.e., each arc is directed randomly in one of the two possible directions with equal probability independent of all other arcs, and $H$ is a tournament on $k$ vertices, then
\[\mathbb{E}(t(H,T_n)) = (1-o(1))(1/2)^{\binom{k}{2}}\]
where the asymptotics are as $n\to\infty$. Furthermore, an application of the Azuma--Hoeffding Inequality (see~\cite[Theorem~7.2.1]{AlonSpencer}) tells us that the random variable $t(H,T_n)$ is tightly concentrated around this expected value. This motivates the following definition. A sequence $(T_n)_{n\in \mathbb{N}}$ of tournaments is said to be \emph{quasirandom} if, for every $k\geq1$ and every $k$-vertex tournament $H$,
\[\lim_{n\to\infty}t(H,T_n) = (1/2)^{\binom{k}{2}}.\]
In other words, a sequence of tournaments is quasirandom if every finite ``pattern'' appears with nearly the same frequency in a tournament in this sequence as it would in a sequence of uniformly random tournaments. Note that every quasirandom sequence of tournaments must satisfy $v(T_n)\to\infty$ as $t(H,T_n)=0$ if $H$ is a tournament with $v(H)>v(T_n)$. An important idea in quasirandomness is that it can often be characterized in terms of the frequencies of only a small number of patterns. The following definition is studied in, e.g.,~\cite{Hancock+23,BucicLongShapiraSudakov21,CoreglianoRazborov17}.

\begin{defn}
A tournament $H$ with $k$ vertices is said to \emph{force quasirandomness} if any sequence $(T_n)_{n\in \mathbb{N}}$ with $v(T_n)\to \infty$ and $\lim_{n\to\infty}t(H,T_n)=(1/2)^{\binom{k}{2}}$ is quasirandom. 
\end{defn}


Loosely speaking, this means that if the density of $H$ in $T_n$ is asymptotically equal to its density in a random tournament, then this is true for any other tournament $H'$ as well. In contrast to many combinatorial structures, such as graphs, the class of tournaments which force quasirandomness is very restricted. The \emph{transitive tournament} on $k$ vertices, denoted $TT_k$, is the tournament obtained by listing the vertices in a row and directing all arcs to the right. The fact that $TT_k$ forces quasirandomness for all $k\geq4$ is a slight extension of~\cite[Exercise~10.44]{Lovasz79}, and was reproved in~\cite{CoreglianoRazborov17} using the flag algebra method. Coregliano, Parente and Sato~\cite{CoreglianoParenteSato19} also used flag algebras to obtain an example of a $5$-vertex non-transitive tournament that forces quasirandomness. This is tournament $H_{17}$ in the list of all tournaments on at most $5$ vertices provided in Figure~\ref{fig:smallTourns}. Buci\'c, Long, Shapira and Sudakov~\cite{BucicLongShapiraSudakov21} proved that there are no non-transitive tournaments on 7 or more vertices which force quasirandomness. Finally, Hancock et al.~\cite{Hancock+23} showed that there are no tournaments on at most 6 vertices which force quasirandomness, apart from transitive tournaments on $4,5$ or $6$ vertices and $H_{17}$, completing the characterization of tournaments which force quasirandomness.

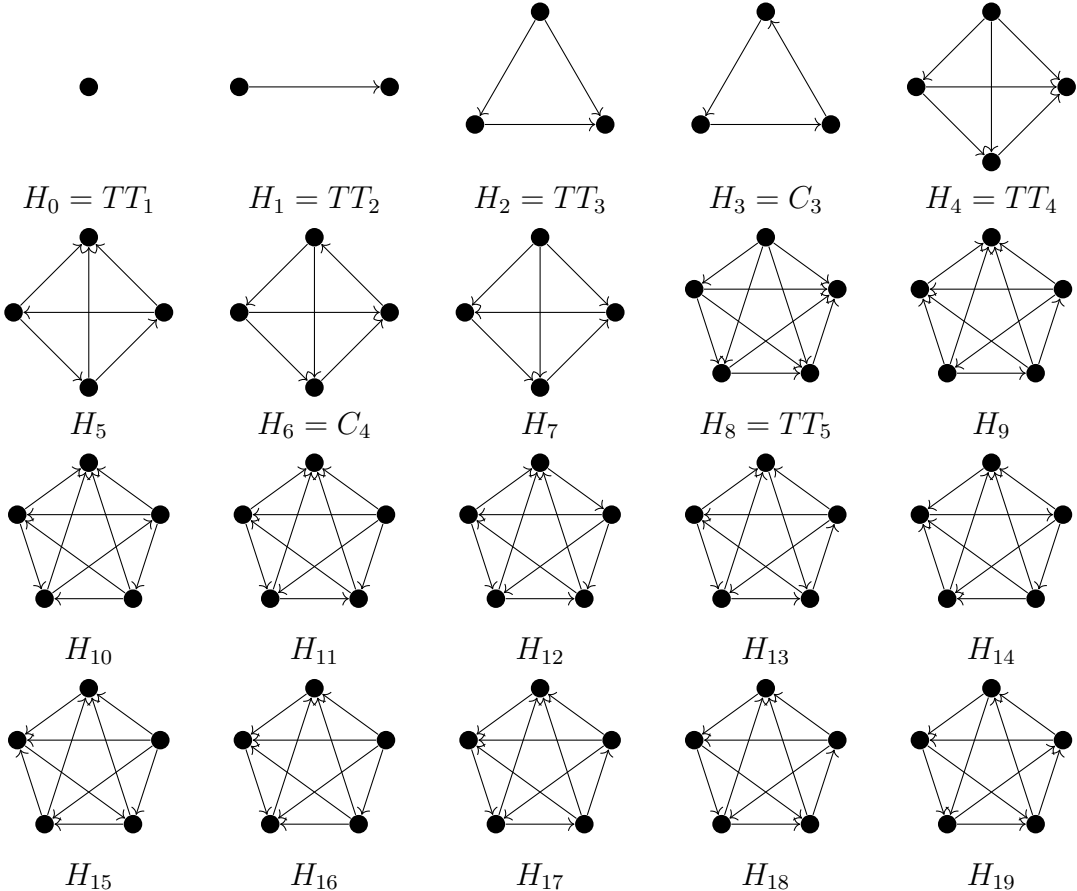
\begin{figure}[htbp]
    \begin{center}
    \begin{tikzpicture}[scale=1]
            \begin{scope}[xshift=0cm, yshift=0cm]
          \node[fill=black, circle, inner sep=0pt,minimum size=7pt] (1) {};
          \node (label) at (270:1.5cm) {$H_{0}=TT_1$};
        \end{scope}
        
        \begin{scope}[xshift=3cm, yshift=0.0cm]
          \node[fill=black, circle, inner sep=0pt,minimum size=7pt] at (180:1cm) (1) {};
          \node[fill=black, circle, inner sep=0pt,minimum size=7pt] at (0:1cm) (2) {};
          \node at (0,0) (3) {};
          \draw[->] (1) to (2);
          \node (label) at (270:1.5cm) {$H_{1}=TT_2$};
        \end{scope}
        
        \begin{scope}[xshift=6cm, yshift=0.0cm]
          \node[fill=black, circle, inner sep=0pt,minimum size=7pt] at (90:1cm) (1) {};
          \node[fill=black, circle, inner sep=0pt,minimum size=7pt] at (210:1cm) (2) {};
          \node[fill=black, circle, inner sep=0pt,minimum size=7pt] at (330:1cm) (3) {};
          \node at (270:0.7cm) (4) {};
          \draw[->] (1) to (2);
          \draw[->] (1) to (3);
          \draw[->] (2) to (3);
          \node (label) at (270:1.5cm) {$H_{2}=TT_3$};
        \end{scope}
        
        \begin{scope}[xshift=9cm, yshift=0.0cm]
          \node[fill=black, circle, inner sep=0pt,minimum size=7pt] at (90:1cm) (1) {};
          \node[fill=black, circle, inner sep=0pt,minimum size=7pt] at (210:1cm) (2) {};
          \node[fill=black, circle, inner sep=0pt,minimum size=7pt] at (330:1cm) (3) {};
          \node at (270:0.7cm) (4) {};
          \draw[->] (1) to (2);
          \draw[->] (2) to (3);
          \draw[->] (3) to (1);
          \node (label) at (270:1.5cm) {$H_{3}=C_3$};
        \end{scope}
        
        \begin{scope}[xshift=12cm, yshift=0.0cm]
          \node[fill=black, circle, inner sep=0pt,minimum size=7pt] at (90:1cm) (1) {};
          \node[fill=black, circle, inner sep=0pt,minimum size=7pt] at (180:1cm) (2) {};
          \node[fill=black, circle, inner sep=0pt,minimum size=7pt] at (270:1cm) (3) {};
          \node[fill=black, circle, inner sep=0pt,minimum size=7pt] at (0:1cm) (4) {};
          \node at (270:1cm) (label) {};
          \draw[->] (1) to (2);
          \draw[->] (1) to (3);
          \draw[->] (1) to (4);
          \draw[->] (2) to (3);
          \draw[->] (2) to (4);
          \draw[->] (3) to (4);
          \node (label) at (270:1.5cm) {$H_{4}=TT_4$};
        \end{scope}
        
        \begin{scope}[xshift=0.0cm, yshift=-3cm]
          \node[fill=black, circle, inner sep=0pt,minimum size=7pt] at (90:1cm) (1) {};
          \node[fill=black, circle, inner sep=0pt,minimum size=7pt] at (180:1cm) (2) {};
          \node[fill=black, circle, inner sep=0pt,minimum size=7pt] at (270:1cm) (3) {};
          \node[fill=black, circle, inner sep=0pt,minimum size=7pt] at (0:1cm) (4) {};
          \node at (270:1cm) (label) {};
          \draw[->] (2) to (1);
          \draw[->] (3) to (1);
          \draw[->] (4) to (1);
          \draw[->] (2) to (3);
          \draw[->] (3) to (4);
          \draw[->] (4) to (2);
          \node (label) at (270:1.5cm) {$H_5$};
        \end{scope}
        
        \begin{scope}[xshift=3.0cm, yshift=-3cm]
          \node[fill=black, circle, inner sep=0pt,minimum size=7pt] at (90:1cm) (1) {};
          \node[fill=black, circle, inner sep=0pt,minimum size=7pt] at (180:1cm) (2) {};
          \node[fill=black, circle, inner sep=0pt,minimum size=7pt] at (270:1cm) (3) {};
          \node[fill=black, circle, inner sep=0pt,minimum size=7pt] at (0:1cm) (4) {};
          \draw[->] (1) to (2);
          \draw[->] (2) to (3);
          \draw[->] (3) to (4);
          \draw[->] (4) to (1);
          \draw[->] (1) to (3);
          \draw[->] (2) to (4);
          \node (label) at (270:1.5cm) {$H_6=C_4$};
        \end{scope}
        
        \begin{scope}[xshift=6.0cm, yshift=-3cm]
          \node[fill=black, circle, inner sep=0pt,minimum size=7pt] at (90:1cm) (1) {};
          \node[fill=black, circle, inner sep=0pt,minimum size=7pt] at (180:1cm) (2) {};
          \node[fill=black, circle, inner sep=0pt,minimum size=7pt] at (270:1cm) (3) {};
          \node[fill=black, circle, inner sep=0pt,minimum size=7pt] at (0:1cm) (4) {};
          \node at (270:1cm) (label) {};
          \draw[->] (1) to (2);
          \draw[->] (1) to (3);
          \draw[->] (1) to (4);
          \draw[->] (2) to (3);
          \draw[->] (3) to (4);
          \draw[->] (4) to (2);
          \node (label) at (270:1.5cm) {$H_7$};
        \end{scope}
        
        \begin{scope}[xshift=9.0cm, yshift=-3cm]
          \node[fill=black, circle, inner sep=0pt,minimum size=7pt] at (90:1cm) (1) {};
          \node[fill=black, circle, inner sep=0pt,minimum size=7pt] at (162:1cm) (2) {};
          \node[fill=black, circle, inner sep=0pt,minimum size=7pt] at (234:1cm) (3) {};
          \node[fill=black, circle, inner sep=0pt,minimum size=7pt] at (306:1cm) (4) {};
          \node[fill=black, circle, inner sep=0pt,minimum size=7pt] at (18:1cm) (5) {};
          \node at (270:0.8cm) (label) {};
          \draw[->] (1) to (2);
          \draw[->] (1) to (3);
          \draw[->] (1) to (4);
          \draw[->] (1) to (5);
          \draw[->] (2) to (3);
          \draw[->] (2) to (4);
          \draw[->] (2) to (5);
          \draw[->] (3) to (4);
          \draw[->] (3) to (5);
          \draw[->] (4) to (5);
          \node (label) at (270:1.5cm) {$H_{8}=TT_5$};
        \end{scope}
        
        \begin{scope}[xshift=12.0cm, yshift=-3cm]
          \node[fill=black, circle, inner sep=0pt,minimum size=7pt] at (90:1cm) (1) {};
          \node[fill=black, circle, inner sep=0pt,minimum size=7pt] at (162:1cm) (2) {};
          \node[fill=black, circle, inner sep=0pt,minimum size=7pt] at (234:1cm) (3) {};
          \node[fill=black, circle, inner sep=0pt,minimum size=7pt] at (306:1cm) (4) {};
          \node[fill=black, circle, inner sep=0pt,minimum size=7pt] at (18:1cm) (5) {};
          \node at (270:0.8cm) (label) {};
          \draw[->] (2) to (1);
          \draw[->] (3) to (1);
          \draw[->] (4) to (1);
          \draw[->] (5) to (1);
          \draw[->] (3) to (2);
          \draw[->] (4) to (2);
          \draw[->] (5) to (2);
          \draw[->] (3) to (4);
          \draw[->] (5) to (3);
          \draw[->] (4) to (5);
          \node (label) at (270:1.5cm) {$H_9$};
        \end{scope}
        
        \begin{scope}[xshift=0.0cm, yshift=-6cm]
          \node[fill=black, circle, inner sep=0pt,minimum size=7pt] at (90:1cm) (1) {};
          \node[fill=black, circle, inner sep=0pt,minimum size=7pt] at (162:1cm) (2) {};
          \node[fill=black, circle, inner sep=0pt,minimum size=7pt] at (234:1cm) (3) {};
          \node[fill=black, circle, inner sep=0pt,minimum size=7pt] at (306:1cm) (4) {};
          \node[fill=black, circle, inner sep=0pt,minimum size=7pt] at (18:1cm) (5) {};
          \node at (270:0.8cm) (label) {};
          \draw[->] (2) to (1);
          \draw[->] (3) to (1);
          \draw[->] (4) to (1);
          \draw[->] (5) to (1);
          \draw[->] (2) to (3);
          \draw[->] (4) to (2);
          \draw[->] (5) to (2);
          \draw[->] (4) to (3);
          \draw[->] (3) to (5);
          \draw[->] (5) to (4);
          \node (label) at (270:1.5cm) {$H_{10}$};
        \end{scope}
        
        \begin{scope}[xshift=3.0cm, yshift=-6cm]
          \node[fill=black, circle, inner sep=0pt,minimum size=7pt] at (90:1cm) (1) {};
          \node[fill=black, circle, inner sep=0pt,minimum size=7pt] at (162:1cm) (2) {};
          \node[fill=black, circle, inner sep=0pt,minimum size=7pt] at (234:1cm) (3) {};
          \node[fill=black, circle, inner sep=0pt,minimum size=7pt] at (306:1cm) (4) {};
          \node[fill=black, circle, inner sep=0pt,minimum size=7pt] at (18:1cm) (5) {};
          \node at (270:0.8cm) (label) {};
          \draw[->] (2) to (1);
          \draw[->] (3) to (1);
          \draw[->] (4) to (1);
          \draw[->] (5) to (1);
          \draw[->] (2) to (3);
          \draw[->] (4) to (2);
          \draw[->] (5) to (2);
          \draw[->] (3) to (4);
          \draw[->] (5) to (3);
          \draw[->] (5) to (4);
          \node (label) at (270:1.5cm) {$H_{11}$};
        \end{scope}
        
        \begin{scope}[xshift=6cm, yshift=-6.0cm]
          \node[fill=black, circle, inner sep=0pt,minimum size=7pt] at (90:1cm) (1) {};
          \node[fill=black, circle, inner sep=0pt,minimum size=7pt] at (162:1cm) (2) {};
          \node[fill=black, circle, inner sep=0pt,minimum size=7pt] at (234:1cm) (3) {};
          \node[fill=black, circle, inner sep=0pt,minimum size=7pt] at (306:1cm) (4) {};
          \node[fill=black, circle, inner sep=0pt,minimum size=7pt] at (18:1cm) (5) {};
          \node at (270:0.8cm) (label) {};
          \draw[->] (2) to (1);
          \draw[->] (3) to (1);
          \draw[->] (4) to (1);
          \draw[->] (1) to (5);
          \draw[->] (2) to (3);
          \draw[->] (4) to (2);
          \draw[->] (5) to (2);
          \draw[->] (3) to (4);
          \draw[->] (5) to (3);
          \draw[->] (5) to (4);
          \node (label) at (270:1.5cm) {$H_{12}$};
        \end{scope}
        
        \begin{scope}[xshift=9cm, yshift=-6.0cm]
          \node[fill=black, circle, inner sep=0pt,minimum size=7pt] at (90:1cm) (1) {};
          \node[fill=black, circle, inner sep=0pt,minimum size=7pt] at (162:1cm) (2) {};
          \node[fill=black, circle, inner sep=0pt,minimum size=7pt] at (234:1cm) (3) {};
          \node[fill=black, circle, inner sep=0pt,minimum size=7pt] at (306:1cm) (4) {};
          \node[fill=black, circle, inner sep=0pt,minimum size=7pt] at (18:1cm) (5) {};
          \node at (270:0.8cm) (label) {};
          \draw[->] (2) to (1);
          \draw[->] (3) to (1);
          \draw[->] (1) to (4);
          \draw[->] (5) to (1);
          \draw[->] (2) to (3);
          \draw[->] (4) to (2);
          \draw[->] (5) to (2);
          \draw[->] (3) to (4);
          \draw[->] (5) to (3);
          \draw[->] (4) to (5);
          \node (label) at (270:1.5cm) {$H_{13}$};
        \end{scope}
        
        \begin{scope}[xshift=12cm, yshift=-6.0cm]
          \node[fill=black, circle, inner sep=0pt,minimum size=7pt] at (90:1cm) (1) {};
          \node[fill=black, circle, inner sep=0pt,minimum size=7pt] at (162:1cm) (2) {};
          \node[fill=black, circle, inner sep=0pt,minimum size=7pt] at (234:1cm) (3) {};
          \node[fill=black, circle, inner sep=0pt,minimum size=7pt] at (306:1cm) (4) {};
          \node[fill=black, circle, inner sep=0pt,minimum size=7pt] at (18:1cm) (5) {};
          \node at (270:0.8cm) (label) {};
          \draw[->] (1) to (2);
          \draw[->] (3) to (1);
          \draw[->] (4) to (1);
          \draw[->] (5) to (1);
          \draw[->] (3) to (2);
          \draw[->] (4) to (2);
          \draw[->] (2) to (5);
          \draw[->] (4) to (3);
          \draw[->] (5) to (3);
          \draw[->] (5) to (4);
          \node (label) at (270:1.5cm) {$H_{14}$};
        \end{scope}
        
        \begin{scope}[xshift=0cm, yshift=-9cm]
          \node[fill=black, circle, inner sep=0pt,minimum size=7pt] at (90:1cm) (1) {};
          \node[fill=black, circle, inner sep=0pt,minimum size=7pt] at (162:1cm) (2) {};
          \node[fill=black, circle, inner sep=0pt,minimum size=7pt] at (234:1cm) (3) {};
          \node[fill=black, circle, inner sep=0pt,minimum size=7pt] at (306:1cm) (4) {};
          \node[fill=black, circle, inner sep=0pt,minimum size=7pt] at (18:1cm) (5) {};
          \node at (270:0.8cm) (label) {};
          \draw[->] (1) to (2);
          \draw[->] (3) to (1);
          \draw[->] (4) to (1);
          \draw[->] (5) to (1);
          \draw[->] (3) to (2);
          \draw[->] (2) to (4);
          \draw[->] (5) to (2);
          \draw[->] (4) to (3);
          \draw[->] (5) to (3);
          \draw[->] (5) to (4);
          \node (label) at (270:1.5cm) {$H_{15}$};
        \end{scope}
        
        \begin{scope}[xshift=3cm, yshift=-9cm]
          \node[fill=black, circle, inner sep=0pt,minimum size=7pt] at (90:1cm) (1) {};
          \node[fill=black, circle, inner sep=0pt,minimum size=7pt] at (162:1cm) (2) {};
          \node[fill=black, circle, inner sep=0pt,minimum size=7pt] at (234:1cm) (3) {};
          \node[fill=black, circle, inner sep=0pt,minimum size=7pt] at (306:1cm) (4) {};
          \node[fill=black, circle, inner sep=0pt,minimum size=7pt] at (18:1cm) (5) {};
          \node at (270:0.8cm) (label) {};
          \draw[->] (1) to (2);
          \draw[->] (3) to (1);
          \draw[->] (4) to (1);
          \draw[->] (5) to (1);
          \draw[->] (2) to (3);
          \draw[->] (4) to (2);
          \draw[->] (5) to (2);
          \draw[->] (4) to (3);
          \draw[->] (5) to (3);
          \draw[->] (5) to (4);
          \node (label) at (270:1.5cm) {$H_{16}$};
        \end{scope}
        
        \begin{scope}[xshift=6cm, yshift=-9cm]
          \node[fill=black, circle, inner sep=0pt,minimum size=7pt] at (90:1cm) (1) {};
          \node[fill=black, circle, inner sep=0pt,minimum size=7pt] at (162:1cm) (2) {};
          \node[fill=black, circle, inner sep=0pt,minimum size=7pt] at (234:1cm) (3) {};
          \node[fill=black, circle, inner sep=0pt,minimum size=7pt] at (306:1cm) (4) {};
          \node[fill=black, circle, inner sep=0pt,minimum size=7pt] at (18:1cm) (5) {};
          \node at (270:0.8cm) (label) {};
          \draw[->] (1) to (2);
          \draw[->] (3) to (1);
          \draw[->] (4) to (1);
          \draw[->] (5) to (1);
          \draw[->] (2) to (3);
          \draw[->] (4) to (2);
          \draw[->] (5) to (2);
          \draw[->] (3) to (4);
          \draw[->] (5) to (3);
          \draw[->] (4) to (5);
          \node (label) at (270:1.5cm) {$H_{17}$};
        \end{scope}
        
        \begin{scope}[xshift=9cm, yshift=-9cm]
          \node[fill=black, circle, inner sep=0pt,minimum size=7pt] at (90:1cm) (1) {};
          \node[fill=black, circle, inner sep=0pt,minimum size=7pt] at (162:1cm) (2) {};
          \node[fill=black, circle, inner sep=0pt,minimum size=7pt] at (234:1cm) (3) {};
          \node[fill=black, circle, inner sep=0pt,minimum size=7pt] at (306:1cm) (4) {};
          \node[fill=black, circle, inner sep=0pt,minimum size=7pt] at (18:1cm) (5) {};
          \node at (270:0.8cm) (label) {};
          \draw[->] (1) to (2);
          \draw[->] (3) to (1);
          \draw[->] (4) to (1);
          \draw[->] (5) to (1);
          \draw[->] (2) to (3);
          \draw[->] (2) to (4);
          \draw[->] (5) to (2);
          \draw[->] (3) to (4);
          \draw[->] (5) to (3);
          \draw[->] (4) to (5);
          \node (label) at (270:1.5cm) {$H_{18}$};
        \end{scope}
        
        \begin{scope}[xshift=12cm, yshift=-9cm]
          \node[fill=black, circle, inner sep=0pt,minimum size=7pt] at (90:1cm) (1) {};
          \node[fill=black, circle, inner sep=0pt,minimum size=7pt] at (162:1cm) (2) {};
          \node[fill=black, circle, inner sep=0pt,minimum size=7pt] at (234:1cm) (3) {};
          \node[fill=black, circle, inner sep=0pt,minimum size=7pt] at (306:1cm) (4) {};
          \node[fill=black, circle, inner sep=0pt,minimum size=7pt] at (18:1cm) (5) {};
          \node at (270:0.8cm) (label) {};
          \draw[->] (1) to (2);
          \draw[->] (2) to (3);
          \draw[->] (3) to (4);
          \draw[->] (4) to (5);
          \draw[->] (5) to (1);
          \draw[->] (1) to (3);
          \draw[->] (3) to (5);
          \draw[->] (5) to (2);
          \draw[->] (2) to (4);
          \draw[->] (4) to (1);
          \node (label) at (270:1.5cm) {$H_{19}$};
        \end{scope}
    \end{tikzpicture}
    \end{center}
    \caption{The tournaments on at most $5$ vertices, up to isomorphism. There are 20 such tournaments labelled $H_0,\dots,H_{19}$. The tournaments $H_0,H_1,H_2,H_4$ and $H_8$ are transitive. The tournament $H_3$ is the cyclic tournament on 3 vertices and is also referred to as $C_3$. Similarly, $H_6$ is the unique $4$-vertex tournament containing a spanning cycle and is also referred to as $C_4$.}
    \label{fig:smallTourns}
\end{figure}

\begin{thm}[See~\cite{Lovasz79,BucicLongShapiraSudakov21,CoreglianoParenteSato19,Hancock+23,CoreglianoRazborov17}]
\label{th:qriff}
A tournament $H$ forces quasirandomness if and only if it is a transitive tournament on at least 4 vertices or is isomorphic to the tournament $H_{17}$ in Figure~\ref{fig:smallTourns}. 
\end{thm}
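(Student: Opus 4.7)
The plan is to deduce the theorem by assembling the results from the cited literature, each of which handles a different regime of $v(H)$.

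For the sufficiency direction, I would quote two results. First, the fact that $TT_k$ forces quasirandomness for every $k \geq 4$ is a slight extension of Exercise~10.44 of Lovász~\cite{Lovasz79} and was reproved by Coregliano and Razborov~\cite{CoreglianoRazborov17} using the flag algebra method; the underlying idea is that the correct density of $TT_k$ propagates to the correct density of $TT_4$, which in turn is equivalent to quasirandomness via a directed analogue of the Chung--Graham--Wilson equivalences. Second, Coregliano, Parente and Sato~\cite{CoreglianoParenteSato19} showed via a flag-algebra computation that $H_{17}$ forces quasirandomness.

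For the necessity direction, I would treat three regimes for $v(H)$. For $v(H) \leq 3$, the tournaments involved are $TT_1, TT_2, TT_3$ and $C_3$. Here $t(TT_1,T_n)=1$ is vacuous; $t(TT_2,T_n)$ depends only on the arc density and equals $1/2$ for every regular tournament; and a short computation shows that $t(TT_3,T_n)$ and $t(C_3,T_n)$ are symmetric functions of the outdegree sequence and tend to $1/8$ for any near-regular sequence, even though such sequences are not quasirandom in general (e.g.\ quasirandom blow-ups of a small non-quasirandom regular tournament). For $4 \leq v(H) \leq 6$, the classification of Hancock et al.~\cite{Hancock+23} supplies explicit non-quasirandom witnesses ruling out every tournament in this range except $TT_4, TT_5, TT_6$ and $H_{17}$. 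For $v(H) \geq 7$, Bucić, Long, Shapira and Sudakov~\cite{BucicLongShapiraSudakov21} rule out all non-transitive tournaments.

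The main obstacle in such a plan is the large-$v(H)$ case, which is the substantive contribution of~\cite{BucicLongShapiraSudakov21}: one must show that no non-transitive tournament on at least $7$ vertices can force quasirandomness, and their proof proceeds by deriving structural constraints on any hypothetical forcing $H$ that become incompatible with non-transitivity once $v(H)$ is sufficiently large. The sufficiency of $H_{17}$ and the finite case analysis in~\cite{Hancock+23} rely on flag-algebra certificates which, though computationally intricate, are conceptually more routine once the framework is set up, so I would treat them by direct citation.
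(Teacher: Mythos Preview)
Your proposal is correct and matches the paper's own treatment: the paper does not give an independent proof of this theorem but simply attributes the different regimes to the same references you cite (Lov\'asz / Coregliano--Razborov for $TT_k$ with $k\geq 4$, Coregliano--Parente--Sato for $H_{17}$, Hancock et al.\ for $v(H)\leq 6$, and Buci\'c--Long--Shapira--Sudakov for $v(H)\geq 7$). The only cosmetic difference is that you peel off the trivial cases $v(H)\leq 3$ by a direct degree-count argument, whereas the paper folds them into the citation of Hancock et al.
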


Our focus in this paper is on tournaments $H$ which force quasirandomness under the additional assumption that the sequence $(T_n)_{n\in \mathbb{N}}$ is \emph{nearly regular} meaning that, for every $\varepsilon>0$, there exists $n_0(\varepsilon)$ such that, for all $n\geq n_0(\varepsilon)$, all but at most $\varepsilon\cdot v(T_n)$ vertices of $T_n$ have out-degree between $\left(1/2 - \varepsilon\right)v(T_n)$ and $\left(1/2 + \varepsilon\right)v(T_n)$. Note that any nearly regular sequence of tournaments must satisfy $v(T_n)\to\infty$ since it is impossible for all vertices of $T_n$ to have out-degree greater than $\frac{v(T_n)-1}{2}$. Also, a sequence of uniformly random tournaments with the number of vertices tending to infinity is nearly regular with probability one; this follows from a simple application of the Chernoff bound. 

\begin{defn}
A tournament $H$ on $k$ vertices is said to \emph{force quasirandomness in regular tournaments} if any nearly regular sequence $(T_n)_{n\in \mathbb{N}}$ such that $\lim_{n\to\infty}t(H,T_n)=(1/2)^{\binom{k}{2}}$ is quasirandom.
\end{defn}

Our main result is the following theorem on tournaments on at most 5 vertices which force quasirandomness in regular tournaments. In contrast to Theorem~\ref{th:qriff}, the class of tournaments which force quasirandomness in regular tournaments seems as though it could be significantly broader than those which force quasirandomness in general (not necessarily regular) tournaments; see Questions~\ref{ques:infinitelyMany} and~\ref{ques:almostAll}. We note that a few cases of the next theorem follow from the earlier results of~\cite{CoreglianoParenteSato19,Lovasz79}; see Propositions~\ref{prop:alreadyKnownNeg},~\ref{prop:alreadyKnownPos} and~\ref{prop:C4}. 

\begin{thm}
\label{th:main}
Let $H_0,\dots,H_{19}$ be the list of all tournaments on at most $5$ vertices up to isomorphism, as in Figure~\ref{fig:smallTourns}. 
\begin{enumerate}[(i)]
    \item\label{thm:doNotForce} The tournaments $H_0,H_1,H_2,H_3,H_9,H_{12},H_{16},H_{18}$ and $H_{19}$ do not force quasirandomness in regular tournaments.
    \item\label{thm:doForce} The tournaments $H_4,H_5,H_6,H_7,H_8,H_{10},H_{11},H_{13},H_{14},H_{15}$ and $H_{17}$ force quasirandomness in regular tournaments.
\end{enumerate}
\end{thm}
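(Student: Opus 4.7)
The proof naturally splits into two parts. For the non-forcing assertions, the plan is to exhibit, for each listed tournament $H$, a nearly regular sequence $(T_n)_{n\in\mathbb{N}}$ that fails to be quasirandom yet satisfies $\lim_{n\to\infty}t(H,T_n) = (1/2)^{\binom{v(H)}{2}}$. The cases $H_0, H_1, H_2, H_3$ come essentially for free: their densities are pinned down to the quasirandom value in \emph{any} nearly regular sequence with $v(T_n)\to\infty$. Indeed, $t(H_0,T)\equiv 1$ and $t(H_1,T) = \binom{v(T)}{2}/v(T)^2 \to 1/2$ in any sequence with $v(T_n)\to\infty$; for $H_2 = TT_3$ and $H_3 = C_3$, the homomorphism densities are determined up to $o(1)$ by the out-degree sequence via the classical identity expressing the number of cyclic and transitive triples in terms of $\sum_v \binom{d^+(v)}{2}$, and near-regularity forces these sums to take their random value up to a negligible error. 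Hence any nearly regular non-quasirandom sequence (e.g., a balanced blow-up of a small regular tournament with each part replaced by a suitable quasirandom tournament) refutes forcing for $H_0, H_1, H_2, H_3$. For the remaining tournaments $H_9, H_{12}, H_{16}, H_{18}, H_{19}$, I would construct tailored counterexamples by taking a carefully weighted blow-up of a small tournament and tuning the blow-up structure so that the $H$-density matches the quasirandom value while some other small-tournament density does not.

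For the forcing assertions, the cases $H_4 = TT_4$, $H_8 = TT_5$, and $H_{17}$ are immediate from Theorem~\ref{th:qriff}, which requires no regularity hypothesis. For the remaining tournaments $H_5, H_6, H_7, H_{10}, H_{11}, H_{13}, H_{14}, H_{15}$, my plan is to establish, for each such $H$, an asymptotic identity of the shape
\[
t(H, T_n) \;=\; \alpha_H \cdot t(TT_k, T_n) \;+\; \sum_i \beta_i \cdot Q_i(T_n) \;+\; o(1),
\]
where $k \in \{4,5\}$, each $Q_i(T_n)$ is a quantity whose value is pinned down by near-regularity up to $o(1)$ (typically the density of a smaller tournament or a moment of the out-degree sequence), and $\alpha_H \neq 0$. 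Rearranging and invoking the hypothesis $t(H, T_n) \to (1/2)^{\binom{v(H)}{2}}$ then forces $t(TT_k, T_n) \to (1/2)^{\binom{k}{2}}$, and Theorem~\ref{th:qriff} delivers quasirandomness. For the four-vertex cases ($H_5, H_6, H_7$), I expect the required identity can be extracted from a direct double-counting argument that pivots on one distinguished vertex $v$ and analyzes its in- and out-neighbourhoods separately, using that both neighbourhoods have size close to $v(T_n)/2$ under near-regularity.

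The main obstacle will be locating the correct identity for each of the eight non-trivial forcing cases: one must decompose $t(H, T_n)$ into an explicit sum in which every term except the distinguished forcing density is controlled by the near-regularity hypothesis. The five-vertex cases $H_{10}, H_{11}, H_{13}, H_{14}, H_{15}$ are especially delicate, since the densities appearing in the decomposition may themselves be four- or five-vertex densities whose near-regular values must be computed, and bookkeeping the contributions of the degree-sequence moments is intricate. I expect that flag algebra computations, possibly assisted by semidefinite programming as in \cite{CoreglianoParenteSato19}, will be useful both for discovering and for verifying the required identities.
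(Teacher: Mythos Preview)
Your outline for part~(i) and for the ``easy'' forcing cases $H_4,H_8,H_{17}$ matches the paper. Your plan for the four-vertex cases $H_5,H_6,H_7$ also works and is essentially what the paper does: over regular tournamentons every four-vertex density is an affine function of $t(TT_4,W)$ alone (this is the content of the double-count in Proposition~\ref{prop:alreadyKnownPos} together with the identity $t(TT_3,W)-\tfrac18=t(TT_4,W)-t(C_4,W)$ in Lemma~\ref{lem:uniqueness}), so pinning $t(H,W)$ pins $t(TT_4,W)$ and Theorem~\ref{th:qriff} finishes. One small shortcut you miss: $H_{15}$ is $H_{10}$ with all arcs reversed, so it need not be handled separately.

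The genuine gap is your plan for the five-vertex forcing cases $H_{10},H_{11},H_{13},H_{14}$. You propose to find a \emph{linear identity}
\[
t(H,W)\;=\;\alpha_H\,t(TT_k,W)\;+\;\text{(terms fixed by regularity)}
\]
valid over regular $W$, and then rearrange. There is no reason to expect such an identity: regularity imposes only the single linear constraint $t(TT_3,W)=\tfrac18$ on the twelve five-vertex induced densities, so the feasible region is far from one-dimensional, and the pair $(t(H,W),t(TT_k,W))$ will not in general lie on a line. The paper does \emph{not} proceed this way. Instead, for each of $H_{10},H_{11},H_{13},H_{14}$ it proves a sharp \emph{inequality} valid for \emph{all} tournamentons,
\[
8\,t(C_3,W)+c\,t(H,W)\ \le\ \gamma\qquad\text{or}\qquad 8\,t(TT_3,W)+c\,t(H,W)\ \ge\ \gamma,
\]
with equality if and only if $W=\tfrac12$ a.e.\ (Theorems~\ref{th:H10}--\ref{th:H14}). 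These are obtained via explicit flag-algebra SDP certificates (Lemma~\ref{lem:flagLem}); the equality case is read off from the \emph{kernels} of the certificate matrices, which force $t(TT_4,W)=t(C_4,W)=\tfrac18 t(TT_3,W)$ and hence $W=\tfrac12$ via Lemma~\ref{lem:uniqueness}. The reduction from ``$\alpha\,C_3+\beta\,H$ (or $\alpha\,TT_3+\beta\,H$) forces quasirandomness'' to ``$H$ forces quasirandomness in regular tournaments'' is then Lemma~\ref{lem:linear}. So the flag-algebra/SDP step you anticipate is indeed the engine, but its output is a one-sided extremal inequality with a uniqueness clause, not a linear identity; your rearrange-and-invoke-$TT_k$ mechanism is the wrong target to aim the SDP at.
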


The rest of the paper is organized as follows. In the next section, we translate the notions of homomorphism density, near regularity and quasirandomness into the language of ``limit objects'' for tournaments. Then, in Section~\ref{sec:reductions}, we derive some necessary conditions and sufficient conditions for the property of forcing quasirandomness in regular tournaments that will be used in our proofs. In Section~\ref{sec:constructions}, we present several families of tournament limit constructions which allow us to prove the assertion in Theorem~\ref{th:main}~\ref{thm:doNotForce}. Then, in Section~\ref{sec:flags}, we give a brief introduction to the flag algebra method and use it to prove several inequalities on linear combinations of homomorphism densities of tournaments on $3$ and $5$ vertices and show that equality holds only for a quasirandom sequence of tournaments. Theorem~\ref{th:main}~\ref{thm:doForce} is then derived from these results. We conclude the paper in Section~\ref{sec:concl} by discussing a few open problems. 

\section{Preliminaries}
\label{sec:prelims}


We begin with some standard notation and terminology related to tournaments. The \emph{adjacency matrix} of a tournament $T$ with vertices $v_1,\dots,v_n$ is the $n\times n$ matrix $A$ in which the entry on the $i$th row and $j$th column is equal to $1$ if $T$ contains an arc from $v_i$ to $v_j$ and $0$ otherwise. The \emph{out-degree} of a vertex $v\in V(T)$, denoted $d_T^+(v)$ is the number of arcs of $T$ that start at $v$ and the \emph{in-degree} $d_T^-(v)$ is the number of arcs of $T$ that end at $v$. In other words, the out-degree is the sum of the entries in the row corresponding to $v$ in the adjacency matrix of $T$, and the in-degree is the sum of the entries in the column corresponding to $v$. 

Our next goal is to introduce some tenets of the standard limit theory for tournaments; for a deeper treatment of combinatorial limits (focusing especially on graphs), see Lov\'asz~\cite{Lovasz12}. We note that, for our purposes, there is no formal mathematical advantage of dealing with limit objects as opposed to infinite sequences of tournaments. The main benefits are stylistic in nature; i.e. the limit setting allows us to avoid distracting the reader by repeatedly mentioning that ``$n$ is large'' and keeping track of floors, ceilings, and lower order asymptotic terms which are technically necessary but ultimately immaterial.

Following the terminology\footnote{It seems that the community has not settled on a definitive name for limit objects of tournaments. Other sources use \emph{tourneyon}~\cite{ZhaoZhou20}, \emph{tournament limit}~\cite{Chan+20} or \emph{tournament kernel}~\cite{Thornblad18}; however, the term tournamenton seems like the most popular variant so far.} in~\cite{Hancock+23,Grzesik+23,SahSawhney23+,MaTang22,GrzesikKralLovaszVolec23}, a \emph{tournamenton} is a measurable function $W:[0,1]^2\to[0,1]$ with the property that $W(x,y)+W(y,x)=1$ for all $x,y\in[0,1]$. Essentially, a tournamenton is just a ``continuous generalization'' of the adjacency matrix of a tournament; the property $W(x,y)+W(y,x)=1$ is analogous to the fact that the adjacency matrix $A$ of a tournament $T$ satisfies $A_{i,j}+A_{j,i}=1$ for all $i\neq j$. Given a digraph $D$ with vertex set $V(D)=\{v_1,\dots,v_k\}$ and a tournamenton $W$, the \emph{homomorphism density} of $D$ in $W$ is defined to be the $k$-fold integral
\[t(D,W):=\int_0^1\cdots\int_0^1 \prod_{v_iv_j\in A(D)}W(x_i,x_j)dx_1\cdots dx_k.\]
Another way of viewing $t(D,W)$ is in terms of the following random procedure. Pick $k$ points $x_1,\dots,x_k$ from $[0,1]$ uniformly at random and independently from one another and let $\{v_1,\dots,v_k\}$ be a set of vertices. Then, for each $i\neq j$, add an arc from $v_i$ to $v_j$ with probability $W(x_i,x_j)$ and from $v_j$ to $v_i$ otherwise. The tournament $T$ resulting from this is known as a \emph{$W$-random tournament}. It is not hard to see that $t(D,W)$ is precisely the probability that all arcs of $D$ are contained in a $W$-random tournament $T$. 

A sequence $(T_n)_{n\in \mathbb{N}}$ of tournaments with $v(T_n)\to\infty$ is said to \emph{converge} to a tournamenton $W$ if $\lim_{n\to\infty}t(H,T_n)=t(H,W)$ for every tournament $H$. The following proposition is a standard extension of a fundamental theorem in graph limits of Lov\'asz and Szegedy~\cite{LovaszSzegedy06}. It is not new; similar ideas are used in, e.g.,~\cite[Subsection~2.2]{ChanGrzesikKralNoel20}.

\begin{prop}
\label{prop:limitsExist}
For every sequence $(T_n)_{n\in \mathbb{N}}$ of finite tournaments with $v(T_n)\to\infty$ there is a subsequence of $(T_n)_{n\in \mathbb{N}}$ that converges to a tournamenton $W$. On the other hand, for every tournamenton $W$, if $(T_n)_{n\in \mathbb{N}}$ is a sequence of $W$-random touranments with $v(T_n)\to\infty$, then $(T_n)_{n\in \mathbb{N}}$ converges to $W$ with probability $1$.
\end{prop}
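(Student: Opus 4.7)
The plan is to reduce both parts to the analogous well-established theorems for graphons due to Lov\'asz and Szegedy, with the modification that one must preserve the antisymmetry constraint $W(x,y)+W(y,x)=1$. For the first part, I would begin by enumerating all finite tournaments $H_1,H_2,\dots$ up to isomorphism. Since each $t(H_i,T_n)\in[0,1]$, a diagonal extraction produces a subsequence of $(T_n)_{n\in\mathbb{N}}$ along which $t(H_i,T_n)\to c_i\in[0,1]$ for every $i$. It then remains to produce a tournamenton $W$ with $t(H_i,W)=c_i$ for all $i$.

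To produce $W$, I would associate to each tournament $T$ a step tournamenton $W_T$ by partitioning $[0,1]$ into $v(T)$ equal subintervals $I_1,\dots,I_{v(T)}$ and declaring $W_T(x,y)$ to be the $(i,j)$-entry of the adjacency matrix of $T$ when $(x,y)\in I_i\times I_j$ for $i\neq j$ (and $W_T(x,y)=1/2$ on the diagonal blocks so that antisymmetry is preserved). A routine calculation, accounting only for non-injective maps $V(H)\to V(T)$, gives $|t(H,W_T)-t(H,T)|=O_H(1/v(T))$, so along the extracted subsequence $t(H_i,W_{T_n})\to c_i$ as well. The crux is to show that the sequence $(W_{T_n})$ has a subsequence converging to some measurable $W:[0,1]^2\to[0,1]$ in the sense that $t(H,W_{T_n})\to t(H,W)$ for every $H$. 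This is the direct analogue of the Lov\'asz--Szegedy compactness theorem; the proof carries over via either the weak regularity lemma plus a cut-distance argument, or via sampling i.i.d.\ uniform points from $[0,1]$ and invoking the Aldous--Hoover representation for the resulting exchangeable random infinite tournament. The antisymmetry constraint is preserved by both constructions, since it is preserved by each $W_{T_n}$ and is an almost-sure property.

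For the second part, fix a tournament $H$ on $k$ vertices. Conditioning on the $k$ sample points $x_1,\dots,x_k\in[0,1]$ used to generate $T_n$, a standard inclusion-exclusion over injective versus non-injective maps $V(H)\to V(T_n)$ gives
\[\mathbb{E}\bigl[t(H,T_n)\bigr] = t(H,W) + O_H\bigl(1/v(T_n)\bigr).\]
Flipping the orientation of a single arc of $T_n$ changes $t(H,T_n)$ by at most $O_H(1/v(T_n)^2)$, so the edge-exposure martingale and the Azuma--Hoeffding inequality (as cited in the excerpt) yield
\[\Pr\!\left[\bigl|t(H,T_n)-\mathbb{E}[t(H,T_n)]\bigr|>\varepsilon\right]\le 2\exp\!\bigl(-\Omega_H(\varepsilon^2 v(T_n)^2)\bigr).\]
Since $\sum_n \exp(-\Omega_H(\varepsilon^2 v(T_n)^2))<\infty$ (as $v(T_n)\to\infty$), Borel--Cantelli implies that almost surely $t(H,T_n)\to t(H,W)$. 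Taking a countable intersection over all $H$ and a countable sequence of $\varepsilon\to 0$ yields convergence with probability~$1$.

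The main obstacle is the compactness step, namely passing from a coherent family of limiting densities $(c_i)$ to an actual tournamenton $W$ realizing them. This is where one leans most heavily on the parallel to graphon theory; once the Lov\'asz--Szegedy type compactness is granted (or transcribed for tournamentons, respecting antisymmetry), the diagonal extraction and the concentration argument for the $W$-random direction are routine.
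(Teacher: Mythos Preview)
The paper does not actually prove this proposition: it is stated without proof, with the remark that it is ``a standard extension of a fundamental theorem in graph limits of Lov\'asz and Szegedy'' and a pointer to \cite{LovaszSzegedy06} and \cite[Subsection~2.2]{ChanGrzesikKralNoel20}. So there is no paper proof to compare against; your sketch is precisely the kind of argument the authors are implicitly invoking, and in outline it is correct.

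One technical point worth tightening in your second part: a $W$-random tournament on $n$ vertices carries two layers of randomness, the i.i.d.\ sample points $x_1,\dots,x_n\in[0,1]$ \emph{and} the subsequent coin flips for the arcs. Your phrase ``conditioning on the $k$ sample points $x_1,\dots,x_k$ used to generate $T_n$'' conflates $k=v(H)$ with $n=v(T_n)$, and your edge-exposure martingale only controls the arc randomness, not the point-sampling randomness. The cleanest fix is a single martingale that first reveals $x_1,\dots,x_n$ (each step changing $t(H,T_n)$ by $O_H(1/n)$) and then reveals the $\binom{n}{2}$ arc orientations (each changing it by $O_H(1/n^2)$); Azuma then gives a tail bound of $\exp(-\Omega_H(\varepsilon^2 n))$ rather than $\exp(-\Omega_H(\varepsilon^2 n^2))$. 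This is weaker than what you wrote but still summable, so Borel--Cantelli goes through unchanged.
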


It will sometimes be convenient to extend the notion of quasirandom forcing from tournaments to sets of formal linear combinations of tournaments. Let $\mathcal{T}$ denote the set of all finite tournaments and $\mathbb{R}[\mathcal{T}]$ be the set of all formal linear combinations of tournaments; i.e. expressions of the form $\sum_{i=1}^t\alpha_i \cdot H_i$ for $\alpha_1,\dots,\alpha_t\in\mathbb{R}$ and $H_1,\dots,H_t\in\mathcal{T}$. We regard a single tournament $H$ as being the same as the formal linear combination $1\cdot H$. Given such a formal linear combination $H=\sum_{i=1}^t\alpha_i \cdot H_i$ and a tournament $T$, define $t(H,T) = \sum_{i=1}^t\alpha_i\cdot t(H_i,T)$ and let $t(H,1/2)=\sum_{i=1}^t\alpha_i \cdot (1/2)^{\binom{v(H_i)}{2}}$. For a tournamenton $W$, we define $t(H,W)$ similarly. We say that a set $S\subseteq \mathbb{R}[\mathcal{T}]$ \emph{forces quasirandomness} if every sequence of tournaments $(T_n)_{n\in \mathbb{N}}$ such that $v(T_n)\to\infty$ and $\lim_{n\to\infty}t(H,T_n)=t(H,1/2)$ for all $H\in S$ is quasirandom. We say that $S$ \emph{forces quasirandomness in regular tournaments} if the same conclusion holds for any sequence of nearly regular tournaments. For $H\in\mathbb{R}[\mathcal{T}]$, we say that $H$ \emph{forces quasirandomness} or \emph{forces quasirandomness in regular tournaments} if the set $\{H\}$ does. The following proposition follows from standard results in combinatorial limit theory; in particular, it is a very slight extension of~\cite[Proposition~1]{Hancock+23}.

\begin{prop}
\label{prop:1/2everywhere}
Let $S\subseteq \mathbb{R}[\mathcal{T}]$. The set $S$ forces quasirandomness if and only if every tournamenton $W$ such that $t(H,W)=t(H,1/2)$ for all $H\in S$ has the property that $W(x,y)=1/2$ for almost all $(x,y)\in [0,1]^2$.
\end{prop}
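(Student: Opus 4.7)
The plan is to establish both directions of the equivalence by combining the sequential compactness provided by Proposition~\ref{prop:limitsExist} with the uniqueness (up to measure-preserving transformations) of a tournamenton with prescribed homomorphism densities. The key observation underlying the entire argument is that the constant tournamenton $W_{1/2}\equiv 1/2$ satisfies $t(H,W_{1/2})=(1/2)^{\binom{v(H)}{2}}=t(H,1/2)$ for every tournament $H$, so that a sequence $(T_n)$ with $v(T_n)\to\infty$ is quasirandom if and only if it converges to $W_{1/2}$ in the sense defined above.

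For the forward direction, suppose that $S$ forces quasirandomness, and let $W$ be any tournamenton with $t(H,W)=t(H,1/2)$ for every $H\in S$. I would construct a sequence $(T_n)_{n\in\mathbb{N}}$ of $W$-random tournaments with $v(T_n)\to\infty$. By the second half of Proposition~\ref{prop:limitsExist}, with probability one this sequence converges to $W$, so in particular $\lim_{n\to\infty}t(H,T_n)=t(H,W)=t(H,1/2)$ for all $H\in S$. The hypothesis that $S$ forces quasirandomness then yields that $(T_n)$ is quasirandom, so that $t(H',W)=\lim_{n\to\infty}t(H',T_n)=(1/2)^{\binom{v(H')}{2}}=t(H',W_{1/2})$ for \emph{every} tournament $H'$. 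Since $W$ and $W_{1/2}$ have identical homomorphism densities against every tournament, the standard uniqueness theorem for tournamentons (the analogue of the Lovász--Szegedy theorem for graphons) implies that $W$ is weakly isomorphic to $W_{1/2}$; but every measure-preserving rearrangement of a constant function is that constant function, so $W(x,y)=1/2$ for almost all $(x,y)\in[0,1]^2$.

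For the backward direction I would use a standard compactness argument by contradiction. Suppose that every tournamenton matching $1/2$ on $S$ equals $1/2$ almost everywhere, and let $(T_n)$ with $v(T_n)\to\infty$ satisfy $\lim_{n\to\infty}t(H,T_n)=t(H,1/2)$ for all $H\in S$, yet assume (for contradiction) that $(T_n)$ is not quasirandom. Then there is a tournament $H'$, an $\varepsilon>0$, and a subsequence $(T_{n_k})$ along which $|t(H',T_{n_k})-(1/2)^{\binom{v(H')}{2}}|\geq\varepsilon$. By the first half of Proposition~\ref{prop:limitsExist}, I can extract a further subsequence that converges to some tournamenton $W$. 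Passing to the limit along this sub-subsequence gives $t(H,W)=t(H,1/2)$ for every $H\in S$, so by hypothesis $W=1/2$ almost everywhere, which forces $t(H',W)=(1/2)^{\binom{v(H')}{2}}$, contradicting the choice of the subsequence.

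The only non-routine ingredient is the uniqueness step at the end of the forward direction, i.e.\ that a tournamenton with the same homomorphism densities as $W_{1/2}$ must equal $1/2$ almost everywhere. I expect this to be the main (minor) obstacle: one either has to cite the analogue of the Lovász--Szegedy uniqueness result in the tournamenton setting or, more directly, argue that $W-1/2$ is a symmetric antisymmetric perturbation whose moments against polynomial test functions all vanish, and then invoke a density argument to conclude that $W=1/2$ a.e. Everything else is a routine application of sequential compactness and the definitions set up in Proposition~\ref{prop:limitsExist}.
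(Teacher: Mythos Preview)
Your proposal is correct and is precisely the standard limit-theoretic argument that the paper has in mind: the paper does not give its own proof but simply states that the proposition ``follows from standard results in combinatorial limit theory'' and cites \cite[Proposition~1]{Hancock+23}, whose proof proceeds exactly via the compactness of Proposition~\ref{prop:limitsExist} together with the uniqueness theorem for limit objects. Your identification of the uniqueness step as the only non-routine ingredient is accurate, and the rest is indeed routine.
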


Given a tournamenton $W$ and $x\in [0,1]$, define the \emph{out-degree} of $x$ to be
\[d_W^+(x):=\int_0^1W(x,y)dy\]
and the \emph{in-degree} to be
\[d_W^-(x):=\int_0^1W(y,x)dy.\]
We say that $W$ is \emph{regular} if $d_W^+(x)=1/2$ for almost every $x\in[0,1]$. Note that, in contrast to the usual in- and out-degrees in tournaments, degrees in tournamentons are always between zero and one. One should think of these quantities as being analogous to the ``normalized'' degrees $d_T^+(v)/v(T)$ and $d_T^-(v)/v(T)$ in a tournament $T$ rather than the degrees themselves.


\section{Basic Reductions}
\label{sec:reductions}


We now obtain a mild extension of a classical result of Kendall and Babington Smith~\cite{KendallBabingtonSmith40} which roughly says that, among tournaments $T$ on $n$ vertices, the homomorphism density of $TT_3$ in $T$ is minimized by a tournament which is ``as regular as possible.'' The following result is well known.

\begin{prop}
\label{prop:regularTransitiveW}
Every tournamenton $W$ satisfies $t(TT_3,W)\geq 1/8$ with equality if and only if $W$ is regular.
\end{prop}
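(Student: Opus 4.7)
The plan is to reduce $t(TT_3, W)$ to a one-variable integral of the out-degree function $d_W^+$, and then conclude by the Cauchy--Schwarz inequality together with the identity $\int_0^1 d_W^+(x)\,dx = 1/2$.

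First I would expand using the definition of homomorphism density. The arcs of $TT_3$ can be taken to be $1\to 2$, $1\to 3$, $2\to 3$, so
\[t(TT_3, W) = \int_0^1\int_0^1\int_0^1 W(x,y)\,W(x,z)\,W(y,z)\,dx\,dy\,dz.\]
Next, I would consider the simpler integral $\int W(x,y)\,W(x,z)\,dx\,dy\,dz$ and apply the defining identity $W(y,z) + W(z,y) = 1$ to its integrand to split it into two pieces, each of which equals $t(TT_3, W)$ (the piece with factor $W(z,y)$ after swapping the dummy labels $y$ and $z$). Combined with Fubini, this yields
\[2\,t(TT_3, W) = \int_0^1\int_0^1\int_0^1 W(x,y)\,W(x,z)\,dx\,dy\,dz = \int_0^1 d_W^+(x)^2\,dx.\]

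From here, Cauchy--Schwarz applied to $d_W^+$ and the constant function $1$ on $[0,1]$ gives
\[\int_0^1 d_W^+(x)^2\,dx \;\geq\; \left(\int_0^1 d_W^+(x)\,dx\right)^2 = \frac{1}{4},\]
where the identity $\int_0^1 d_W^+(x)\,dx = 1/2$ on the right comes from integrating $W(x,y) + W(y,x) = 1$ over $[0,1]^2$. Chaining the two displays gives $t(TT_3, W) \geq 1/8$. For the equality case, I would appeal to the equality case of Cauchy--Schwarz, which forces $d_W^+$ to be constant almost everywhere; coupled with $\int_0^1 d_W^+(x)\,dx = 1/2$, this forces $d_W^+(x) = 1/2$ for almost every $x$, which is precisely the definition of $W$ being regular.

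The argument presents no genuine obstacle; it amounts to one integration-variable rearrangement followed by a single application of Cauchy--Schwarz. The only point that deserves a moment's care is the symmetry claim used to identify both halves of the split integral with $t(TT_3, W)$, but this is immediate from relabeling the dummy variables.
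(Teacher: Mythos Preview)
Your proof is correct and essentially identical to the paper's. The only cosmetic difference is that you phrase the inequality step as Cauchy--Schwarz against the constant function $1$, whereas the paper phrases it as Jensen's inequality for $z\mapsto z^2$; for this particular function these are the same inequality, with the same equality case.
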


\begin{proof}
Let $W$ be any tournameton. Then
\[t(TT_3,W) = \int_0^1\int_0^1\int_0^1W(x,y)W(x,z)W(y,z)dxdydz\]
but also
\[t(TT_3,W) = \int_0^1\int_0^1\int_0^1W(x,y)W(x,z)W(z,y)dxdydz.\]
Therefore, summing these expressions and using that $W(y,z)+W(z,y)=1$, we get
\begin{equation}
\label{eq:outdeg}
\begin{gathered}
2t(TT_3,W) = \int_0^1\int_0^1\int_0^1W(x,y)W(x,z)dxdydz= \\\int_0^1\left(\int_0^1W(x,y)dy\right)\left(\int_0^1W(x,z)dz\right)dx =\int_0^1 \left(d^+_W(x)\right)^2dx.
\end{gathered}
\end{equation}
Since $W$ is a tournamenton, we have that $\int_0^1 d_W^+(x)dx=1/2$. So, by Jensen's Inequality, 
\[\int_0^1 \left(d^+_W(x)\right)^2dx \geq \left(\int_0^1d_W^+(x)dx\right)^2 = 1/4.\]
Combining this with \eqref{eq:outdeg}, we get $t(TT_3,W)\geq 1/8$. By strict convexity of the function $f(z)=z^2$, we have that equality holds in the above inequality if and only if $d_W^+(x)=d_W^+(y)$ for almost all $x$ and $y$, which can only occur if $d_W^+(x)=1/2$ for almost all $x$.
\end{proof}

We now derive an analogous, but opposite, result for $C_3$; recall from Figure~\ref{fig:smallTourns} that $C_3$ is the cyclic tournament on $3$ vertices. The following result is also well known.

\begin{prop}
\label{prop:regularCyclicW}
Every tournamenton $W$ satisfies $t(C_3,W)\leq 1/8$ with equality if and only if $W$ is regular.
\end{prop}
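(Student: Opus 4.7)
The plan is to reduce this statement directly to Proposition~\ref{prop:regularTransitiveW} by establishing the pointwise identity
\[t(C_3,W) \;=\; \frac{1}{2} - 3\,t(TT_3,W)\]
for every tournamenton $W$. Once this identity is in hand, the inequality $t(TT_3,W)\geq 1/8$ from Proposition~\ref{prop:regularTransitiveW} immediately gives $t(C_3,W)\leq 1/8$, and the equality characterization is inherited: equality in $t(C_3,W) \leq 1/8$ holds iff $t(TT_3,W) = 1/8$, iff $W$ is regular.

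To obtain the identity, I would start from
\[t(C_3,W) = \int_0^1\int_0^1\int_0^1 W(x,y)\,W(y,z)\,W(z,x)\,dx\,dy\,dz\]
and substitute $W(z,x) = 1 - W(x,z)$, which is legitimate by the defining property of a tournamenton. This splits the integral into two pieces. The piece coming from $-W(x,z)$ is $-\int W(x,y)W(y,z)W(x,z)\,dx\,dy\,dz$, which, after reordering factors, is precisely $-t(TT_3,W)$. The other piece, $\int W(x,y)\,W(y,z)\,dx\,dy\,dz$, factors by Fubini as $\int_0^1 d_W^-(y)\,d_W^+(y)\,dy$. Using the tournamenton identity $d_W^-(y) + d_W^+(y) = 1$, this rewrites as $\int_0^1 d_W^+(y)\,dy - \int_0^1 (d_W^+(y))^2\,dy$, which equals $\tfrac{1}{2} - 2\,t(TT_3,W)$ by the computation \eqref{eq:outdeg} that appeared in the proof of Proposition~\ref{prop:regularTransitiveW}.

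Assembling the two pieces gives $t(C_3,W) = \bigl(\tfrac{1}{2} - 2\,t(TT_3,W)\bigr) - t(TT_3,W) = \tfrac{1}{2} - 3\,t(TT_3,W)$, which is the identity I wanted. There is no serious obstacle: the entire argument is a short algebraic manipulation hinging on the tournamenton identity $W(x,y) + W(y,x) = 1$ together with the output of Proposition~\ref{prop:regularTransitiveW}. If anything, the only thing to be careful about is the bookkeeping when splitting the cyclic integrand, to make sure the sign and the factor of $3$ come out correctly so that the upper bound $1/8$ for $t(C_3,W)$ matches the lower bound $1/8$ for $t(TT_3,W)$ under the involution $s \mapsto \tfrac{1}{2} - 3s$ around the fixed point $s = 1/8$.
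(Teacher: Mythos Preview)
Your proposal is correct and follows essentially the same approach as the paper: both derive the identity $t(C_3,W)=\tfrac{1}{2}-3\,t(TT_3,W)$ from the tournamenton relation together with \eqref{eq:outdeg}, and then invoke Proposition~\ref{prop:regularTransitiveW}. The only cosmetic difference is that the paper obtains the identity by expanding $\int_0^1(d_W^+(x)+d_W^-(x))^2\,dx=1$ and identifying each term, whereas you split the $C_3$ integrand directly via $W(z,x)=1-W(x,z)$; both routes are equally short and rely on the same ingredients.
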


\begin{proof}
For almost every $x\in [0,1]$, we have $d_W^+(x)+d_W^-(x)=1$. Thus,
\[1=\int_0^1 \left(d_W^+(x)+d_W^-(x)\right)^2dx = \int_0^1 \left(d_W^+(x)\right)^2dx +2\int_0^1d_W^+(x)d_W^-(x) +\int_0^1 \left(d_W^-(x)\right)^2dx.\]
By \eqref{eq:outdeg}, the first term in the rightmost expression above is $2\cdot t(TT_3,W)$. A similar argument to the proof of \eqref{eq:outdeg} implies that the last term is $2\cdot t(TT_3,W)$ as well, and that the middle term is $2t(C_3,W)+2t(TT_3,W)$. 
Therefore, every tournamenton $W$ satisfies
\begin{equation}\label{eq:T3C3}2t(C_3,W) + 6t(TT_3,W)=1\end{equation}
or, in other words,
\[t(C_3,W)=\frac{1}{2}-3t(TT_3,W).\]
The result now follows by Proposition~\ref{prop:regularTransitiveW}.
\end{proof}

\begin{rem}
\label{rem:degreeCount}
By following an argument similar to the proof of Proposition~\ref{prop:regularCyclicW}, but for a finite tournament $T$ rather than a tournamenton $W$, one gets
\[v(T)(v(T)-1)^2=\sum_{v\in V(T)}\left(d_T^+(v)+d_T^-(v)\right)^2 = 2\hom(C_3,T) + 6\hom(TT_3,T) + 2\hom(TT_2,T).\]
\end{rem}

One can obtain the following analogues of the previous propositions for nearly regular sequences of tournaments by following essentially the same proofs.

\begin{prop}
\label{prop:regularTransitiveT}
Every sequence $(T_n)_{n\in \mathbb{N}}$ of tournaments with $v(T_n)\to\infty$ satisfies 
\[\liminf_{n\to\infty}t(TT_3,T_n)\geq 1/8.\]
Moreover, $(T_n)_{n\in \mathbb{N}}$ is nearly regular if and only if $\lim_{n\to\infty}t(TT_3,T_n)= 1/8$.
\end{prop}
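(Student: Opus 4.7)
My plan is to mirror the proof of Proposition~\ref{prop:regularTransitiveW}, but with integrals replaced by sums over the vertex set and with careful bookkeeping of the "diagonal" terms that have no analogue in the continuous setting. The first step is to derive the finite analogue of equation~\eqref{eq:outdeg}, namely
\[2\hom(TT_3,T)=\sum_{v\in V(T)}\left(d_T^+(v)\right)^2-\binom{n}{2}\]
for every tournament $T$ on $n$ vertices. This comes from writing $\hom(TT_3,T)=\sum_{x,y,z}[xy][xz][yz]$ and symmetrizing it with $\sum_{x,y,z}[xy][xz][zy]$ (both of which equal $\hom(TT_3,T)$, by the same relabelling argument used in the proof of Proposition~\ref{prop:regularTransitiveW}). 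Summing and using $[yz]+[zy]=\mathbf{1}[y\ne z]$ collapses the expression to $\sum_{x}\sum_{y\ne z}[xy][xz]=\sum_x\big((d_T^+(x))^2-d_T^+(x)\big)$, and $\sum_x d_T^+(x)=\binom{n}{2}$.

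Second, I would apply Jensen/Cauchy--Schwarz in the form
\[\sum_v (d_T^+(v))^2\geq\frac{(\sum_v d_T^+(v))^2}{n}=\frac{n(n-1)^2}{4},\]
with the quantitative gap $\sigma^2(T):=\sum_v\bigl(d_T^+(v)-(n-1)/2\bigr)^2$. Substituting gives the clean identity
\[t(TT_3,T)=\frac{(n-1)^2}{8n^2}+\frac{\sigma^2(T)}{2n^3}-\frac{n-1}{4n^2}.\]
Since the first term tends to $1/8$ and the last term tends to $0$ as $n\to\infty$, and $\sigma^2(T_n)\geq 0$, this yields $\liminf_{n\to\infty}t(TT_3,T_n)\geq 1/8$ immediately. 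It also reduces the "moreover" claim to showing that $(T_n)$ is nearly regular if and only if $\sigma^2(T_n)/n^3\to 0$.

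For the forward direction of the equivalence, assume $(T_n)$ is nearly regular and fix $\varepsilon>0$. For large $n$, all but at most $\varepsilon v(T_n)$ vertices $v$ satisfy $|d_{T_n}^+(v)-(n-1)/2|\leq 2\varepsilon n$ (absorbing the $(n-1)/2$ shift), so these vertices contribute at most $n\cdot 4\varepsilon^2 n^2$ to $\sigma^2(T_n)$, while the exceptional vertices contribute at most $\varepsilon n\cdot(n-1)^2\leq \varepsilon n^3$. This gives $\sigma^2(T_n)/n^3\leq 4\varepsilon^2+\varepsilon$, and letting $\varepsilon\to 0$ gives the claim. For the reverse direction, suppose $(T_n)$ is not nearly regular. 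Then there exist $\varepsilon_0>0$ and a subsequence along which at least $\varepsilon_0 n$ vertices $v$ satisfy $|d^+(v)-n/2|>\varepsilon_0 n$, which in turn gives $|d^+(v)-(n-1)/2|\geq \varepsilon_0 n - 1/2$ for these vertices and hence $\sigma^2(T_n)\geq \varepsilon_0^3 n^3/2$ for large $n$ in the subsequence; this prevents $\sigma^2(T_n)/n^3$ (and thus $t(TT_3,T_n)$) from tending to its extremal value.

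The proof is essentially routine once the finite identity is nailed down; there is no significant obstacle beyond being careful with lower-order terms such as $\binom{n}{2}$ versus $n^2/2$ and the shift between $n/2$ and $(n-1)/2$, which is necessary because in a finite tournament the average out-degree is $(n-1)/2$ rather than $n/2$. These discrepancies are harmless asymptotically but need to be tracked so that the asymptotic constants line up cleanly with the tournamenton calculation.
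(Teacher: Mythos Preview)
Your proposal is correct and follows essentially the same approach the paper indicates: the paper does not give a separate proof but simply states that Proposition~\ref{prop:regularTransitiveT} follows ``by following essentially the same proofs'' as Proposition~\ref{prop:regularTransitiveW}. You have carried out exactly this plan, supplying the finite analogue of~\eqref{eq:outdeg} with the necessary $\binom{n}{2}$ correction, applying Cauchy--Schwarz in place of Jensen, and then translating the condition $\sigma^2(T_n)=o(v(T_n)^3)$ into the definition of near regularity; the only cosmetic point is that you tacitly write $n$ for $v(T_n)$, but this causes no difficulty.
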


\begin{prop}
\label{prop:regularCyclicT}
Every sequence $(T_n)_{n\in \mathbb{N}}$ of tournaments with $v(T_n)\to\infty$ satisfies 
\[\limsup_{n\to\infty}t(C_3,T_n)\leq 1/8.\] 
Moreover, $(T_n)_{n\in \mathbb{N}}$ is nearly regular if and only if $\lim_{n\to\infty}t(C_3,T_n)= 1/8$.
\end{prop}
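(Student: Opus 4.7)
The plan is to mirror the tournamenton proof of Proposition~\ref{prop:regularCyclicW} at the finite-tournament level, using the degree identity in Remark~\ref{rem:degreeCount} in place of the identity $2t(C_3,W) + 6t(TT_3,W) = 1$ used there. The only new ingredient is to verify that the extra $2\hom(TT_2,T)$ term appearing in Remark~\ref{rem:degreeCount} contributes only an $o(1)$ error after normalization, so that the same relation between densities holds in the limit.

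Concretely, I would divide the identity
$$v(T)(v(T)-1)^2 = 2\hom(C_3,T) + 6\hom(TT_3,T) + 2\hom(TT_2,T)$$
through by $v(T)^3$, using that $\hom(H,T)/v(T)^{v(H)} = t(H,T)$ and $\hom(TT_2,T) = \binom{v(T)}{2}$. The normalized left-hand side is $(v(T)-1)^2/v(T)^2 = 1 - o(1)$, while the $TT_2$ contribution becomes $2\hom(TT_2,T)/v(T)^3 = (v(T)-1)/v(T)^2 = O(1/v(T))$. Applied to $T = T_n$ and passed to the limit, this yields the asymptotic analogue of equation~\eqref{eq:T3C3}:
$$t(C_3, T_n) = \frac{1}{2} - 3t(TT_3, T_n) + o(1).$$

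From here both parts of the proposition follow directly from Proposition~\ref{prop:regularTransitiveT}. Its first assertion $\liminf_n t(TT_3,T_n) \geq 1/8$ gives $\limsup_n t(C_3,T_n) \leq 1/2 - 3/8 = 1/8$, while its second assertion, combined with the asymptotic identity above, shows that $\lim_n t(C_3,T_n) = 1/8$ holds precisely when $\lim_n t(TT_3,T_n) = 1/8$, which in turn is equivalent to $(T_n)_{n\in\mathbb{N}}$ being nearly regular. There is no real obstacle here; the only subtlety is the routine observation that the $TT_2$ contribution is a lower-order term that vanishes in the limit.
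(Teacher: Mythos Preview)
Your proposal is correct and follows exactly the route the paper indicates: it mirrors the tournamenton argument by using the finite-tournament identity of Remark~\ref{rem:degreeCount} (the analogue of \eqref{eq:T3C3}), checks that the extra $\hom(TT_2,T)$ term is lower order, and then appeals to Proposition~\ref{prop:regularTransitiveT}. There is nothing to add.
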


Using the previous two propositions, we derive the following characterizations of sets of formal linear combinations of tournaments which force quasirandomness in regular tournaments.

\begin{prop}
\label{prop:3vertex}
Let $S\subseteq\mathbb{R}[\mathcal{T}]$. The following are equivalent:
\begin{itemize}
    \item $S$ forces quasirandomness in regular tournaments,
    \item $S\cup\{TT_3\}$ forces quasirandomness,
    \item $S\cup\{C_3\}$ forces quasirandomness.
\end{itemize}
\end{prop}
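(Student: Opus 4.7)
The plan is to exploit two simple observations: first, $t(TT_3,1/2) = t(C_3,1/2) = (1/2)^{\binom{3}{2}} = 1/8$; and second, by Propositions~\ref{prop:regularTransitiveT} and~\ref{prop:regularCyclicT}, for any sequence $(T_n)_{n\in\mathbb{N}}$ with $v(T_n)\to\infty$, near-regularity is equivalent to $\lim_{n\to\infty}t(TT_3,T_n) = 1/8$, and also equivalent to $\lim_{n\to\infty}t(C_3,T_n) = 1/8$. Thus the single density condition for $TT_3$ (or for $C_3$) precisely encodes the near-regularity hypothesis, and the proposition should follow by routine bookkeeping.

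To show that $S$ forcing quasirandomness in regular tournaments implies that $S\cup\{TT_3\}$ forces quasirandomness, I would take an arbitrary sequence $(T_n)_{n\in\mathbb{N}}$ with $v(T_n)\to\infty$ and $\lim_{n\to\infty}t(H,T_n) = t(H,1/2)$ for every $H\in S\cup\{TT_3\}$. The condition for $TT_3$, combined with Proposition~\ref{prop:regularTransitiveT}, yields that $(T_n)_{n\in\mathbb{N}}$ is nearly regular, after which the hypothesis on $S$ gives the desired quasirandomness.

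For the converse, assume that $S\cup\{TT_3\}$ forces quasirandomness, and let $(T_n)_{n\in\mathbb{N}}$ be a nearly regular sequence with $\lim_{n\to\infty}t(H,T_n) = t(H,1/2)$ for all $H\in S$. Near-regularity already forces $v(T_n)\to\infty$ (as noted immediately after the definition of nearly regular sequences), and Proposition~\ref{prop:regularTransitiveT} supplies the missing identity $\lim_{n\to\infty}t(TT_3,T_n) = 1/8 = t(TT_3,1/2)$. Hence the hypothesis that $S\cup\{TT_3\}$ forces quasirandomness is triggered and $(T_n)_{n\in\mathbb{N}}$ is quasirandom.

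The equivalence involving $S\cup\{C_3\}$ is then obtained by the identical two-direction argument, using Proposition~\ref{prop:regularCyclicT} in place of Proposition~\ref{prop:regularTransitiveT}. There is no genuine obstacle here; the work has already been done in establishing the tight characterization of near-regularity via either the $TT_3$-density or the $C_3$-density, and this proposition is essentially a formal consequence.
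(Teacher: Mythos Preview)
Your proposal is correct and follows essentially the same approach as the paper's proof: both directions are handled by invoking Proposition~\ref{prop:regularTransitiveT} to translate between near-regularity and the condition $\lim_{n\to\infty} t(TT_3,T_n)=1/8$, with the $C_3$ case handled identically via Proposition~\ref{prop:regularCyclicT}. The paper's argument is virtually identical to yours.
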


\begin{proof}
We will only prove that the first statement is equivalent to the second; the analogous result with $TT_3$ replaced by $C_3$ follows from the same arguments except that we apply Proposition~\ref{prop:regularCyclicT} instead of Proposition~\ref{prop:regularTransitiveT}. 

Suppose that $S$ forces quasirandomness in regular tournaments. Let $(T_n)_{n\in \mathbb{N}}$ be a sequence of tournaments with $v(T_n)\to\infty$ and $\lim_{n\to\infty}t(TT_3,T_n)=1/8$. Then $(T_n)_{n\in \mathbb{N}}$ is nearly regular by Proposition~\ref{prop:regularTransitiveT}. So, if we additionally assume that $\lim_{n\to\infty}t(H,T_n)=t(H,1/2)$ for all $H\in S$, then, since $S$ forces quasirandomness in regular tournaments, we get that $(T_n)_{n\in \mathbb{N}}$ is quasirandom. Conversely, assume that $S\cup\{TT_3\}$ forces quasirandomness. Suppose that $(T_n)_{n\in \mathbb{N}}$ is a nearly regular sequence of tournaments such that $\lim_{n\to\infty}t(H,T_n)=t(H,1/2)$ for all $H\in S$. Since the sequence is nearly regular, Proposition~\ref{prop:regularTransitiveT} tells us that $\lim_{n\to\infty}t(TT_3,T_n)=1/8$. Therefore, since $S\cup\{TT_3\}$ forces quasirandomness, we have that $(T_n)_{n\in \mathbb{N}}$ is quasirandom. 
\end{proof}

We now get a sufficient condition for forcing quasirandomness in regular tournaments which we will use in the flag algebra proofs of Section~\ref{sec:flags}. 

\begin{lem}
\label{lem:linear}
Let $H$ be a tournament. If there exists $\alpha,\beta\in \mathbb{R}$ such that the formal linear combination $\alpha TT_3 + \beta H$ forces quasirandomness, then $H$ forces quasirandomness in regular tournaments. Similarly, if there exists $\alpha,\beta\in \mathbb{R}$ such that the formal linear combination $\alpha C_3 + \beta H$ forces quasirandomness, then $H$ forces quasirandomness in regular tournaments.
\end{lem}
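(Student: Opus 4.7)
The plan is to deduce this directly from Proposition~\ref{prop:3vertex} together with the linearity of the map $H'\mapsto t(H', T_n)$. By the equivalence stated in Proposition~\ref{prop:3vertex}, showing that $\{H\}$ forces quasirandomness in regular tournaments is equivalent to showing that $\{H, TT_3\}$ forces quasirandomness (in the general sense); similarly, it is equivalent to showing that $\{H, C_3\}$ forces quasirandomness. So the strategy is to fold the extra information from $TT_3$ (or $C_3$) into the hypothesis that $\alpha TT_3 + \beta H$ forces quasirandomness, and apply that hypothesis.

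Concretely, suppose $\alpha TT_3 + \beta H$ forces quasirandomness and let $(T_n)_{n\in\mathbb{N}}$ be any sequence of tournaments with $v(T_n)\to\infty$ such that
\[\lim_{n\to\infty}t(H,T_n) = t(H,1/2) \quad\text{and}\quad \lim_{n\to\infty}t(TT_3,T_n) = 1/8 = t(TT_3,1/2).\]
Using the definition $t(\alpha TT_3 + \beta H, T_n) = \alpha \cdot t(TT_3,T_n) + \beta \cdot t(H,T_n)$, a direct limit computation shows that
\[\lim_{n\to\infty}t(\alpha TT_3 + \beta H, T_n) = \alpha/8 + \beta\cdot(1/2)^{\binom{v(H)}{2}} = t(\alpha TT_3 + \beta H, 1/2).\]
Since the formal linear combination $\alpha TT_3 + \beta H$ is assumed to force quasirandomness, the sequence $(T_n)_{n\in\mathbb{N}}$ is quasirandom. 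This shows that $\{H, TT_3\}$ forces quasirandomness, and hence, by Proposition~\ref{prop:3vertex}, that $H$ forces quasirandomness in regular tournaments. The second half of the lemma, involving $C_3$ in place of $TT_3$, follows by the identical argument using $t(C_3,1/2) = 1/8$ and the corresponding equivalence in Proposition~\ref{prop:3vertex}.

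There isn't a genuine obstacle here; the lemma is essentially a bookkeeping consequence of the fact that homomorphism densities are linear in the ``template'' argument, combined with the fact that near-regularity of a sequence $(T_n)_{n\in\mathbb{N}}$ is detected by $t(TT_3,T_n)\to 1/8$ (equivalently, $t(C_3,T_n)\to 1/8$). The only point worth writing carefully is noting that the constant $\alpha/8 + \beta\cdot(1/2)^{\binom{v(H)}{2}}$ obtained as the limit of $t(\alpha TT_3 + \beta H, T_n)$ matches $t(\alpha TT_3 + \beta H, 1/2)$ as defined in Section~\ref{sec:prelims}, so that the hypothesis on $\alpha TT_3 + \beta H$ is applicable.
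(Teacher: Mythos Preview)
Your proposal is correct and follows essentially the same argument as the paper: both prove that $\{H,TT_3\}$ (respectively $\{H,C_3\}$) forces quasirandomness by using linearity of $t(\cdot,T_n)$ to pass from the two individual limits to the limit for $\alpha TT_3+\beta H$, and then invoke Proposition~\ref{prop:3vertex}. Your write-up is slightly more explicit about the constant $\alpha/8+\beta(1/2)^{\binom{v(H)}{2}}$ matching $t(\alpha TT_3+\beta H,1/2)$, but the underlying idea is identical.
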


\begin{proof}
Suppose that $\alpha TT_3+\beta H$ forces quasirandomness. Then the set $\{TT_3,H\}$ forces quasirandomness, since any sequence $(T_n)_{n\in \mathbb{N}}$ satisfying $\lim_{n\to\infty}t(TT_3,T_n)=1/8$ and $\lim_{n\to\infty}t(H,T_n)=t(H,1/2)$ must satisfy $\lim_{n\to\infty}t(\alpha TT_3+\beta H,T_n) = t(\alpha TT_3+\beta H,1/2)$ as well. So, by Proposition~\ref{prop:3vertex}, $H$ forces quasirandomness in regular tournaments. 
\end{proof}

We close this section with a simple necessary condition for forcing quasirandomness in regular tournaments that will be used in Section~\ref{sec:constructions}. 

\begin{lem}
\label{lem:IVT}
Let $H\in \mathbb{R}[\mathcal{T}]$. If there exist regular tournamentons $W_0$ and $W_1$ such that 
\[t(H,W_0) < t(H,1/2),\text{ and}\]
\[t(H,W_1) > t(H,1/2),\]
then $H$ does not force quasirandomness in regular tournaments. 
\end{lem}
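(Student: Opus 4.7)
The strategy is an intermediate value argument. I would aim to construct a regular tournamenton $W^*$ which is not equal to $1/2$ almost everywhere yet still satisfies $t(H,W^*) = t(H,1/2)$. Given such a $W^*$, Proposition~\ref{prop:limitsExist} provides a sequence $(T_n)_{n\in\mathbb{N}}$ of $W^*$-random tournaments with $v(T_n)\to\infty$ that converges to $W^*$ almost surely, and a routine Chernoff bound (using that $d_{W^*}^+(x) = 1/2$ for almost every $x$) ensures that this sequence is nearly regular almost surely. This sequence then satisfies $\lim_{n\to\infty} t(H,T_n) = t(H,W^*) = t(H,1/2)$, but it is not quasirandom: since $\{TT_4\}$ forces quasirandomness by Theorem~\ref{th:qriff}, Proposition~\ref{prop:1/2everywhere} guarantees that $t(TT_4,W^*)\neq t(TT_4,1/2)$, and hence $\lim_{n\to\infty} t(TT_4,T_n) \neq t(TT_4,1/2)$.

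To build $W^*$, I would interpolate between $W_0$ and $W_1$ by \emph{concatenation}. For each $\alpha\in[0,1]$, define a tournamenton $W_\alpha$ by placing a rescaled copy of $W_1$ on $[0,\alpha]^2$, a rescaled copy of $W_0$ on $[\alpha,1]^2$, and setting $W_\alpha(x,y)=1/2$ on the off-diagonal blocks $[0,\alpha]\times[\alpha,1]$ and $[\alpha,1]\times[0,\alpha]$. A direct check confirms that $W_\alpha(x,y)+W_\alpha(y,x)=1$ for almost every $(x,y)$, and, since $W_0$ and $W_1$ are regular, that $d_{W_\alpha}^+(x) = 1/2$ for almost every $x$; so $W_\alpha$ is itself a regular tournamenton. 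Expanding $t(H,W_\alpha)$ by partitioning each integration variable's domain as $[0,\alpha]\cup[\alpha,1]$ and applying the obvious changes of variables shows that $\alpha\mapsto t(H,W_\alpha)$ is a polynomial in $\alpha$, and in particular continuous. Since $W_\alpha$ agrees with $W_0$ up to a measure-zero set at $\alpha=0$ and with $W_1$ at $\alpha=1$, we have $t(H,W_0) < t(H,1/2) < t(H,W_1)$, and the Intermediate Value Theorem produces some $\alpha^*\in(0,1)$ with $t(H,W_{\alpha^*})=t(H,1/2)$. I then take $W^* := W_{\alpha^*}$.

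It remains to verify that $W^*$ is not equal to $1/2$ almost everywhere. On the positive-measure square $[\alpha^*,1]^2$, $W^*$ is a rescaled copy of $W_0$, and $W_0$ cannot be $1/2$ almost everywhere, since otherwise $t(H,W_0)=t(H,1/2)$, contradicting the hypothesis. The main subtlety — which motivates the concatenation construction instead of the naive convex combination $(1-\alpha)W_0 + \alpha W_1$ — is that a convex combination can happen to equal $1/2$ almost everywhere at precisely the parameter selected by the IVT; for instance, if $W_1 = 1 - W_0$, the midpoint $(W_0+W_1)/2$ is identically $1/2$. The concatenation sidesteps this issue because for any $\alpha^* \in (0,1)$ the rescaled $W_0$ block on $[\alpha^*,1]^2$ forces $W^*$ to be nontrivial on a set of positive measure.
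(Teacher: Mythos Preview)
Your proposal is correct and follows essentially the same approach as the paper: both use the identical concatenation construction (a rescaled copy of $W_1$ on one diagonal block, $W_0$ on the other, and $1/2$ elsewhere), invoke the Intermediate Value Theorem to locate a regular tournamenton with the target density, and rule out the possibility that it equals $1/2$ almost everywhere via one of the diagonal blocks. The only cosmetic difference is in the wrap-up: the paper appeals to Proposition~\ref{prop:3vertex} (equivalently, $\{H,C_3\}$ does not force quasirandomness), whereas you pass directly to a $W^*$-random sequence and invoke Theorem~\ref{th:qriff} for $TT_4$; both routes are valid.
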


\begin{proof}
For each $z\in (0,1)$, let $W_z$ be the tournamenton defined by
\[W_z(x,y)=\begin{cases}
W_1(x/z,y/z) & \text{if }0\leq x,y\leq z,\\
W_0((x-z)/(1-z), (y-z)/(1-z)) & \text{if }z< x,y\leq 1,\\
1/2 & \text{otherwise}.
\end{cases}\]
An illustration of the structure of this graphon in the case $z=1/4$ is given below. Note that, in all pictures of tournamentons, the origin of $[0,1]^2$ is in the top left corner, in analogy with adjacency matrices. 

\begin{center}
\begin{tikzpicture}
    \draw[thick] (0,0) rectangle (4,4);
    
    \draw[thick] (1,0) -- (1,4);
    
    \draw[thick] (0,3) -- (4,3);
    
    \node at (0.5, 3.5) {\(W_1\)};          
    \node at (2.5, 3.5) {\(\frac{1}{2}\)};  
    \node at (0.5, 1.5) {\(\frac{1}{2}\)};  
    \node at (2.5, 1.5) {\(W_0\)};          

\end{tikzpicture}
\end{center}

Then $t(H,W_z)$ is a continuous function of the variable $z$ for $z\in[0,1]$ and satisfies $t(H,W_0)<t(H,1/2)$ and $t(H,W_1)>t(H,1/2)$. So, by the Intermediate Value Theorem, there must exist $z\in(0,1)$ such that $t(H,W_z)=t(H,1/2)$. It is easily observed that $W_z$ is regular for all $z\in (0,1)$ because $W_0$ and $W_1$ are regular.

Next, we claim that it is not the case that $W_z(x,y)=1/2$ 
for almost all $(x,y)\in[0,z]^2$. Indeed, if it were the case, then, since $z>0$, we would have that $W_1(x,y)=1/2$ for almost all $(x,y)\in[0,1]^2$ as well. However, this would imply $t(H,W_1)=t(H,1/2)$, which contradicts the assumption of the lemma. So, we have $W_z(x,y)\neq 1/2$ for a positive measure of points $(x,y)\in[0,z]^2$. Thus, by Proposition~\ref{prop:1/2everywhere} and Proposition~\ref{prop:regularCyclicW},
the pair $\{H,C_3\}$ does not force quasirandomness which, by Proposition~\ref{prop:3vertex}, implies that $H$ does not force quasirandomness in regular tournaments. 
\end{proof}

\section{Negative Results}
\label{sec:constructions}

In this section, we show that $H_0,H_1,H_2,H_3,H_{12},H_{18}$ and $H_{19}$ do not force quasirandomness in regular tournaments. The following class of tournamentons will be useful for this purpose. Given a finite tournament $T$ with a set of vertex $\{1,\dots,k\}$, one can define a corresponding tournament $W_T$ by dividing $[0,1]$ into intervals $I_1,\dots,I_k$ of equal measure and defining
\[W_T(x,y):=\begin{cases}1 & \text{if }x\in I_i\times I_j\text{ for some }ij\in A(T),\\
1/2 & \text{if }x\in I_i\times I_i\text{ for some }i\in V(T),\\ 0&\text{otherwise}.\end{cases}\]
Say that a tournament $T$ is \emph{regular} if every vertex of $T$ has out-degree $\frac{v(T)-1}{2}$. 

\begin{obs}
Let $T$ be a tournament. Then $T$ is regular if and only if $W_T$ is regular.
\end{obs}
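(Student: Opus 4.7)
The plan is to verify the observation by a direct computation of the out-degree function of $W_T$ in terms of the out-degrees of $T$. Let $T$ have vertex set $\{1,\dots,k\}$ and let $I_1,\dots,I_k$ be the intervals of length $1/k$ used in the definition of $W_T$.

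First I would compute $d_{W_T}^+(x)$ for an arbitrary $x \in I_i$. Splitting the integral over the partition $I_1,\dots,I_k$, one gets
\[
d_{W_T}^+(x) \;=\; \int_0^1 W_T(x,y)\,dy \;=\; \sum_{j=1}^{k} \int_{I_j} W_T(x,y)\,dy.
\]
By the definition of $W_T$, for $j\neq i$ this integral equals $1/k$ if $ij\in A(T)$ and $0$ otherwise, while for $j=i$ it equals $(1/2)\cdot(1/k)$. Hence
\[
d_{W_T}^+(x) \;=\; \frac{d_T^+(i)}{k} + \frac{1}{2k}, \qquad x\in I_i.
\]
In particular $d_{W_T}^+$ is constant on each $I_i$, with value depending only on $d_T^+(i)$.

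Next I would observe that $d_{W_T}^+(x) = 1/2$ for $x\in I_i$ is equivalent to $d_T^+(i) = (k-1)/2$. So if $T$ is regular (i.e.\ every $d_T^+(i) = (k-1)/2$), then $d_{W_T}^+(x) = 1/2$ for every $x \in [0,1]$, and $W_T$ is regular. Conversely, if $W_T$ is regular, then for each $i$ the set of $x\in I_i$ where $d_{W_T}^+(x) \neq 1/2$ has measure zero; but $d_{W_T}^+$ is constant on $I_i$ and $I_i$ has positive measure $1/k$, so this constant value must equal $1/2$, forcing $d_T^+(i) = (k-1)/2$ for every $i$, i.e.\ $T$ is regular.

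There is no real obstacle here; the whole thing is an unpacking of definitions. The only minor care needed is in the ``only if'' direction, where one uses that the exceptional set of $x$ where $d_{W_T}^+(x)\neq 1/2$ has measure zero, together with the fact that $d_{W_T}^+$ is genuinely constant on each positive-measure interval $I_i$, to conclude vertex-by-vertex regularity of $T$.
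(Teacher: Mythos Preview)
Your proof is correct; the paper states this as an observation without proof, and your direct computation of $d_{W_T}^+(x) = d_T^+(i)/k + 1/(2k)$ for $x\in I_i$ is exactly the natural verification one has in mind.
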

We start with the ``easy cases'' $H_0,H_1,H_2$ and $H_3$.

\begin{prop}
\label{prop:alreadyKnownNeg}
The tournaments $H_0,H_1,H_2$ and $H_3$ do not force quasirandomness in regular tournaments. 
\end{prop}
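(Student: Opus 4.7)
The plan is to exhibit a single nearly regular, non-quasirandom sequence $(T_n)_{n\in\mathbb{N}}$ that, for each $i\in\{0,1,2,3\}$, automatically satisfies the forcing condition $\lim_{n\to\infty} t(H_i,T_n) = (1/2)^{\binom{v(H_i)}{2}}$. Such a sequence then simultaneously certifies that none of $H_0,H_1,H_2,H_3$ forces quasirandomness in regular tournaments.

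A convenient candidate is a sequence of $W_{C_3}$-random tournaments with $v(T_n)\to\infty$, where $W_{C_3}$ is the blow-up tournamenton just defined. Since $C_3$ is a regular tournament, the Observation above yields that $W_{C_3}$ is regular, i.e., $d^+_{W_{C_3}}\equiv 1/2$. By Proposition~\ref{prop:limitsExist}, with probability one $(T_n)_{n\in\mathbb{N}}$ converges to $W_{C_3}$; in particular, $t(C_3,T_n)\to t(C_3,W_{C_3})=1/8$ (the equality coming from Proposition~\ref{prop:regularCyclicW}), and then Proposition~\ref{prop:regularCyclicT} implies that $(T_n)_{n\in\mathbb{N}}$ is nearly regular. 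On the other hand, $W_{C_3}$ is far from the constant $1/2$ (taking the values $0$ and $1$ on positive-measure sets), so Proposition~\ref{prop:1/2everywhere} certifies that $(T_n)_{n\in\mathbb{N}}$ is not quasirandom.

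To finish, one verifies the four forcing conditions. For $H_0=TT_1$ the condition $t(H_0,T_n)\to 1$ holds trivially for every tournament sequence. For $H_1=TT_2$ one has $t(TT_2,T_n)=\binom{v(T_n)}{2}/v(T_n)^2\to 1/2$ for any sequence with $v(T_n)\to\infty$. For $H_2=TT_3$ and $H_3=C_3$, the targets are $1/8=(1/2)^{\binom{3}{2}}$, and these follow from near-regularity of $(T_n)_{n\in\mathbb{N}}$ combined with Propositions~\ref{prop:regularTransitiveT} and~\ref{prop:regularCyclicT} respectively.

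There is no serious obstacle in this argument; the conceptual core is noticing that near-regularity by itself already forces the density targets for $TT_3$ and $C_3$, while the targets for $H_0$ and $H_1$ are vacuous on any sequence with $v(T_n)\to\infty$. Consequently, a single non-quasirandom regular tournamenton such as $W_{C_3}$ handles all four cases in parallel, and one does not need to produce separate constructions for the different $H_i$.
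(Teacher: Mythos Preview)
Your proof is correct and follows essentially the same approach as the paper: both use the regular tournamenton $W_{C_3}$ and observe that the density targets for $H_0,H_1$ are trivially met while those for $H_2=TT_3$ and $H_3=C_3$ follow automatically from regularity via Propositions~\ref{prop:regularTransitiveW}--\ref{prop:regularCyclicT}. The only stylistic difference is that the paper works directly at the tournamenton level (invoking Propositions~\ref{prop:1/2everywhere} and~\ref{prop:3vertex}), whereas you pass to an explicit $W_{C_3}$-random sequence and verify the definition of forcing in regular tournaments directly; both routes are valid and equally short.
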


\begin{proof}
Consider any regular tournamenton $W$ such that there is a subset of $[0,1]^2$ of positive measure on which $W\neq 1/2$. For example, one could take $W=W_T$ for any regular tournament $T$; e.g., $T=C_3$ would work. 
We have $t(H_0,W)=1$ and $t(H_1,W)=1/2$, as these equations hold for every tournamenton. Since $H_2=TT_3$ and $H_3=C_3$, we have $t(H_2,W)=t(H_3,W)=1/8$ by Propositions~\ref{prop:regularTransitiveW} and~\ref{prop:regularCyclicW}. So, by Propositons~\ref{prop:1/2everywhere} and~\ref{prop:3vertex}, none of these tournaments force quasirandomness in regular tournaments. 
\end{proof}

Next, we consider the tournaments $H_9$ and $H_{16}$. 

\begin{prop}
\label{prop:special}
The tournaments $H_9$ and $H_{16}$ do not force quasirandomness in regular tournaments.\footnote{The example in the proof of this proposition was pointed out to us by Bernard Lidick\'y, Daniel Kr\'a\v{l}, Florian Pfender and Jan Volec after posting an earlier draft of this paper to arxiv; see the acknowledgements at the end of the paper.} 
\end{prop}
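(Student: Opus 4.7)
The plan is to apply Lemma~\ref{lem:IVT} to each of $H_9$ and $H_{16}$. Since $t(H,1/2) = (1/2)^{\binom{5}{2}} = (1/2)^{10}$ for any 5-vertex tournament $H$, it suffices to exhibit, for each $H\in\{H_9,H_{16}\}$, two regular tournamentons $W_0$ and $W_1$ with $t(H,W_0) < (1/2)^{10} < t(H,W_1)$. The observation preceding Proposition~\ref{prop:alreadyKnownNeg} gives us a ready supply of regular tournamentons, namely the block tournamentons $W_T$ for any regular finite tournament $T$.

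For any such $T$, the density $t(H,W_T)$ breaks up as a finite sum
\[t(H,W_T) = \frac{1}{v(T)^{5}}\sum_{\varphi\colon V(H)\to V(T)} \prod_{uv\in A(H)} w(\varphi(u),\varphi(v)),\]
where $w(i,j)$ equals $1$, $1/2$, or $0$ according to whether $ij\in A(T)$, $i=j$, or $ji\in A(T)$. I would first try the smallest and most symmetric candidates: $T=C_3$, the regular $5$-vertex tournament $H_{19}$, and the $7$-vertex Paley tournament. Vertex transitivity of these tournaments collapses the enumeration drastically, since one can fix the image of one vertex of $H$ and sum over the remaining four. For $T=C_3$ in particular, the computation depends only on the ordered partition of $V(H)$ into the three blocks, and is easily done by hand. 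The hope is that among these handful of candidates some will overshoot and others will undershoot $(1/2)^{10}$, furnishing the required $W_0$ and $W_1$.

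Should the handful of block tournamentons above fail to straddle $(1/2)^{10}$, a complementary family of regular tournamentons is obtained by smooth perturbation of the constant $1/2$: let $W_\varepsilon(x,y) = 1/2 + \varepsilon\,g(x,y)$ where $g\colon[0,1]^2\to[-1/2,1/2]$ satisfies $g(x,y)=-g(y,x)$ and $\int_0^1 g(x,y)\,dy = 0$ for almost every $x$; the first condition makes $W_\varepsilon$ a tournamenton and the second makes it regular. A Taylor expansion of $t(H,W_\varepsilon)$ in $\varepsilon$ around $\varepsilon=0$ has leading term of definite sign depending on $g$ and $H$, so flipping the sign of $\varepsilon$ or swapping $g$ for a suitable other kernel gives densities on both sides of $(1/2)^{10}$ (provided the leading non-vanishing coefficient is nonzero, which can be checked by hand for each of $H_9$ and $H_{16}$).

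The main obstacle is simply the combinatorial bookkeeping: one needs to identify specific regular tournamentons and verify the strict inequalities explicitly. The acknowledgement in the footnote suggests that Lidick\'y, Kr\'a\v{l}, Pfender and Volec found an especially clean example (perhaps a single well-chosen $W_T$ together with its ``dual'' obtained by reversing all arcs of $T$, which produces densities symmetric about $(1/2)^{10}$ in a predictable way). Regardless of which concrete construction is used, Lemma~\ref{lem:IVT} supplies the final step once the two-sided comparison is established.
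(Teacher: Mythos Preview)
The paper does not use Lemma~\ref{lem:IVT} here. After noting that $H_9$ and $H_{16}$ are obtained from one another by reversing all arcs (so it suffices to treat $H_9$), it computes by hand that $t(H_9,W_{C_3})=2^{-10}$ \emph{exactly}. Since $W_{C_3}$ is regular but not equal to $1/2$ almost everywhere, Propositions~\ref{prop:1/2everywhere} and~\ref{prop:3vertex} then give immediately that $H_9$ does not force quasirandomness in regular tournaments. No intermediate-value step is needed: a single regular non-quasirandom tournamenton hitting the density on the nose is already sufficient.

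Your plan would in fact stumble precisely here. Your first candidate $T=C_3$ yields density exactly $(1/2)^{10}$, not a strict inequality on either side, so within the IVT framework you would discard it as furnishing neither $W_0$ nor $W_1$---whereas this exact hit \emph{is} the whole proof. Lemma~\ref{lem:IVT} is a device for manufacturing such a tournamenton when none is apparent; it is not a required step. Beyond this, your proposal remains a search strategy rather than a proof (no concrete inequalities are established), and the perturbation fallback has a flaw: the linear-in-$\varepsilon$ term of $t(H,W_\varepsilon)$ always vanishes by antisymmetry of $g$, and there is no reason the first nonvanishing term should have odd degree, so ``flipping the sign of $\varepsilon$'' need not move $t(H,W_\varepsilon)$ to the other side of $(1/2)^{10}$.
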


\begin{proof}
Note that $H_9$ and $H_{16}$ can be obtained from one another by reversing all arcs. Therefore, one of them forces quasirandomness in regular tournaments if and only if the other does. So, without loss of generality, we focus only on $H_9$. Let the five vertices of $H_9$ be denoted by $u_1,\dots,u_5$ taken in clockwise order starting with the top vertex in the picture of $H_9$ in Figure~\ref{fig:smallTourns}. Then $u_1$ is a sink, the only out-neighbour of $u_2$ is $u_1$ and the vertices $u_3,u_4,u_5$ form a cyclic triangle. 

The tournamenton $W_{C_3}$ is regular, and so we will be done if we can argue that $t(H_9,W_{C_3})=2^{-10}$. Let us compute this by hand. We take a $W_{C_3}$-random tournament on five vertices $v_1,\dots,v_5$ and compute the probability that $v_iv_j$ is an arc of this random tournament if and only if $u_iu_j$ is an arc of $H_9$. Let $I_1,I_2$ and $I_3$ be the three intervals used in the construction of $W_{C_3}$. First, if no two of the points $v_3,v_4,v_5$ are sampled from the same such interval, then the probability that $v_1$ is a sink is zero. Next, if $v_3,v_4,v_5$ are sampled from precisely two of the intervals $I_1,I_2,I_3$, then the probability that they form a cyclic triangle is zero. 

Finally, suppose that $v_1,v_2,v_3$ are sampled from the same such interval. There are three choices of interval and, for any choice, the probability of all three of these points landing in that interval is $(1/3)^3$. Given this, the probability that they have the correct arcs among them is $(1/2)^3$. Now, the only ways in which the $W_{C_3}$-random tournament can have the correct arcs with non-zero probability are if (a) $v_1$ and $v_2$ are both mapped to the same interval as $v_3,v_4,v_5$, (b) $v_2$ is mapped to the same interval as $v_3,v_4,v_5$ and $v_1$ is mapped to the next interval in cyclic order, or (c) both $v_1$ and $v_2$ are mapped to the next interval in cyclic order. The probabilities of each of these events, and the probability that the $W_{C_3}$-random tournament has the correct arcs given these events, can be computed fairly straightforwardly to give the following:
\[t(H_9,W_{C_3}) = 3\left(\frac{1}{3}\right)^3\left(\frac{1}{2}\right)^3\left(\frac{1}{3}\right)^2\left(\left(\frac{1}{2}\right)^7 + \left(\frac{1}{2}\right)^3+\left(\frac{1}{2}\right)^1\right)=2^{-10}\]
as desired. 
\end{proof}

We conclude the section by using Lemma~\ref{lem:IVT} to prove that $H_{12},H_{18}$ and $H_{19}$ do not force quasirandomness in regular tournaments.

\begin{lem}
\label{lem:negFive}
The tournaments $H_{12},H_{18}$ and $H_{19}$ do not force quasirandomness in regular tournaments. 
\end{lem}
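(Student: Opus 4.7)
The plan is to apply Lemma~\ref{lem:IVT} to each of $H_{12}, H_{18}, H_{19}$: for each such $H$ on $5$ vertices, since $t(H, 1/2) = 2^{-10}$, we seek regular tournamentons $W_0, W_1$ with $t(H, W_0) < 2^{-10} < t(H, W_1)$.

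My first-choice candidates are the blow-up tournamentons $W_{C_3}$ and $W_{H_{19}}$, both regular by the observation at the start of this section. For any tournament $T$ on $k$ vertices and any map $\phi: V(H) \to V(T)$, the corresponding summand of $k^{v(H)} \cdot t(H, W_T)$ vanishes unless every cross-class arc of $H$ respects the orientation of $T$, in which case it equals $(1/2)^{e(\phi)}$, where $e(\phi)$ counts arcs of $H$ both of whose endpoints lie in a single $\phi$-class. Thus each density calculation reduces to a finite enumeration.

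For $H = H_{19}$: since $H_{19}$ is the Cayley tournament on $\mathbb{Z}/5$ with connection set $\{1,2\}$, its five rotational automorphisms alone give $t(H_{19}, W_{H_{19}}) \geq 5/5^5 = 1/625 > 2^{-10}$, so $W_1 := W_{H_{19}}$ works. In the other direction, the $2$-out-regularity of $H_{19}$ forbids any nontrivial $3$-partition of $V(H_{19})$ whose cross-class arcs all respect a $3$-cycle orientation: any vertex $v$ in a class $A$ whose downstream class $B$ in the $C_3$-order is nonempty would need $|B|$ out-neighbours in $B$, forcing $|B| \leq 2$, and a short check rules out the resulting $(2,2,1)$-splits as well. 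Hence only the three constant maps contribute to $t(H_{19}, W_{C_3})$, yielding $t(H_{19}, W_{C_3}) = 3 \cdot 2^{-10}/3^5 = 2^{-10}/81 < 2^{-10}$, so $W_0 := W_{C_3}$ works.

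For $H \in \{H_{12}, H_{18}\}$: I will carry out the analogous finite enumeration. Both tournaments have out-degree sequence $(1,2,2,2,3)$, and the unique source-like and sink-like vertices of out-degree $1$ and $3$ drastically restrict the partitions $V(H) = A \sqcup B \sqcup C$ that contribute nontrivially to $t(H, W_{C_3})$, and similarly for maps $V(H) \to V(H_{19})$. After listing the surviving cases, I extract explicit rational values and verify that they straddle $2^{-10}$.

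The main obstacle is the case analysis for $H_{12}$ and $H_{18}$: it is bookkeeping-heavy, though made tractable by the tight degree constraints. Should $W_{C_3}$ and $W_{H_{19}}$ end up on the same side of $2^{-10}$ for one of these $H$'s, the fallback is to interpolate $W_T$ with the constant tournamenton $1/2$ along the lines of the proof of Lemma~\ref{lem:IVT}, or to use $W_T$ for a regular tournament $T$ on $7$ or more vertices.
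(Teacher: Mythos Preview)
Your overall strategy --- invoke Lemma~\ref{lem:IVT} by exhibiting two regular tournamentons whose $H$-densities straddle $2^{-10}$ --- is exactly the paper's. Your treatment of $H_{19}$ is correct: $W_{H_{19}}$ and $W_{C_3}$ do land on opposite sides of $2^{-10}$ (the paper instead uses the convex interpolation $U_z = \tfrac12 + z(2W_{C_3}-1)$ and picks $z=1/3$ and $z=1/2$, but your choice is just as good).

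The genuine gap is $H_{18}$. Your first-choice pair fails there: $H_{18}$ is strongly connected and admits no nontrivial map to either $C_3$ or $H_{19}$ compatible with the orientations (your own in-neighbour-closure reasoning, applied to $H_{18}$, shows the only sets with all cross-arcs outgoing are $\emptyset$ and $V$, and a short case check kills cyclic $3$-partitions and maps to $H_{19}$ as well). Hence only constant maps contribute, giving $t(H_{18},W_{C_3})=2^{-10}/81$ and $t(H_{18},W_{H_{19}})=2^{-10}/625$, both strictly below $2^{-10}$. Your first fallback does not rescue this: the paper computes $t(H_{18},U_z)=2^{-10}-z^4/96-z^6/108-7z^8/162$, which is monotone decreasing on $[0,1/2]$ and so never exceeds $2^{-10}$. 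What actually works --- and what the paper does --- is your \emph{second} fallback: they exhibit, via computer search, a specific $7$-vertex regular tournament $T$ with $t(H_{18},W_T)\approx 0.001250>2^{-10}$. So your plan ultimately converges to the paper's proof, but the step you are missing for $H_{18}$ is not a routine enumeration; it requires locating a nontrivial example.

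(For $H_{12}$, incidentally, your pair does work: $t(H_{12},W_{C_3})=43/27648>2^{-10}$, while a direct count gives $t(H_{12},W_{H_{19}})=77/128000<2^{-10}$. The paper instead uses $U_{1/2}$ and $U_{1/3}$.)
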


\begin{proof}
We will prove the lemma via three applications of Lemma~\ref{lem:IVT}. For each $z\in[0,1/2]$, let $U_z$ be the tournamenton defined by setting
\[U_z(x,y)=1/2 +z(2\cdot W_{C_3}(x,y) - 1)\]
for all $x,y\in[0,1]$. Visually, $U_z$ is the function obtained by dividing $[0,1]^2$ into a regular $3\times 3$ grid and assigning values to points in each square of the grid according to the following diagram.
\begin{center}
\begin{tikzpicture}[scale=1.5]
    \draw (0,0) grid (3,3);
    \node at (0.5, 0.5) {\(\frac{1}{2}+z\)};
    \node at (1.5, 0.5) {\(\frac{1}{2} - z\)};
    \node at (2.5, 0.5) {\(\frac{1}{2}\)};
    
    \node at (0.5, 1.5) {\(\frac{1}{2} - z\)};
    \node at (1.5, 1.5) {\(\frac{1}{2} \)};
    \node at (2.5, 1.5) {\(\frac{1}{2} + z\)};
    
    \node at (0.5, 2.5) {\(\frac{1}{2}\)};
    \node at (1.5, 2.5) {\(\frac{1}{2} + z\)};
    \node at (2.5, 2.5) {\(\frac{1}{2} - z\)};
\end{tikzpicture}
\end{center}

Note that $U_0$ is nothing more than the constant function equal to $1/2$ everywhere, whereas $U_{1/2}$ is $W_{C_3}$. Also note that $U_z$ is a regular tournamenton for all $z\in[0,1]$. It is tedious, but not hard, to show that
\[t(H_{12},U_z)=\frac{1}{1024}- \frac{1}{96}z^4+\frac{7}{108}z^6+ \frac{1}{18}z^8,\]
\[t(H_{18},U_z)=\frac{1}{1024}- \frac{1}{96}z^4-\frac{1}{108}z^6- \frac{7}{162}z^8,\text{ and}\]
\[t(H_{19},U_z) = \frac{1}{1024}+ \frac{5}{288}z^4-\frac{5}{36}z^6+ \frac{5}{162}z^8.\]
From this, we see that
\[t(H_{12},U_{1/2})=\frac{43}{27648}>\frac{1}{1024},\]
\[t(H_{12},U_{1/3})= \frac{57161}{60466176}<\frac{1}{1024},\]
\[t(H_{18},U_{1/2})= \frac{1}{82944}<\frac{1}{1024},\]
\[t(H_{19},U_{1/3})= \frac{546961}{544195584}>\frac{1}{1024},\]
\[t(H_{19},U_{1/2})= \frac{1}{82944}<\frac{1}{1024}.\]
By Lemma~\ref{lem:IVT}, we get that $H_{12}$ and $H_{19}$ do not force quasirandomness in regular tournaments. The last thing that we need is a tournamenton $W$ such that $t(H_{18},W)>\frac{1}{1024}$. Let $T$ be the $7$-vertex tournament with adjacency matrix 
\[\begin{bmatrix}
0 & 1 & 1 & 1 & 0 & 0 & 0\\
0 & 0 & 1 & 0 & 1 & 1 & 0\\
0 & 0 & 0 & 1 & 1 & 0 & 1\\
0 & 1 & 0 & 0 & 0 & 1 & 1\\
1 & 0 & 0 & 1 & 0 & 1 & 0\\
1 & 0 & 1 & 0 & 0 & 0 & 1\\
1 & 1 & 0 & 0 & 1 & 0 & 0
\end{bmatrix}.\]
Clearly, $T$ is regular. This tournament was found by an exhaustive computer search through all tournaments on a small number of vertices. By computer, we have found that  
\[t(H_{18},W_T) \approx 0.001249886 > \frac{1}{1024}.\]
Thus, we get that $H_{18}$ does not force quasirandomness in regular tournaments by one more application of Lemma~\ref{lem:IVT}. 
\end{proof}

\section{Positive Results}
\label{sec:flags}

In this section, we complete the proof of Theorem~\ref{th:main} by showing that all tournaments in part \ref{thm:doForce} of the theorem force quasirandomness in regular tournaments. We start with five examples that can be derived straightforwardly from known results. 

\begin{prop}[See~\cite{Lovasz79,CoreglianoParenteSato19}]
\label{prop:alreadyKnownPos}
The tournaments $H_4,H_5,H_7,H_8$ and $H_{17}$ force quasirandomness in regular tournaments. 
\end{prop}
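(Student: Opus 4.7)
The plan splits naturally into two groups. For $H_4 = TT_4$, $H_8 = TT_5$, and $H_{17}$, I would simply invoke Theorem~\ref{th:qriff}: Lov\'asz~\cite{Lovasz79} established that $TT_k$ forces quasirandomness for every $k\ge 4$, and Coregliano, Parente, and Sato~\cite{CoreglianoParenteSato19} did the same for $H_{17}$. Since a tournament that forces quasirandomness in the general sense is, a fortiori, required to drive any \emph{nearly regular} sequence to quasirandomness, these three cases are immediate from the definitions.

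The cases $H_5$ and $H_7$ are more subtle because neither forces quasirandomness in the general sense. Note, however, that $H_5$ (a $C_3$ plus a sink) and $H_7$ (a $C_3$ plus a source) are related by reversing every arc, and the property of forcing quasirandomness in regular tournaments is invariant under arc reversal; so I would only handle one of them, say $H_7$. By Proposition~\ref{prop:3vertex}, the goal becomes showing that $\{C_3, H_7\}$ forces quasirandomness, and Proposition~\ref{prop:1/2everywhere} then translates this to the tournamenton statement: any $W$ satisfying $t(C_3, W) = 1/8$ and $t(H_7, W) = 1/64$ must have $W \equiv 1/2$ almost everywhere.

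To prove the tournamenton statement, I would first use Proposition~\ref{prop:regularCyclicW} to conclude from $t(C_3,W)=1/8$ that $W$ is regular ($d_W^+(x)=1/2$ for a.e.\ $x$). Then, integrating out the source vertex $v$ of $H_7$ first, I would rewrite
\[
t(H_7,W)=\int_0^1\!\!\int_{[0,1]^3} W(v,a)W(v,b)W(v,c)\,W(a,b)W(b,c)W(c,a)\,da\,db\,dc\,dv,
\]
and for each $v$ with $d_W^+(v)=1/2$ introduce the probability measure $d\mu_v := 2W(v,\cdot)\,d\lambda$ on $[0,1]$. The inner integral becomes $\tfrac{1}{8}\,t(C_3,W;\mu_v)$, where $t(C_3,W;\mu_v)$ denotes the $C_3$-density of $W$ with respect to $\mu_v$ on each coordinate. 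A verbatim adaptation of Proposition~\ref{prop:regularCyclicW} to an arbitrary probability measure gives $t(C_3,W;\mu_v)\le 1/8$ with equality iff $W$ is $\mu_v$-regular, so $t(H_7,W)\le 1/64$. Forcing equality yields $\mu_v$-regularity of $W$ for almost every $v$, which unpacks to the codegree identity $\int_0^1 W(x,y)W(v,y)\,dy = 1/4$ for a.e.\ $(v,x)$. Finally, writing $f:=2W-1$ (so that $\int f(x,\cdot)=0$ for a.e.\ $x$ by regularity), the codegree identity becomes $\int f(x,y)f(v,y)\,dy = 0$ for a.e.\ $(v,x)$, and a standard $L^2$ argument (the kernel operator with symbol $f$ then vanishes on all of $L^2$) forces $f\equiv 0$, i.e.\ $W\equiv 1/2$ a.e.

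The main technical point is the final step: the equality clause of the weighted Proposition~\ref{prop:regularCyclicW} naturally delivers only a ``$\mu_v$-a.e.'' conclusion, which must be promoted to a $\lambda$-a.e.\ identity on $[0,1]^2$ before the $L^2$ argument applies. This is the tournament counterpart of the classical graph characterization of quasirandomness via uniform $K_{2,2}$-densities, and although expected, it requires careful measure-theoretic bookkeeping.
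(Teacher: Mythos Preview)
Your treatment of $H_4$, $H_8$, and $H_{17}$ matches the paper's. For $H_5$ and $H_7$ your argument is correct but takes a different, longer route. Both approaches pass to the weighted measure $\mu_v = 2W(v,\cdot)\,d\lambda$ on the neighbourhood of the distinguished vertex, so that the inner integral in $t(H_7,W)$ becomes $\tfrac18\,t(C_3,W;\mu_v)$. You then invoke the weighted \emph{inequality} $t(C_3,W;\mu_v)\le 1/8$ and analyse its equality case, which yields a codegree condition needing the $\mu_v$-a.e.\ to $\lambda$-a.e.\ upgrade you flag. (That upgrade does work: the codegree $c(v,x)=\int W(x,y)W(v,y)\,dy$ is symmetric in $v,x$; the $\mu_v$-null statement forces $W(v,x)=0$ a.e.\ on $\{c\ne\tfrac14\}$, and by symmetry so does $W(x,v)$, contradicting $W(v,x)+W(x,v)=1$ unless that set is null.) The paper instead applies the weighted \emph{identity} $t(C_3,W;\mu)=\tfrac12-3\,t(TT_3,W;\mu)$, valid for every probability measure $\mu$; the resulting $TT_3$-in-neighbourhood term, after integrating over $v$, is exactly $t(TT_4,W)$. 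One obtains the exact relation $t(H_5,W)+3\,t(TT_4,W)=\tfrac{1}{16}$ for regular $W$ (and the analogous one for $H_7$), so $t(H_5,W)=1/64$ forces $t(TT_4,W)=1/64$, and Theorem~\ref{th:qriff} finishes immediately. The paper's route thus sidesteps both the equality-case analysis and the measure-theoretic bookkeeping entirely.
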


\begin{proof}
The tournaments $H_4$, $H_8$ and $H_{17}$ force quasirandomness by Theorem~\ref{th:qriff} (note that $H_4$ and $H_8$ are $TT_4$ and $TT_5$, respectively), and so they force quasirandomness in regular tournaments as well. 

We consider $H_5$ and let $(T_n)_{n\in \mathbb{N}}$ be a nearly regular sequence of tournaments. Note that $H_5$ consists of a cycle on three vertices and one ``sink'' vertex; i.e. a vertex of out-degree zero. For each $v\in V(T_n)$, let $N_n^-(v)$ denote the in-neighbourhood of $v$ and $d_n^-(v)$ the in-degree of $v$. For a set $S\subseteq V(T_n)$, let $T_n[S]$ be the subtournament of $T_n$ induced by $S$. Then
\[\hom(H_5,T_n) = \sum_{v\in V(T_n)}\hom(C_3,T[N_n^-(v)]).\]
To see this, we note that the term corresponding to $v$ counts homomorphisms from $H_5$ to $T_n$ in which the sink is mapped to $v$. Since $\hom(C_3,T[N_n^-(v)])=d_n^-(v)^3\cdot t(C_3,T[N_n^-(v)])$ by definition, this can be rewritten as
\[\hom(H_5,T_n) = \sum_{v\in V(T_n)} \left(d_n^-(v)\right)^3t(C_3,T_n[N_n^-(v)]).\]
Using Remark~\ref{rem:degreeCount}, we can rewrite this again as
\[\hom(H_5,T_n) = \sum_{v\in V(T_n)} \left(d_n^-(v)\right)^3\frac{1}{2}\left(1 -6t(TT_3,T_n[N_n^-(v)]) -o(1)\right).\]
Now, if we use the fact that $(T_n)_{n\in \mathbb{N}}$ is nearly regular, the right side of the above equality becomes
\[(v(T_n)/2)^4 - 3\sum_{v\in V(T_n)} \left(d_n^-(v)\right)^3t(TT_3,T_n[N_n^-(v)]) - o(v(T_n)^4)\]
\[=(v(T_n)/2)^4 - 3\hom(TT_4,T_n) - o(v(T_n)^4)\]
where the fact that $\sum_{v\in V(T_n)} \left(d_n^-(v)\right)^3t(TT_3,T_n[N_n^-(v)]) = \hom(TT_4,T_n)$ follows from a similar ``double-counting'' argument as we used before to express $\hom(H_5,T_n)$ as a summation. 
If we divide this by $v(T_n)^4$, we get
\[t(H_5,T_n) = (1/2)^4 - 3t(TT_4,T_n)-o(1).\]
So, if $\lim_{n\to\infty}t(H_5,T_n)=(1/2)^6$, then $\lim_{n\to\infty}t(TT_4,T_n)= (1/2)^6$. Since $TT_4$ forces quasirandomness (see Theorem~\ref{th:qriff}), the sequence $(T_n)_{n\in \mathbb{N}}$ must be quasirandom, and hence
we get that $H_5$ forces quasirandomness in regular tournaments. Note that $H_7$ is a cycle on three vertices with a source vertex (i.e. a vertex of in-degree zero) and so the fact that it forces quasirandomness in regular tournaments follows from a very similar argument.
\end{proof}

We next consider $C_4=H_6$. The fact that $H_6$ forces quasirandomness in regular tournaments is easily derived from the following result of~\cite{ChanGrzesikKralNoel20}.\footnote{Note that, in~\cite{ChanGrzesikKralNoel20}, they use $C_4$ to denote the $4$-vertex directed cycle digraph, whereas here we let $C_4$ denote the unique $4$-vertex tournament containing such a cycle. Because of this, the value of $t(C_4,W)$ in this paper is precisely $1/4$ times the value of $t(C_4,W)$ in~\cite{ChanGrzesikKralNoel20} for every tournamenton $W$.} 

\begin{lem}[Chan, Grzesik, Kr\'a\v{l} and Noel~{\cite[Corollary~6]{ChanGrzesikKralNoel20}}]
\label{lem:C4}
If $W$ is a tournamenton and $1/2\leq z\leq 1$ such that 
\[t(C_3,W) =\frac{1}{8}\left(z^3 + (1-z)^3\right),\]
then
\[t(C_4,W)\geq \frac{1}{64}\left(z^4 + (1-z)^4\right)\]
where equality holds if and only if there exists a measurable function $f:[0,1] \to [0,1/2]$ such that $W(x,y) = 1/2 + f(x) - f(y)$ for almost every $(x, y) \in [0,1]^2$. 
\end{lem}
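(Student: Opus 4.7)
My plan is to recast the inequality in terms of the out-degree deviation $h(x) := d_W^+(x) - \tfrac{1}{2}$ and then exhibit an explicit sum-of-squares decomposition. Combining \eqref{eq:outdeg} and \eqref{eq:T3C3} yields $t(C_3, W) = \tfrac{1}{8} - \tfrac{3}{2}\|h\|_2^2$, so the hypothesis $t(C_3, W) = \tfrac{1}{8}(z^3 + (1-z)^3) = \tfrac{1}{8}(1 - 3z(1-z))$ is equivalent to $\|h\|_2^2 = \tfrac{z(1-z)}{4}$. Using the algebraic identity $z^4 + (1-z)^4 = 1 - 4z(1-z) + 2z^2(1-z)^2$, the desired conclusion rewrites as
\[t(C_4, W) \;\geq\; \tfrac{1}{64} \;-\; \tfrac{1}{4}\|h\|_2^2 \;+\; \tfrac{1}{2}\|h\|_2^4.\]

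Motivated by the claimed equality cases, I would write $W = P + R$ where $P(x,y) := \tfrac{1}{2} + h(x) - h(y)$ is an ``affine'' symbolic kernel (not necessarily valued in $[0,1]$) and $R := W - P$ is skew-symmetric and satisfies $\int_0^1 R(x,y)\,dy = 0$ for a.e.\ $x$. The first step is to compute $t(C_4, P)$ exactly. I expect the identity $t(C_4, P) = \tfrac{1}{64} - \tfrac{1}{4}\|h\|_2^2 + \tfrac{1}{2}\|h\|_2^4$, resting on two observations: (i) $C_4$ is isomorphic to its reverse, and the substitution $h \mapsto -h$ reverses all arcs of $P$, so only even-index moments of $h$ can appear; (ii) the constraint $\int h = 0$ kills many cross terms, and the integrand has total degree $6$, so only $\|h\|_2^2$ and $\|h\|_2^4$ survive with nonzero coefficient. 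A sanity check against the two-block extremal $W(x,y) = \tfrac{1}{2} + z(\mathbf{1}_A(x) - \mathbf{1}_A(y))$ confirms the formula.

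The main task, and by far the hardest step, is to show that the $R$-involving contributions are nonnegative in aggregate. Expanding
\[t(C_4, W) - t(C_4, P) = \sum_{\emptyset \ne S \subseteq A(C_4)} \int_{[0,1]^4} \prod_{(i,j) \in S} R(x_i, x_j) \prod_{(i,j) \notin S} P(x_i, x_j)\, d\mu^4,\]
my plan is to first use $\int R(x,\cdot) = 0$ and skew-symmetry to kill every term linear in $R$, then to group the surviving terms and re-express them as a sum of manifestly nonnegative objects, such as Hilbert--Schmidt norms $\|T_R\|_{HS}^2$, even powers $\mathrm{tr}(T_R^{2k})$ of the skew-adjoint operator $T_R$ (which are automatically nonnegative), and squared $L^2$-norms of composite kernels built from $R$ and $h$. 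The target identity I aim for has the shape
\[t(C_4, W) - t(C_4, P) = \int_{[0,1]^2} R(x,y)^2 \,\phi(x,y)\, d\mu^2 + (\text{additional nonnegative terms}),\]
for some nonnegative density $\phi$ built from $W$ and $h$. The main obstacle is guessing the correct explicit SOS certificate; if direct guessing fails, a flag-algebra SDP computation on the space of $4$-vertex tournamentons is guaranteed to find it, and this is likely how the cited paper proceeds.

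Finally, equality in the SOS decomposition forces $R \equiv 0$, hence $W(x,y) = \tfrac{1}{2} + h(x) - h(y)$ a.e. Since $W \in [0,1]$ forces $|h(x) - h(y)| \leq \tfrac{1}{2}$ a.e., the essential oscillation of $h$ is at most $\tfrac{1}{2}$; setting $f := h - \mathrm{ess\,inf}\, h$ then yields a function $f : [0,1] \to [0,\tfrac{1}{2}]$ with $W(x,y) = \tfrac{1}{2} + f(x) - f(y)$ a.e., completing the equality characterization.
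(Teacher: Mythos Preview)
The paper does not prove this lemma; it is quoted verbatim from \cite{ChanGrzesikKralNoel20}. So there is no ``paper's proof'' to compare to, only your outline to assess on its own merits.

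Your reformulation is correct: with $h(x)=d_W^+(x)-\tfrac12$, $P(x,y)=\tfrac12+h(x)-h(y)$ and $R=W-P$, one indeed has $\|h\|_2^2=z(1-z)/4$ under the hypothesis, and the target inequality is exactly $t(C_4,W)\ge t(C_4,P)$. However, your write-up leaves the decisive step---the nonnegativity of the $R$-contribution---as an aspiration (``the main obstacle is guessing the correct explicit SOS certificate''). As written, this is a plan, not a proof.

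The gap is easily closed, and flag-algebra SDP is unnecessary, if you pass to the directed $4$-cycle. By Lemma~\ref{lem:sumAllOrientations} (or the footnote preceding Lemma~\ref{lem:C4}), $4\,t(C_4,W)=t(\vec C_4,W)=\operatorname{tr}(T_W^4)$, where $T_W$ is the integral operator with kernel $W$. Write $T_W=T_P+T_R$; here $T_R$ is skew-adjoint with $T_R1=0$, and $T_P$ has range and cokernel contained in $\operatorname{span}\{1,h\}$. Expanding $\operatorname{tr}((T_P+T_R)^4)$ and using only $T_R1=T_R^*1=0$, $\langle T_Rh,h\rangle=0$ (skew-symmetry), and the finite-rank structure of $T_P$, one finds that all odd-in-$R$ terms vanish, $\operatorname{tr}(T_PT_RT_PT_R)=0$, and
\[
\operatorname{tr}(T_W^4)-\operatorname{tr}(T_P^4)\;=\;4\,\|T_Rh\|_2^2\;+\;\operatorname{tr}(T_R^4)\;\ge\;0,
\]
since $\operatorname{tr}(T_R^4)=\|T_R^2\|_{HS}^2\ge0$. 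A direct two-dimensional calculation gives $\operatorname{tr}(T_P^4)=\tfrac{1}{16}-\|h\|_2^2+2\|h\|_2^4$, matching the bound. Equality forces $T_R^2=0$, hence $T_R=0$, hence $R\equiv0$ a.e., and your argument for extracting $f$ from $h$ then applies. So your decomposition is the right one; what is missing is precisely this operator-trace computation, which replaces the unspecified $64$-term expansion over the six arcs of the tournament $C_4$.
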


\begin{prop}[See~{\cite[Corollary~6]{ChanGrzesikKralNoel20}}]
\label{prop:C4}
$H_6$ forces quasirandomness in regular tournaments. 
\end{prop}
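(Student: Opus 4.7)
The plan is to reduce the statement to a question about tournamentons and then apply Lemma~\ref{lem:C4} together with regularity. By Proposition~\ref{prop:3vertex} and Proposition~\ref{prop:1/2everywhere}, it suffices to show that every tournamenton $W$ satisfying $t(C_3, W) = 1/8$ and $t(H_6, W) = (1/2)^6 = 1/64$ must have $W(x,y) = 1/2$ for almost every $(x,y) \in [0,1]^2$.

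So fix such a $W$. By Proposition~\ref{prop:regularCyclicW}, the condition $t(C_3, W) = 1/8$ forces $W$ to be regular, i.e.\ $d_W^+(x) = 1/2$ for almost every $x$. To set up Lemma~\ref{lem:C4}, I need to choose $z \in [1/2, 1]$ with $\frac{1}{8}(z^3 + (1-z)^3) = 1/8$. Using $z^3 + (1-z)^3 = 1 - 3z(1-z)$, this forces $z(1-z) = 0$, and the only admissible value is $z = 1$. The lemma then gives $t(H_6, W) \geq \frac{1}{64}(1^4 + 0^4) = 1/64$, with equality if and only if there is a measurable function $f : [0,1] \to [0, 1/2]$ for which $W(x,y) = 1/2 + f(x) - f(y)$ almost everywhere.

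Since equality holds by hypothesis, $W$ has this form. Integrating out $y$ and using regularity gives
\[
\tfrac{1}{2} = d_W^+(x) = \tfrac{1}{2} + f(x) - \int_0^1 f(y)\,dy
\]
for almost every $x$, so $f$ is almost everywhere equal to its mean, hence almost everywhere constant. Substituting back, $W(x,y) = 1/2$ almost everywhere, as required.

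The main point of potential concern is finding the right parameter $z$, but as soon as one observes that the $t(C_3,W)$ constraint sits at the extreme value $z = 1$, the rest is immediate: the equality case in Lemma~\ref{lem:C4} identifies the small family $W(x,y) = 1/2 + f(x) - f(y)$, and within that family regularity alone forces $f$ to be essentially constant, collapsing $W$ to the $1/2$-valued tournamenton.
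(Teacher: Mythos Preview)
Your proof is correct and follows essentially the same route as the paper: reduce to tournamentons, apply Lemma~\ref{lem:C4} at the extreme parameter $z=1$, and then use regularity to force the function $f$ to be constant. The only difference is cosmetic---you make the reduction via Propositions~\ref{prop:3vertex} and~\ref{prop:1/2everywhere} explicit at the outset, whereas the paper starts directly with a regular tournamenton and invokes Proposition~\ref{prop:1/2everywhere} at the end.
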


\begin{proof}
Note that $H_6$ is $C_4$. Let $W$ be a regular tournamenton. By Proposition~\ref{prop:regularCyclicW}, we have $t(C_3,W)=1/8$. So, applying Lemma~\ref{lem:C4} with $z=1$ gives us that $t(C_4,W)\geq 1/64$ where equality holds if and only if there is a measurable function $f:[0,1]\to[0,1/2]$ such that $W(x,y)=1/2+f(x)-f(y)$ for all $x,y\in [0,1]$. Since $W$ is regular, we have, for almost all $x\in[0,1]$,
\[\frac{1}{2}=d_W^+(x)=\int_0^1W(x,y)dy = \int_0^1\left(\frac{1}{2}+f(x)-f(y)\right)dy\]
which implies that $f(x) = \int_0^1f(y)dy$. Therefore, the function $f$ is constant almost everywhere which tells us that $W(x,y)=1/2$ for almost all $(x,y)\in [0,1]^2$. So, we are done by Proposition~\ref{prop:1/2everywhere}.
\end{proof}

The goal in the rest of this section is to prove the following five theorems which, together with Lemma~\ref{lem:linear} 
and Proposition~\ref{prop:1/2everywhere}, imply that $H_{10}, H_{11},H_{13}$ and $H_{14}$ force quasirandomness in regular tournaments. Note that $H_{15}$ is the tournament obtained from $H_{10}$ by reversing all the arcs. Therefore, we will get that $H_{15}$ forces quasirandomness in regular tournaments as well.


\begin{thm}
\label{th:H10}
For every tournamenton $W$,
\[8\cdot t(C_3,W) + \frac{1}{4}\cdot1024\cdot t(H_{10},W) \leq \frac{5}{4}\]
where equality holds if and only if $W(x,y)=1/2$ for almost all $(x,y)\in[0,1]^2$.
\end{thm}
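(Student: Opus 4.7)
The plan is to use the flag algebra method, as foreshadowed by the section heading. First, using the identity $\sum_H t(H,W) = 1$ summed appropriately over $5$-vertex tournaments, together with the chain rule that expresses $t(C_3, W)$ as a non-negative linear combination of $t(H_i, W)$ over $5$-vertex tournaments $H_i$, I rewrite the claim as
\[\sum_{i=0}^{19} \alpha_i\, t(H_i, W) \geq 0\]
for certain explicit real coefficients $\alpha_i$, with equality characterized precisely when $W \equiv 1/2$ almost everywhere.

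Next, I would search for a sum-of-squares certificate in the flag algebra: a type $\sigma$ (for instance a single labelled vertex, or a rooted $3$-vertex tournament), a list of $\sigma$-flags $F_1,\dots,F_m$ of a suitable order, and a positive semidefinite matrix $Q \succeq 0$ such that, after averaging out the type, the identity
\[\sum_{j,k} Q_{jk}\,\llbracket F_j \cdot F_k\rrbracket_\sigma = \sum_{i=0}^{19}\alpha_i\,t(H_i,\cdot)\]
holds formally in the flag algebra. Since $Q\succeq 0$, the left-hand side evaluates to a non-negative real on every tournamenton, which yields the inequality. In practice the matrix $Q$ is obtained from an SDP solver and then verified by a rigorous symbolic identity check. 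Including an explicit slack term proportional to $1/8 - t(C_3, W)$, which is non-negative by Proposition \ref{prop:regularCyclicW}, in the decomposition can both help produce a feasible certificate and automatically force regularity in the equality case.

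For the equality case, suppose $W$ attains equality. Then each PSD contribution must vanish: $\llbracket F_j\rrbracket_\sigma(W,x) = 0$ for almost every rooting $x$ and every $F_j$ outside the kernel of $Q$. Combined with the fact that equality forces $t(C_3, W) = 1/8$ (so $W$ is regular by Proposition \ref{prop:regularCyclicW}), these vanishing conditions should propagate to enough $3$-, $4$- and $5$-vertex densities to pin down, for example, $t(TT_4, W) = 1/64$. Since $TT_4$ forces quasirandomness by Theorem \ref{th:qriff}, Proposition \ref{prop:1/2everywhere} then yields $W(x,y) = 1/2$ for almost every $(x,y) \in [0,1]^2$.

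The main obstacle is not the inequality itself, which, once a PSD certificate is in hand, reduces to routine verification, but rather the equality analysis: the flag algebra decomposition must be designed so that the vanishing of its summands carries enough information to identify $W$ as constant. This typically means choosing the type and flag set so that $\ker Q$ has a transparent combinatorial meaning, and, where necessary, augmenting the decomposition with explicit slack multiples of known non-negative quantities (such as $1/8 - t(C_3,W)$, or squared differences that vanish on quasirandom limits) so that one can extract either regularity or the correct value of a density that is already known to force quasirandomness.
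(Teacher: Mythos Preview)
Your plan is the same method the paper uses---a flag-algebra sum-of-squares certificate followed by a kernel analysis for the equality case---so at the level of strategy you are on target. The paper carries this out with two types, the labelled $TT_3$ and the labelled $C_3$, and two explicit $8\times 8$ positive semidefinite matrices $A_2,A_3$ (found by SDP and then rationalized), verifying that for every $5$-vertex tournament $J$ the resulting coefficient equals $-5/4$.

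Where your outline diverges is in the equality analysis. You propose to insert a slack multiple of $1/8 - t(C_3,W)$ so that equality immediately forces regularity, and then hope the remaining kernel conditions pin down $t(TT_4,W)=1/64$. The paper does \emph{not} use any slack term and does not establish regularity first. Instead, the kernel of $A_2$ is exactly the span of the all-ones vector, so equality forces all eight $TT_3$-rooted $4$-vertex flag densities to coincide pointwise; integrating two of them yields simultaneously $t(TT_4,W)=\tfrac{1}{8}t(TT_3,W)$ and $t(C_4,W)=\tfrac{1}{8}t(TT_3,W)$. A separate lemma (Lemma~\ref{lem:uniqueness}) then shows that these two relations together already imply $t(TT_3,W)=1/8$ and hence $t(TT_4,W)=1/64$, finishing via $TT_4$. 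So the paper extracts regularity \emph{from} the kernel conditions rather than building it in as slack; your route is plausible but you would need to check that a certificate with nonzero slack and a sufficiently informative kernel actually exists, which is not guaranteed a priori.
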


\begin{thm}
\label{th:H11}
For every tournamenton $W$,
\[8\cdot t(C_3,W) + \frac{1}{4}\cdot1024\cdot t(H_{11},W) \leq \frac{5}{4}\]
where equality holds if and only if $W(x,y)=1/2$ for almost all $(x,y)\in[0,1]^2$.
\end{thm}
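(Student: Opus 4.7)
The strategy is to apply Razborov's flag algebra method. Observe first that the constant tournamenton $W \equiv 1/2$ saturates the inequality, since then $t(C_3, W) = 1/8$ and $t(H_{11}, W) = 2^{-10}$, giving $1 + 1/4 = 5/4$. Using the identity $2t(C_3, W) + 6t(TT_3, W) = 1$ from \eqref{eq:T3C3}, the desired inequality is equivalent to
\[
24\cdot\left(t(TT_3, W) - \tfrac{1}{8}\right) + 256\cdot\left(\tfrac{1}{1024} - t(H_{11}, W)\right) \geq 0.
\]
The first summand is already nonnegative by Proposition~\ref{prop:regularTransitiveW}, and the plan is to control the possibly negative second summand by combining it with a sum of squares and a controlled multiple of the first.

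The main step is constructing a sum-of-squares certificate. Using the standard chain rule, one first expands both $t(TT_3, W)$ and $t(H_{11}, W)$ as nonnegative linear combinations of the densities $t(H_i, W)$ for $i = 0, \ldots, 19$, where $H_0, \ldots, H_{19}$ are the $5$-vertex tournaments of Figure~\ref{fig:smallTourns}, so the quantity to be bounded becomes a signed linear combination $\sum_i c_i\, t(H_i, W)$. For each \emph{type} $\sigma$ (a labelled tournament of order $1$, $2$, or $3$) and each family of $\sigma$-flags $\{F_1, \ldots, F_m\}$ whose unlabelled products yield tournaments of order at most $5$, any positive semidefinite matrix $M^\sigma$ produces a flag-algebra element $\llbracket \sum_{j,k} M^\sigma_{j,k}\, F_j\, F_k \rrbracket_\sigma$ whose value on every tournamenton is nonnegative. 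I would use a semidefinite programming solver to find such matrices $M^\sigma$ whose total contribution, after unlabelling and expansion in $5$-vertex densities, matches $\sum_i c_i\, t(H_i, W)$ exactly (possibly after allowing a nonnegative multiple of $t(TT_3, W) - \tfrac{1}{8}$), and then round the numerical solution to exact rationals to obtain a verifiable algebraic identity.

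The final step is the equality analysis. If equality holds, then every PSD quadratic form in the certificate must vanish on $W$, and the multiplier of $t(TT_3, W) - \tfrac{1}{8}$ must likewise produce zero; in particular, by Proposition~\ref{prop:regularTransitiveW}, $W$ is regular. Decomposing each $M^\sigma$ into rank-one summands, the vanishing of each summand translates into the a.e.\ vanishing of a linear combination of $\sigma$-flag densities viewed as functions of the labelled vertices, yielding a family of pointwise identities on $W$. Combined with regularity, these should force $W(x, y) = 1/2$ for almost every $(x, y) \in [0,1]^2$. The hardest part will be producing a certificate whose square-vanishing constraints are rich enough to pin $W$ down completely rather than merely up to some partial symmetry; in practice this typically requires including the one-vertex type (which yields a degree identity) and the $TT_2$-type (which yields a bilinear correlation identity), and then checking that these identities, together with regularity, rule out every non-constant regular tournamenton. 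Both the SDP search and the equality-case verification are inherently computer-assisted.
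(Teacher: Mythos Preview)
Your proposal correctly identifies the method---this is a flag algebra computation, and the paper proves the theorem in exactly that way---but what you have written is a plan, not a proof. You say you ``would use a semidefinite programming solver'' and then round to rationals; the paper actually does this and records the resulting certificate. Specifically, it applies Lemma~\ref{lem:flagLem} with two explicit $8\times 8$ positive semidefinite matrices $A_2$ and $A_3$, indexed by the flags in $\mathcal{F}_4(TT_3)$ and $\mathcal{F}_4(C_3)$ respectively (both with entries in $\tfrac{1}{7}\mathbb{Z}$), and checks that for every $5$-vertex tournament $J$ the resulting coefficient of $d(J,W)$ equals $-5/4$. Without producing such matrices (or an equivalent explicit certificate), you have not proved anything: the entire content of the theorem lies in the existence of this specific rational solution, and there is no a priori reason an SDP of this size must be feasible with the tightness you need.

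Your equality analysis is also off target. You suggest that one ``typically'' needs the one-vertex and $TT_2$ types to force regularity first and then pin down $W$. The paper does neither: it uses only the $3$-vertex types $TT_3$ and $C_3$, and the equality case comes entirely from the kernel of $A_2$, which is the single line spanned by $(1,\dots,1)^T$. That forces $t_3(F^2_i,W)=\tfrac{1}{8}t_3(TT_3,W)$ for all $i$, and integrating the cases $i=1$ and $i=7$ (or $i\in\{5,7\}$) gives $t(TT_4,W)=t(C_4,W)=\tfrac{1}{8}t(TT_3,W)$; then Lemma~\ref{lem:uniqueness} finishes. In particular, regularity is a \emph{consequence} of the certificate rather than an input you feed in separately, and your detour through \eqref{eq:T3C3} to split off a $t(TT_3,W)-\tfrac{1}{8}$ term is unnecessary. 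The speculative language (``should force'', ``typically requires'') is precisely where the proof is missing: arranging the kernel of your PSD matrix to be small enough for the uniqueness argument is the nontrivial part of the computation, and you have not done it.
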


\begin{thm}
\label{th:H13}
For every tournamenton $W$,
\[8\cdot t(TT_3,W) + \frac{1}{7}\cdot1024\cdot t(H_{13},W) \geq \frac{8}{7}\]
where equality holds if and only if $W(x,y)=1/2$ for almost all $(x,y)\in[0,1]^2$.
\end{thm}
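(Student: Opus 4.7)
The plan is to apply the flag algebra method of Razborov, following the approach the authors announce for Section~\ref{sec:flags}. I would reformulate the target inequality as the assertion, in the tournament flag algebra $\mathcal{A}^0$, that the element
\[
f \;:=\; \tfrac{1024}{7}\cdot H_{13} \;+\; 8\cdot TT_3 \;-\; \tfrac{8}{7}\cdot \mathbf{1}
\]
is non-negative. The certificate will take the form
\[
f \;=\; \sum_\tau \llbracket F_\tau^{\top} M_\tau F_\tau \rrbracket_\tau \;+\; \sum_{H}\alpha_H\cdot H,
\]
where $\tau$ ranges over a small collection of labeled types, each $M_\tau$ is a positive semidefinite matrix indexed by $\tau$-flags on at most $5$ vertices, $\llbracket\cdot\rrbracket_\tau$ is the unlabeling (averaging) operator, and the $\alpha_H\geq 0$ are non-negative coefficients on the $5$-vertex tournaments.

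To produce the certificate, I would first use the chain rule to push all of $f$ down to the $5$-vertex level, rewriting $t(TT_3,W)$ and the constant $8/7$ as linear combinations of $t(H_i,W)$ over the twelve $5$-vertex tournaments from Figure~\ref{fig:smallTourns}. This turns the statement into a single linear identity in a finite-dimensional vector space, which is amenable to a standard semidefinite program. The natural candidate types are the $1$-vertex type and the $3$-vertex types $TT_3$ and $C_3$. I would solve the SDP numerically with a solver such as CSDP or SDPA, round the solution to a rational certificate, and verify the identity by direct polynomial expansion in $\mathcal{A}^0$.

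The main obstacle is the equality case. By Proposition~\ref{prop:regularTransitiveW}, the $TT_3$ contribution alone already forces $W$ to be regular at equality, since $8\, t(TT_3, W) \geq 1$ and equality holds iff $W$ is regular. The remaining burden is to show that the sum-of-squares contribution, once specialized to a regular tournamenton $W$, vanishes only when $W(x,y) = 1/2$ almost everywhere. Concretely, the vanishing of each unlabeled square $\llbracket F_\tau^{\top} M_\tau F_\tau \rrbracket_\tau$ translates into a moment-type identity on $W$, and at least one of these identities should dominate an integral of the form $\int_{[0,1]^2}(W(x,y)-1/2)^2\,dx\,dy$ after integrating out the labeling of $\tau$. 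Arranging the SDP rounding so that the resulting certificate is sharp in the equality case, rather than merely valid, is the delicate step and may require adding a regularization term penalizing deviation from the constant $1/2$ tournamenton before rounding. Once both the inequality and the sharp equality analysis are complete, Proposition~\ref{prop:1/2everywhere} yields that $\tfrac{1024}{7}\cdot H_{13} + 8\cdot TT_3$ forces quasirandomness, and Lemma~\ref{lem:linear} then gives that $H_{13}$ forces quasirandomness in regular tournaments.
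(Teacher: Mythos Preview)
Your overall approach---a flag algebra SDP certificate on $5$-vertex tournaments with equality read off from the kernels of the PSD blocks---is exactly what the paper does. The paper in fact needs only the $TT_3$ type: a single explicit $8\times 8$ rational matrix $A_2$ with no slack coefficients $\alpha_H$, so the inequality part of your plan is sound and matches the paper closely.

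The gap is in your equality analysis. You assert that Proposition~\ref{prop:regularTransitiveW} already makes $W$ regular at equality ``since $8\,t(TT_3,W)\geq 1$ and equality holds iff $W$ is regular.'' That does not follow: equality in the theorem is $8\,t(TT_3,W)+\tfrac{1024}{7}\,t(H_{13},W)=\tfrac{8}{7}$, which is perfectly compatible with $t(TT_3,W)>1/8$ and $t(H_{13},W)<2^{-10}$. Regularity is not free; it has to come out of the certificate, and your sketch for the rest (``one of the identities should dominate $\int(W-1/2)^2$'') is not a plan. In the paper the kernel of $A_2$ is two-dimensional, spanned by $(1,\dots,1)^T$ and $(1,1,-1,-1,1,1,-1,-1)^T$, so the moreover clause of Lemma~\ref{lem:flagLem} does \emph{not} immediately give regularity. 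Instead, pairing kernel constraints and integrating yields $8\,t(TT_4,W)=t(TT_3,W)$ and $8\,t(C_4,W)=t(TT_3,W)$; a separate combinatorial lemma (Lemma~\ref{lem:uniqueness}) then combines these two identities with the $4$-vertex density relation to deduce $t(TT_3,W)=1/8$ and hence $t(TT_4,W)=1/64$, finishing via the fact that $TT_4$ forces quasirandomness. You will need either this argument or a certificate deliberately engineered to contain a positive multiple of the $(d_W^+-1/2)^2$ square as a separate block---neither of which your proposal currently supplies.
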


\begin{thm}
\label{th:H14}
For every tournamenton $W$,
\[8\cdot t(TT_3,W) + \frac{1}{5}\cdot1024\cdot t(H_{14},W) \geq \frac{6}{5}\]
where equality holds if and only if $W(x,y)=1/2$ for almost all $(x,y)\in[0,1]^2$.
\end{thm}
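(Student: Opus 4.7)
The plan is to apply the flag algebra method, following the same template as in the proofs of Theorems~\ref{th:H10}--\ref{th:H13}. First, using the chain rule for homomorphism densities, I would rewrite $t(TT_3,W)$ as a linear combination of the densities $t(H_i,W)$ over the twenty $5$-vertex tournaments $H_0,\dots,H_{19}$, recasting the claim as a single linear inequality on the vector of $5$-vertex densities of $W$. A quick check at $W\equiv 1/2$ confirms consistency of the claimed boundary value: $8\cdot(1/8) + \tfrac{1024}{5}\cdot(1/1024) = 1 + 1/5 = 6/5$, so the constant tournamenton does lie on the boundary, as needed for the claimed equality case.

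Next, I would search for a flag-algebra sum-of-squares certificate. For a small collection of types $\sigma$ (at a minimum the $1$-vertex type, and likely also one or two of the labeled $3$-vertex types) and the associated $\sigma$-flags on enough vertices that their squares expand into the $5$-vertex flag algebra, the goal is to find positive semidefinite matrices $M_\sigma$ such that
\[
8\cdot t(TT_3,W) \;+\; \tfrac{1024}{5}\cdot t(H_{14},W) \;-\; \tfrac{6}{5} \;=\; \sum_{\sigma}\big\llbracket \mathbf{f}_\sigma^{\top} M_\sigma\, \mathbf{f}_\sigma \big\rrbracket_{\sigma}
\]
as an identity in the $5$-vertex flag algebra. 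The matrices are produced numerically by an SDP solver and then rounded to exact rationals, at which point the identity becomes a finite linear check across the twenty $5$-vertex tournaments and can be verified deterministically. Nonnegativity of the right-hand side on every tournamenton follows immediately from positive semidefiniteness of each $M_\sigma$, yielding the stated inequality.

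The harder half will be the equality characterization. If equality holds, then each term $\llbracket \mathbf{f}_\sigma^{\top} M_\sigma\, \mathbf{f}_\sigma \rrbracket_{\sigma}$ in the certificate must vanish when evaluated on $W$. Diagonalizing each $M_\sigma$ exhibits finitely many concrete $\sigma$-flag linear combinations whose densities in $W$ must be zero for almost every choice of the root coordinates. The vanishing square at the $1$-vertex type should force $d_W^+(x) = 1/2$ almost everywhere, i.e., $W$ is regular, while the squares associated with the larger types then constrain the conditional structure of $W$ rooted at one or several random points, successively driving every $5$-vertex density of $W$ to $(1/2)^{10}$; Proposition~\ref{prop:1/2everywhere} then yields $W(x,y) = 1/2$ for almost every $(x,y)$. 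The main obstacle here is not producing a valid SDP certificate but producing one whose kernels are rich enough for the equality analysis to collapse cleanly; a numerically optimal certificate may leave extra slack that admits non-quasirandom extremal configurations, so the search must be guided toward a certificate tight against the constant-$1/2$ tournamenton in every square simultaneously.
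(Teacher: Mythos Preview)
Your plan is the same high-level strategy the paper uses: a flag-algebra sum-of-squares certificate at the $5$-vertex level, followed by a kernel analysis for the equality case. In that sense the approach is correct. However, what you have written is a description of the method rather than a proof; the actual content of a flag-algebra argument is the explicit choice of types, flags, and positive semidefinite matrices, and none of these are supplied. Without them there is nothing to verify.

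A couple of places where your speculation diverges from what the paper actually does. First, the paper does not use the $1$-vertex type at all; the certificate for $H_{14}$ uses the type $TT_2$ (with the four flags in $\mathcal{F}_3(TT_2)$, matrix $A_1$) and the type $C_3$ (with the eight flags in $\mathcal{F}_4(C_3)$, matrix $A_3$). Regularity of an extremal $W$ is not read off from a $1$-vertex-type square; it comes from the kernel of $A_1$, which forces $t_2(F_2^1,W)+t_2(F_3^1,W)=\tfrac12 W(x_1,x_2)$ and hence $t(TT_3,W)+t(C_3,W)=\tfrac14$, which combined with the identity $2t(C_3,W)+6t(TT_3,W)=1$ gives $t(C_3,W)=\tfrac18$. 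Second, for the equality case the paper does not attempt to pin down all twelve $5$-vertex densities. From the kernel of $A_3$ it extracts $t(C_4,W)=\tfrac{1}{64}$ and $t(H_5,W)+t(H_7,W)=\tfrac{1}{32}$, and then the $4$-vertex identity $24t(TT_4,W)+24t(C_4,W)+8t(H_5,W)+8t(H_7,W)=1$ yields $t(TT_4,W)=\tfrac{1}{64}$; the conclusion then follows from the known fact that $TT_4$ forces quasirandomness. This is both shorter and more robust than trying to show every $5$-vertex density is $(1/2)^{10}$ directly from the kernels, which your certificate would have to be exceptionally tight to deliver.
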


We now deduce our main result from these five theorems, after which we will prove the theorems themselves.

\begin{proof}[Proof of Theorem~\ref{th:main}]
By Propositions~\ref{prop:1/2everywhere},~\ref{prop:alreadyKnownPos} and~\ref{prop:C4} and Theorems~\ref{th:H10},~\ref{th:H11},~\ref{th:H13} and~\ref{th:H14}, together with the fact that and $H_{15}$ is obtained from  $H_{10}$ by reversing all arcs, each of the tournaments $H_i$ for $i\in\{4,5,6,7,8,10,11,13,14,15,17\}$ forces quasirandomness in regular tournaments.  By Proposition~\ref{prop:alreadyKnownNeg} and Lemma~\ref{lem:negFive}, the tournaments $H_i$ for $i\in\{0,1,2,3,12,18,19\}$ do not. This completes the proof.
\end{proof}

We now focus our attention on the proofs of Theorems~\ref{th:H10}--\ref{th:H14}. We will apply the \emph{flag algebra} method, introduced by Razborov~\cite{Razborov07}. This method has had a heavy impact throughout extremal combinatorics, including several successful applications to problems involving tournaments~\cite{CoreglianoParenteSato19,CoreglianoRazborov17,BurkeLidickyPfenderPhilips22+,LinialMorgenstern16}. Our presentation of the method will follow along similar lines to the treatment in~\cite[Section~5]{MossNoel23+} for graphs. 

For each tournament $J$ and tournamenton $W$, the \emph{induced density} of $J$ in $W$ is defined to be 
\begin{equation}\label{eq:dt}d(J,W) := \frac{v(J)!}{\aut(J)}\cdot t(J,W)\end{equation} 
where $\Aut(J)$ is the set of all automorphisms of $J$ and $\aut(J):=|\Aut(J)|$. It is not hard to see that $d(J,W)$ is precisely the probability that a $W$-random tournament with $v(J)$ vertices is isomorphic to $J$. The following lemma just says that these probabilities sum to $1$.

\begin{lem}
\label{lem:sumToOne}
For any $m\geq1$ and tournamenton $W$,
\[\sum_{J: v(J)=m}d(J,W)=1\]
where the sum is over all $m$-vertex tournaments $J$ up to isomorphism. 
\end{lem}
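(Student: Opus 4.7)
The plan is to prove the identity by a total probability argument applied to the $W$-random tournament on $m$ vertices. Since $d(J,W)$ was defined precisely so that it equals the probability that a $W$-random tournament on $v(J)=m$ vertices is isomorphic to $J$, and the events ``$T\cong J$'', ranging over isomorphism classes $J$ of $m$-vertex tournaments, partition the sample space, their probabilities must sum to $1$. The job is just to extract this from the integral definition in \eqref{eq:dt}.

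To do so rigorously, I would proceed in two steps. First, for any labeled tournament $T$ on vertex set $\{1,\dots,m\}$, the quantity
\[
p(T,W)\;:=\;\int_{[0,1]^m}\prod_{ij\in A(T)} W(x_i,x_j)\,dx_1\cdots dx_m
\]
depends only on the isomorphism class of $T$: an isomorphism between two labelings is a permutation $\sigma\in S_m$, and the change of variables $y_k=x_{\sigma^{-1}(k)}$ identifies the corresponding two integrals. Hence $p(T,W)=t(J,W)$ whenever $T\cong J$. Second, for each fixed $(x_1,\dots,x_m)\in[0,1]^m$,
\[
\sum_{T}\prod_{ij\in A(T)} W(x_i,x_j)\;=\;\prod_{1\le i<j\le m}\bigl(W(x_i,x_j)+W(x_j,x_i)\bigr)\;=\;1,
\]
where the sum is over all labeled tournaments $T$ on $\{1,\dots,m\}$; here I expand the product and use the defining identity $W(x_i,x_j)+W(x_j,x_i)=1$ for every unordered pair $\{i,j\}$. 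Integrating this identity over $[0,1]^m$ gives $\sum_{T} p(T,W)=1$.

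Finally, I would group the labeled tournaments by isomorphism class. By orbit--stabilizer, the number of labeled tournaments on $\{1,\dots,m\}$ isomorphic to a given unlabeled $m$-vertex tournament $J$ equals $m!/\aut(J)$. Combining this with the two observations above,
\[
1\;=\;\sum_{T}p(T,W)\;=\;\sum_{J:\,v(J)=m}\frac{m!}{\aut(J)}\,t(J,W)\;=\;\sum_{J:\,v(J)=m} d(J,W),
\]
as required. There is no substantive obstacle; the only thing to watch is the bookkeeping between labeled and unlabeled tournaments, namely that each of the $m!/\aut(J)$ labeled representatives of an isomorphism class $J$ contributes precisely $t(J,W)$ (not some labeling-dependent integral) to the inner sum.
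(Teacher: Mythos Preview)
Your proof is correct and follows the same idea the paper indicates: the paper does not give a formal proof of this lemma, merely remarking that $d(J,W)$ is the probability a $W$-random tournament on $m$ vertices is isomorphic to $J$, so that the lemma ``just says that these probabilities sum to $1$.'' Your argument is a careful unpacking of exactly this observation via the integral definition, the identity $W(x_i,x_j)+W(x_j,x_i)=1$, and orbit--stabilizer bookkeeping.
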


In this paper, a \emph{flag} is a pair $(F,r)$ where $F$ is a tournament on vertex set $\{1,\dots,v(F)\}$ and $r\leq v(F)$. The vertices $\{1,\dots,r\}$ are known as the \emph{roots} of the flag. Given a flag $(F,r)$ and a tournamenton $W$, define $t_r(F,W):[0,1]^r\to [0,1]$ by
\[t_r(F,W)(x_1,\dots,x_r):=\int_0^1\cdots \int_0^1\prod_{ij\in A(F)}W(x_i,x_j)dx_{r+1}\cdots dx_{v(F)}.\]
That is, it is the same as the usual homomorphism density, except that we only integrate over the variables $x_i$ for $i\in\{r+1,\dots,v(F)\}$. Now, for $r\leq k$ and any tournament $R$ with $V(R)=\{1,\dots,r\}$, let $\mathcal{F}_k(R)$ be the collection of all distinct (but possibly isomorphic) flags $(F,r)$ such that $F$ has $k$ vertices and the subtournament of $F$ induced by $\{1,\dots,r\}$ is equal to $R$ (not just isomorphic to $R$, but has exactly the same arc set). Note that $\mathcal{F}_k(R)$ has cardinality $2^{r(k-r)+\binom{k-r}{2}}$. The following lemma uses the same idea as the proof of Proposition~\ref{prop:regularTransitiveW}; that is, it exploits the fact that $W(x,y)+W(y,x)=1$ for every tournamenton $W$ and $(x,y)\in[0,1]^2$. 

\begin{lem}
\label{lem:sumAllOrientations}
Let $R$ be a tournament with $V(R)=\{1,\dots,r\}$ and let $k\geq r$. For any tournamenton $W$ we have
\[\sum_{(F,r)\in \mathcal{F}_k(R)} t_r(F,W)(x_1,\dots,x_r) = t_r(R,W)(x_1,\dots,x_r)\]
for almost every $(x_1,\dots,x_r)\in [0,1]^r$. 
\end{lem}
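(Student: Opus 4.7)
The plan is to establish the identity pointwise (almost everywhere) by exploiting the defining property $W(x,y)+W(y,x)=1$ of a tournamenton to collapse the sum over flags. First I would fix a point $(x_1,\dots,x_r)\in[0,1]^r$ and observe that, because every flag $(F,r)\in\mathcal{F}_k(R)$ restricts to $R$ on $\{1,\dots,r\}$, the arcs of $F$ split into the arcs of $R$ together with the ``extra'' oriented edges incident to at least one vertex in $\{r+1,\dots,k\}$. There are exactly $r(k-r)+\binom{k-r}{2}$ such extra edges, and the flags of $\mathcal{F}_k(R)$ are in bijection with orientations of these edges. Consequently,
\[\prod_{ij\in A(F)}W(x_i,x_j) \;=\; \Bigl(\prod_{ij\in A(R)}W(x_i,x_j)\Bigr)\cdot\prod_{\{i,j\}\text{ extra}} W(x_{\sigma_F(i,j)},x_{\tau_F(i,j)}),\]
where the last product records the orientation of each extra edge chosen by $F$.

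Second, I would sum over $(F,r)\in\mathcal{F}_k(R)$ before integrating. The first factor is independent of $F$ and pulls out. For the second factor, summing over all orientations of the extra edges is the same as expanding the product $\prod_{\{i,j\}\text{ extra}}\bigl(W(x_i,x_j)+W(x_j,x_i)\bigr)$, and the tournamenton identity makes each factor equal to $1$. This yields
\[\sum_{(F,r)\in\mathcal{F}_k(R)}\prod_{ij\in A(F)}W(x_i,x_j) \;=\; \prod_{ij\in A(R)}W(x_i,x_j)\]
for every $(x_1,\dots,x_k)\in[0,1]^k$.

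Finally, I would integrate both sides over $x_{r+1},\dots,x_k$. On the left, interchanging the finite sum with the integral (no Fubini issue arises) produces $\sum_{(F,r)\in\mathcal{F}_k(R)} t_r(F,W)(x_1,\dots,x_r)$. On the right, the integrand does not depend on $x_{r+1},\dots,x_k$, so integration simply leaves $\prod_{ij\in A(R)}W(x_i,x_j)$. Since $v(R)=r$, the definition of $t_r(R,W)$ prescribes integration over an empty collection of variables, and hence $t_r(R,W)(x_1,\dots,x_r)=\prod_{ij\in A(R)}W(x_i,x_j)$, matching the right-hand side.

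There is no real obstacle here; the argument is essentially the same ``summing over orientations'' trick used in the proof of Proposition~\ref{prop:regularTransitiveW}, now carried out edge-by-edge. The only mild subtlety is that $t_r(F,W)$ is only defined up to a set of measure zero in $(x_1,\dots,x_r)$, so the conclusion must be stated as holding almost everywhere.
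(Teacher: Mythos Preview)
Your argument is correct and follows essentially the same approach as the paper: both exploit the identity $W(x,y)+W(y,x)=1$ to collapse the sum over orientations of the non-root edges. The only cosmetic difference is that the paper carries this out by induction on the number of edges (deleting one non-root edge at a time and pairing up the two orientations), whereas you expand the product $\prod_{\{i,j\}\text{ extra}}\bigl(W(x_i,x_j)+W(x_j,x_i)\bigr)$ in one step; the underlying idea is identical.
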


\begin{proof}
For this proof, it is convenient to extend the notion of a flag slightly to include any pair $(D,r)$ where $D$ is a digraph on vertex set $\{1,\dots,v(D)\}$ and $r\leq v(D)$. The function $t_r(D,W)(x_1,\dots,x_r)$ is defined in the same way as it is for tournaments. Given a tournament $R$ on vertex set $\{1,\dots,r\}$ and a graph $G$ with vertex set $\{1,\dots,m\}$ containing a clique on $\{1,\dots,r\}$, let $\mathcal{F}_G(R)$ be the set of all flags $(D,r)$ where $D$ is obtained from orienting the edges of $G$ so that the subdigraph induced by $\{1,\dots,r\}$ is equal to $R$. We claim that 
\[\sum_{(D,r)\in \mathcal{F}_G(R)} t_r(D,W)(x_1,\dots,x_r) = t_r(R,W)(x_1,\dots,x_r)\]
for almost every $(x_1,\dots,x_r)\in [0,1]^r$. We proceed by induction on $e(G)$. First, if every edge of $G$ is in $\{1,\dots,r\}$, then the equality is clear, since integrating out the variables corresponding to vertices not in $\{1,\dots,r\}$ just produces factors of $1$. Now, let $uv$ be any edge of $G$ not contained in $\{1,\dots,r\}$. For any $(D,r)\in\mathcal{F}_G(R)$ containing an arc from $u$ to $v$, there is a flag $(D',r)\in \mathcal{F}_G(R)$ with all the same arcs as $D$ except that the edge $uv$ is oriented in the other direction. Let $D''$ be obtained from $D$ by deleting the arc $uv$. Since $W(x,y)+W(y,x)=1$ for almost all $(x,y)\in[0,1]^2$ we have
\[t_r(D,W)(x_1,\dots,x_r) +  t_r(D',W)(x_1,\dots,x_r) = t_r(D'',W)(x_1,\dots,x_r).\]
Pairing up all digraphs in $\mathcal{F}_G(R)$ in this way gives us that
\[\sum_{(D,r)\in \mathcal{F}_G(R)} t_r(D,W)(x_1,\dots,x_r) = \sum_{(D'',r)\in \mathcal{F}_{G\setminus \{e\}}(R)} t_r(D'',W)(x_1,\dots,x_r)\]
and we are done by induction.
\end{proof}

Given tournaments $H$ and $J$ with $v(H)\leq v(J)$, define the \emph{injective homomorphism density} $t_{\inj}(H,J)$ to be the probability that a uniformly random
injective function from $V(H)$ to $V(J)$ is a homomorphism. That is, it is the number of injective homomorphisms from $H$ to $J$ divided by the number of injective functions from $V(H)$ to $V(J)$.
In the case that $v(H)>v(J)$, simply define $t_{\inj}(H,J)$ to be zero. Let $\mathcal{T}_{\leq m}$ be the set of all tournaments on at most $m$ vertices up to isomorphism and $\mathbb{R}[\mathcal{T}_{\leq m}]$ be the set of all formal linear combinations of elements of $\mathcal{T}_{\leq m}$. We extend $t_{\inj}(\cdot,W)$ to elements of $\mathbb{R}[\mathcal{T}_{\leq m}]$ linearly, analogous to $t(\cdot,W)$. We derive the next lemma from Lemma~\ref{lem:sumAllOrientations}. 


\begin{lem}
\label{lem:t to tinj}
For any $m\geq 1$, $H\in\mathbb{R}[\mathcal{T}_{\leq m}]$ and tournamenton $W$,
\[t(H,W) = \sum_{J: v(J)=m}t_{\inj}(H,J)\cdot d(J,W)\]
where the sum is over all $m$-vertex tournaments $J$ up to isomorphism. 
\end{lem}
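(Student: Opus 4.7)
The plan is, by linearity, to reduce to the case of a single tournament $H$ with $V(H)=\{1,\dots,k\}$ and $k\le m$, and then to derive the identity by regrouping the sum obtained from applying Lemma~\ref{lem:sumAllOrientations} with $R=H$ (i.e., with the number of roots equal to $v(H)$ and the total number of vertices equal to $m$).

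I would start by applying Lemma~\ref{lem:sumAllOrientations} with $R=H$ to get, for almost every $(x_1,\dots,x_k)\in [0,1]^k$,
\[\sum_{(F,k)\in \mathcal{F}_m(H)} t_k(F,W)(x_1,\dots,x_k)\;=\;t_k(H,W)(x_1,\dots,x_k)\;=\;\prod_{ij\in A(H)}W(x_i,x_j).\]
Integrating both sides over $[0,1]^k$, the right-hand side becomes $t(H,W)$ and the left-hand side becomes $\sum_{(F,k)\in \mathcal{F}_m(H)} t(F,W)$, so it remains only to reorganise this sum by isomorphism class of the underlying $m$-vertex tournament $F$.

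For each $m$-vertex tournament $J$ (taken up to isomorphism), I would count the number of $(F,k)\in \mathcal{F}_m(H)$ with $F\cong J$ by an orbit-counting argument. A labelled copy $F$ of $J$ on $\{1,\dots,m\}$ whose restriction to $\{1,\dots,k\}$ equals $H$ is determined by a bijection $\phi:\{1,\dots,m\}\to V(J)$ whose restriction to $\{1,\dots,k\}$ is an injective homomorphism from $H$ into $J$ (which is automatically an embedding, since both $H$ and the subtournament induced by $\phi(\{1,\dots,k\})$ have exactly $\binom{k}{2}$ arcs). There are $t_{\inj}(H,J)\cdot m!/(m-k)!$ such injective homomorphisms, each one extends to a bijection $\phi$ in $(m-k)!$ ways, and each $F\cong J$ is produced by exactly $\aut(J)$ such bijections; hence the number of flags in $\mathcal{F}_m(H)$ with $F\cong J$ is $t_{\inj}(H,J)\cdot \tfrac{m!}{\aut(J)}$. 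Combining this with $t(F,W)=t(J,W)$ for $F\cong J$ and the definition $d(J,W)=\tfrac{m!}{\aut(J)}t(J,W)$ from \eqref{eq:dt}, the sum collapses to $\sum_{J:v(J)=m} t_{\inj}(H,J)\cdot d(J,W)$, as required.

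I do not anticipate any serious obstacle. The only step requiring a touch of care is the orbit-counting, in particular the observation that, between tournaments, an injective homomorphism is automatically an embedding: since the target is fully oriented, no extra arcs can be added to the image of $H$ without destroying the tournament property, so the induced subtournament on $\phi(\{1,\dots,k\})$ is exactly a copy of $H$.
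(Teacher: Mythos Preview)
Your proposal is correct and follows essentially the same approach as the paper: reduce by linearity, apply Lemma~\ref{lem:sumAllOrientations} with $R=H$ and $k=m$, integrate, and then count flags $(F,k)\in\mathcal{F}_m(H)$ with $F\cong J$ to identify the coefficient of $t(J,W)$ as $t_{\inj}(H,J)\cdot m!/\aut(J)$. Your orbit-counting formulation (count bijections $\phi:\{1,\dots,m\}\to V(J)$ whose restriction to $\{1,\dots,k\}$ embeds $H$, then divide by $\aut(J)$) is in fact slightly cleaner than the paper's version, which phrases the same count in terms of cosets of $\Aut(J)$ inside a set $\Gamma\subseteq S_m$ that is not actually a subgroup; your explicit remark that injective homomorphisms between tournaments are automatically embeddings is also a nice touch that the paper leaves implicit.
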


\begin{proof}
We may assume that $H$ is a single graph, as the general result will follow from linearity. Let $r=v(H)$ and note that we may assume $V(H)=\{1,\dots,r\}$. By Lemma~\ref{lem:sumAllOrientations}, we have that 
\begin{equation}\label{eq:t to tinj}t_r(H,W)(x_1,\dots,x_r) = \sum_{(F,r)\in \mathcal{F}_m(H)} t_r(F,W)(x_1,\dots,x_r)\end{equation}
for almost every $(x_1,\dots,x_r)\in [0,1]^r$. Integrating this over all $(x_1,\dots,x_r)\in [0,1]^r$ gives us $t(H,W)$ on the left side. 

On the right side, we get $2^{r(m-r)+\binom{m-r}{2}}$ terms, each of which is of the form $t(J,W)$ for some graph $J$ on $m$ vertices. For each graph $J$ on $m$ vertices, up to isomorphism, let us compute the coefficient of $t(J,W)$. If $\hom(H,J)=0$, then the coefficient is zero, so we assume that there is at least one injective homomorphism from $H$ to $J$. So, we may assume that $V(J)=\{1,\dots,m\}$ and the subtournament of $J$ on $\{1,\dots,r\}$ is equal to $H$. Consider the subgroup $\Gamma$ of the symmetric group $S_m$ consisting of bijections on $\{1,\dots,m\}$ whose restriction to $\{1,\dots,r\}$ is a homomorphism from $H$ to $J$. Then $\Aut(J)$ is a subgroup of $\Gamma$. Now, the coefficient of $t(J,W)$ on the right side of \eqref{eq:t to tinj} is equal to the number of cosets of $\Aut(J)$ in $\Gamma$ which, by standard facts from group theory, is $\frac{\hom_{\inj}(H,J)(m-r)!}{\aut(J)}$. Putting all of this together,
\[t(H,W) = \sum_{J:v(J)=m}\frac{\hom_{\inj}(H,J)(m-r)!}{\aut(J)}t(J,W)\]
\[= \sum_{J:v(J)=m}\left(\frac{\hom_{\inj}(H,J)(m-r)!}{m!}\right)\cdot \left(\frac{m!}{\aut(J)}t(J,W)\right)=\sum_{J:v(J)=m}t_{\inj}(H,J)d(J,W).\]
\end{proof}

At this point, let us briefly pause our discussion of the flag algebra method to prove the following lemma which will be very useful in characterizing the equality cases in Theorems~\ref{th:H10}--\ref{th:H14}. 

\begin{lem}
\label{lem:uniqueness}
If $W$ is a tournamenton such that $8t(C_4,W)=t(TT_3,W)$ and $8t(TT_4,W)=t(TT_3,W)$, then $W=1/2$ almost everywhere. 
\end{lem}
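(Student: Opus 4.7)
The plan is a two-step argument using the two hypotheses separately. Step 1 uses only $8t(TT_4,W)=t(TT_3,W)$ to show that $W$ is regular (i.e.\ $t(TT_3,W)=1/8$). Step 2 then combines regularity with $8t(C_4,W)=t(TT_3,W)=1/8$ to force $W\equiv 1/2$ almost everywhere, by invoking the argument used in Proposition~\ref{prop:C4}.

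For Step 1, the key is a ``rooted'' analogue of Proposition~\ref{prop:regularTransitiveW}. For each $v\in[0,1]$, set $\sigma_v := \int_0^1\int_0^1\int_0^1 W(v,x)W(v,y)W(v,z)W(x,y)W(x,z)W(y,z)\,dx\,dy\,dz$, which is the rooted $TT_4$ density with $v$ as the source; note that $t(TT_4,W)=\int_0^1 \sigma_v\,dv$. Mimicking the proof of Proposition~\ref{prop:regularTransitiveW} (swap $y$ and $z$ in the integral and use $W(y,z)+W(z,y)=1$) gives $2\sigma_v = \int_0^1 W(v,x)\,G_v(x)^2\,dx$, where $G_v(x):=\int_0^1 W(v,y)W(x,y)\,dy$; the analogous swap shows $\int_0^1 W(v,x) G_v(x)\,dx = d_W^+(v)^2/2$. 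Cauchy--Schwarz with respect to the measure $W(v,x)\,dx$ then yields $\sigma_v \geq d_W^+(v)^3/8$, so $8t(TT_4,W)\geq\int_0^1 d_W^+(v)^3\,dv$. A second application of Cauchy--Schwarz (pairing $(d_W^+)^{1/2}$ with $(d_W^+)^{3/2}$), together with $\int_0^1 d_W^+=1/2$ and $\int_0^1 (d_W^+)^2 = 2t(TT_3,W)$, gives $\int_0^1 (d_W^+)^3 \geq 8\,t(TT_3,W)^2$. Combining, $t(TT_3,W) = 8t(TT_4,W)\geq 8\,t(TT_3,W)^2$, so $t(TT_3,W)\leq 1/8$; Proposition~\ref{prop:regularTransitiveW} gives the matching lower bound, and hence $t(TT_3,W)=1/8$ and $W$ is regular.

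For Step 2, the hypothesis now gives $t(C_4,W)=1/64$, and the argument from the proof of Proposition~\ref{prop:C4} applies verbatim: Proposition~\ref{prop:regularCyclicW} yields $t(C_3,W)=1/8$, Lemma~\ref{lem:C4} with $z=1$ (in its equality case) produces a measurable $f:[0,1]\to[0,1/2]$ with $W(x,y)=1/2+f(x)-f(y)$ almost everywhere, and regularity of $W$ forces $f$ to be constant, so $W\equiv 1/2$ almost everywhere. The main technical obstacle is Step 1, specifically establishing the rooted inequality $\sigma_v\geq d_W^+(v)^3/8$; the remainder is a routine double Cauchy--Schwarz together with the $C_4$ argument developed earlier in the paper.
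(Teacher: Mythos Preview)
Your proof is correct, but it takes a genuinely different route from the paper's.

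The paper's argument is a short counting identity: expanding $t(TT_3,W)$ over $4$-vertex tournaments via Lemma~\ref{lem:t to tinj} and then subtracting off $\tfrac{3}{24}\sum_J d(J,W)=\tfrac{1}{8}$ yields
\[
t(TT_3,W)-\tfrac{1}{8}=t(TT_4,W)-t(C_4,W).
\]
The two hypotheses together make the right side vanish, giving $t(TT_3,W)=1/8$ and hence $t(TT_4,W)=1/64$; the conclusion then follows immediately from the fact that $TT_4$ forces quasirandomness (Theorem~\ref{th:qriff}). No analytic inequalities, no appeal to Lemma~\ref{lem:C4}.

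Your approach decouples the hypotheses. Your Step~1 is a neat self-contained argument: the rooted Cauchy--Schwarz bound $\sigma_v\ge d_W^+(v)^3/8$ together with the power-mean inequality for $d_W^+$ shows that $8t(TT_4,W)=t(TT_3,W)$ \emph{alone} already forces regularity --- a slightly sharper statement than what the paper extracts. Your Step~2 then reruns the proof of Proposition~\ref{prop:C4}, which depends on the equality case of Lemma~\ref{lem:C4} (a non-trivial result imported from~\cite{ChanGrzesikKralNoel20}). So your argument is longer and leans on heavier external machinery, whereas the paper's is a two-line linear identity followed by one citation; on the other hand, yours isolates a standalone consequence of the $TT_4$ hypothesis that the paper's symmetric treatment does not.
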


\begin{proof}
There are four tournaments on four vertices and, among these, $TT_4$ contains four transitive triangles, $C_4$ contains two, and the other two tournaments, $H_5$ and $H_7$, contain three each. There are $4!=24$ injective maps from a set of cardinality three to a set of cardinality four. So, applying Lemma~\ref{lem:t to tinj} to the case $H=TT_3$ and $m=4$ yields
\[t(TT_3,W) = \frac{4}{24}d(TT_4,W) + \frac{2}{24}d(C_4,W)+\frac{3}{24}d(H_5,W)+\frac{3}{24}d(H_7,W).\]
Now, using Lemma~\ref{lem:sumToOne} to cancel out the terms involving $H_5$ and $H_7$ gives us
\[t(TT_3,W)-\frac{3}{24} = \frac{1}{24}d(TT_4,W) - \frac{1}{24}d(C_4,W).\]
By \eqref{eq:dt} and the fact that $\aut(TT_4)=\aut(C_4)=1$, this is equivalent to
\[t(TT_3,W)-\frac{1}{8} = t(TT_4,W) - t(C_4,W).\]
Now, using the hypothesis of the lemma, we get that the right side is zero. Therefore, we have $t(TT_3,W)=1/8$. Applying the hypothesis once again gives us $t(TT_4,W)=1/64$ which, by the fact that $TT_4$ forces quasirandomness (see Theorem~\ref{th:qriff}), implies that $W=1/2$ almost everywhere. 
\end{proof}

We now present the flags that we will use in the proofs of Theorems~\ref{th:H10}--\ref{th:H14}. In each depiction of a flag $(F,r)$, the root vertices $1,\dots,r$ are depicted as square nodes, listed left to right in increasing order. Each flag will have exactly one non-root vertex which is depicted as a circular node. We use the flags in $\mathcal{F}_3(TT_2)$, labelled by $F^1_i$ for $1\leq i\leq 4$ as follows:

\begin{center}
    \begin{tikzpicture}[scale=1]
        \begin{scope}[xshift=0cm, yshift=0.0cm]
          \node[fill=black, rectangle, inner sep=0pt,minimum size=7pt] at (210:1cm) (1) {};
          \node[fill=black, rectangle, inner sep=0pt,minimum size=7pt] at (330:1cm) (2) {};
          \node[fill=black, circle, inner sep=0pt,minimum size=7pt] at (90:1cm) (3) {};
          \node at (270:0.7cm) (4) {};
          \draw[->] (2) to (1);
          \draw[->] (3) to (1);
          \draw[->] (3) to (2);
          \node (label) at (270:1.5cm) {$F^1_1$};
        \end{scope}
        \begin{scope}[xshift=3cm, yshift=0.0cm]
          \node[fill=black, rectangle, inner sep=0pt,minimum size=7pt] at (210:1cm) (1) {};
          \node[fill=black, rectangle, inner sep=0pt,minimum size=7pt] at (330:1cm) (2) {};
          \node[fill=black, circle, inner sep=0pt,minimum size=7pt] at (90:1cm) (3) {};
          \node at (270:0.7cm) (4) {};
          \draw[->] (2) to (1);
          \draw[->] (3) to (1);
          \draw[->] (2) to (3);
          \node (label) at (270:1.5cm) {$F^1_2$};
        \end{scope}
        \begin{scope}[xshift=6cm, yshift=0.0cm]
          \node[fill=black, rectangle, inner sep=0pt,minimum size=7pt] at (210:1cm) (1) {};
          \node[fill=black, rectangle, inner sep=0pt,minimum size=7pt] at (330:1cm) (2) {};
          \node[fill=black, circle, inner sep=0pt,minimum size=7pt] at (90:1cm) (3) {};
          \node at (270:0.7cm) (4) {};
          \draw[->] (2) to (1);
          \draw[->] (1) to (3);
          \draw[->] (3) to (2);
          \node (label) at (270:1.5cm) {$F^1_3$};
        \end{scope}
        \begin{scope}[xshift=9cm, yshift=0.0cm]
          \node[fill=black, rectangle, inner sep=0pt,minimum size=7pt] at (210:1cm) (1) {};
          \node[fill=black, rectangle, inner sep=0pt,minimum size=7pt] at (330:1cm) (2) {};
          \node[fill=black, circle, inner sep=0pt,minimum size=7pt] at (90:1cm) (3) {};
          \node at (270:0.7cm) (4) {};
          \draw[->] (2) to (1);
          \draw[->] (1) to (3);
          \draw[->] (2) to (3);
          \node (label) at (270:1.5cm) {$F^1_4$};
        \end{scope}
    \end{tikzpicture}
    \end{center}
We also use the flags in $\mathcal{F}_4(TT_3)$, which we label by $F^2_i$ for $1\leq i\leq 8$ as follows:
\begin{center}
    \begin{tikzpicture}[scale=1]
        \begin{scope}[xshift=0cm, yshift=0.0cm]
          \node[fill=black, rectangle, inner sep=0pt,minimum size=7pt] at (180:1cm) (1) {};
          \node[fill=black, rectangle, inner sep=0pt,minimum size=7pt] at (270:1cm) (2) {};
          \node[fill=black, rectangle, inner sep=0pt,minimum size=7pt] at (0:1cm) (3) {};
          \node[fill=black, circle, inner sep=0pt,minimum size=7pt] at (90:1cm) (4) {};
          \draw[->] (2) to (1);
          \draw[->] (3) to (1);
          \draw[->] (4) to (1);
          \draw[->] (3) to (2);
          \draw[->] (4) to (2);
          \draw[->] (4) to (3);
          \node (label) at (270:1.5cm) {$F^2_1$};
        \end{scope}
        \begin{scope}[xshift=3cm, yshift=0.0cm]
          \node[fill=black, rectangle, inner sep=0pt,minimum size=7pt] at (180:1cm) (1) {};
          \node[fill=black, rectangle, inner sep=0pt,minimum size=7pt] at (270:1cm) (2) {};
          \node[fill=black, rectangle, inner sep=0pt,minimum size=7pt] at (0:1cm) (3) {};
          \node[fill=black, circle, inner sep=0pt,minimum size=7pt] at (90:1cm) (4) {};
          \draw[->] (2) to (1);
          \draw[->] (3) to (1);
          \draw[->] (4) to (1);
          \draw[->] (3) to (2);
          \draw[->] (4) to (2);
          \draw[->] (3) to (4);
          \node (label) at (270:1.5cm) {$F^2_2$};
        \end{scope}
        \begin{scope}[xshift=6cm, yshift=0.0cm]
          \node[fill=black, rectangle, inner sep=0pt,minimum size=7pt] at (180:1cm) (1) {};
          \node[fill=black, rectangle, inner sep=0pt,minimum size=7pt] at (270:1cm) (2) {};
          \node[fill=black, rectangle, inner sep=0pt,minimum size=7pt] at (0:1cm) (3) {};
          \node[fill=black, circle, inner sep=0pt,minimum size=7pt] at (90:1cm) (4) {};
          \draw[->] (2) to (1);
          \draw[->] (3) to (1);
          \draw[->] (4) to (1);
          \draw[->] (3) to (2);
          \draw[->] (2) to (4);
          \draw[->] (4) to (3);
          \node (label) at (270:1.5cm) {$F^2_3$};
        \end{scope}
        \begin{scope}[xshift=9cm, yshift=0.0cm]
          \node[fill=black, rectangle, inner sep=0pt,minimum size=7pt] at (180:1cm) (1) {};
          \node[fill=black, rectangle, inner sep=0pt,minimum size=7pt] at (270:1cm) (2) {};
          \node[fill=black, rectangle, inner sep=0pt,minimum size=7pt] at (0:1cm) (3) {};
          \node[fill=black, circle, inner sep=0pt,minimum size=7pt] at (90:1cm) (4) {};
          \draw[->] (2) to (1);
          \draw[->] (3) to (1);
          \draw[->] (4) to (1);
          \draw[->] (3) to (2);
          \draw[->] (2) to (4);
          \draw[->] (3) to (4);
          \node (label) at (270:1.5cm) {$F^2_4$};
        \end{scope}
        \begin{scope}[xshift=0cm, yshift=-3.0cm]
          \node[fill=black, rectangle, inner sep=0pt,minimum size=7pt] at (180:1cm) (1) {};
          \node[fill=black, rectangle, inner sep=0pt,minimum size=7pt] at (270:1cm) (2) {};
          \node[fill=black, rectangle, inner sep=0pt,minimum size=7pt] at (0:1cm) (3) {};
          \node[fill=black, circle, inner sep=0pt,minimum size=7pt] at (90:1cm) (4) {};
          \draw[->] (2) to (1);
          \draw[->] (3) to (1);
          \draw[->] (1) to (4);
          \draw[->] (3) to (2);
          \draw[->] (4) to (2);
          \draw[->] (4) to (3);
          \node (label) at (270:1.5cm) {$F^2_5$};
        \end{scope}
        \begin{scope}[xshift=3cm, yshift=-3.0cm]
          \node[fill=black, rectangle, inner sep=0pt,minimum size=7pt] at (180:1cm) (1) {};
          \node[fill=black, rectangle, inner sep=0pt,minimum size=7pt] at (270:1cm) (2) {};
          \node[fill=black, rectangle, inner sep=0pt,minimum size=7pt] at (0:1cm) (3) {};
          \node[fill=black, circle, inner sep=0pt,minimum size=7pt] at (90:1cm) (4) {};
          \draw[->] (2) to (1);
          \draw[->] (3) to (1);
          \draw[->] (1) to (4);
          \draw[->] (3) to (2);
          \draw[->] (4) to (2);
          \draw[->] (3) to (4);
          \node (label) at (270:1.5cm) {$F^2_6$};
        \end{scope}
        \begin{scope}[xshift=6cm, yshift=-3.0cm]
          \node[fill=black, rectangle, inner sep=0pt,minimum size=7pt] at (180:1cm) (1) {};
          \node[fill=black, rectangle, inner sep=0pt,minimum size=7pt] at (270:1cm) (2) {};
          \node[fill=black, rectangle, inner sep=0pt,minimum size=7pt] at (0:1cm) (3) {};
          \node[fill=black, circle, inner sep=0pt,minimum size=7pt] at (90:1cm) (4) {};
          \draw[->] (2) to (1);
          \draw[->] (3) to (1);
          \draw[->] (1) to (4);
          \draw[->] (3) to (2);
          \draw[->] (2) to (4);
          \draw[->] (4) to (3);
          \node (label) at (270:1.5cm) {$F^2_7$};
        \end{scope}
        \begin{scope}[xshift=9cm, yshift=-3.0cm]
          \node[fill=black, rectangle, inner sep=0pt,minimum size=7pt] at (180:1cm) (1) {};
          \node[fill=black, rectangle, inner sep=0pt,minimum size=7pt] at (270:1cm) (2) {};
          \node[fill=black, rectangle, inner sep=0pt,minimum size=7pt] at (0:1cm) (3) {};
          \node[fill=black, circle, inner sep=0pt,minimum size=7pt] at (90:1cm) (4) {};
          \draw[->] (2) to (1);
          \draw[->] (3) to (1);
          \draw[->] (1) to (4);
          \draw[->] (3) to (2);
          \draw[->] (2) to (4);
          \draw[->] (3) to (4);
          \node (label) at (270:1.5cm) {$F^2_8$};
        \end{scope}
    \end{tikzpicture}
    \end{center}
Finally, we have the elements of $\mathcal{F}_4(C_3)$, labelled $F^3_i$ for $1\leq i\leq 8$ like so:
\begin{center}
    \begin{tikzpicture}[scale=1]
        \begin{scope}[xshift=0cm, yshift=0.0cm]
          \node[fill=black, rectangle, inner sep=0pt,minimum size=7pt] at (180:1cm) (1) {};
          \node[fill=black, rectangle, inner sep=0pt,minimum size=7pt] at (270:1cm) (2) {};
          \node[fill=black, rectangle, inner sep=0pt,minimum size=7pt] at (0:1cm) (3) {};
          \node[fill=black, circle, inner sep=0pt,minimum size=7pt] at (90:1cm) (4) {};
          \draw[->] (1) to (2);
          \draw[->] (2) to (3);
          \draw[->] (3) to (1);
          \draw[->] (4) to (1);
          \draw[->] (4) to (2);
          \draw[->] (4) to (3);
          \node (label) at (270:1.5cm) {$F^3_1$};
        \end{scope}
        \begin{scope}[xshift=3cm, yshift=0.0cm]
          \node[fill=black, rectangle, inner sep=0pt,minimum size=7pt] at (180:1cm) (1) {};
          \node[fill=black, rectangle, inner sep=0pt,minimum size=7pt] at (270:1cm) (2) {};
          \node[fill=black, rectangle, inner sep=0pt,minimum size=7pt] at (0:1cm) (3) {};
          \node[fill=black, circle, inner sep=0pt,minimum size=7pt] at (90:1cm) (4) {};
          \draw[->] (1) to (2);
          \draw[->] (2) to (3);
          \draw[->] (3) to (1);
          \draw[->] (4) to (1);
          \draw[->] (4) to (2);
          \draw[->] (3) to (4);
          \node (label) at (270:1.5cm) {$F^3_2$};
        \end{scope}
        \begin{scope}[xshift=6cm, yshift=0.0cm]
          \node[fill=black, rectangle, inner sep=0pt,minimum size=7pt] at (180:1cm) (1) {};
          \node[fill=black, rectangle, inner sep=0pt,minimum size=7pt] at (270:1cm) (2) {};
          \node[fill=black, rectangle, inner sep=0pt,minimum size=7pt] at (0:1cm) (3) {};
          \node[fill=black, circle, inner sep=0pt,minimum size=7pt] at (90:1cm) (4) {};
          \draw[->] (1) to (2);
          \draw[->] (2) to (3);
          \draw[->] (3) to (1);
          \draw[->] (4) to (1);
          \draw[->] (2) to (4);
          \draw[->] (4) to (3);
          \node (label) at (270:1.5cm) {$F^3_3$};
        \end{scope}
        \begin{scope}[xshift=9cm, yshift=0.0cm]
          \node[fill=black, rectangle, inner sep=0pt,minimum size=7pt] at (180:1cm) (1) {};
          \node[fill=black, rectangle, inner sep=0pt,minimum size=7pt] at (270:1cm) (2) {};
          \node[fill=black, rectangle, inner sep=0pt,minimum size=7pt] at (0:1cm) (3) {};
          \node[fill=black, circle, inner sep=0pt,minimum size=7pt] at (90:1cm) (4) {};
          \draw[->] (1) to (2);
          \draw[->] (2) to (3);
          \draw[->] (3) to (1);
          \draw[->] (4) to (1);
          \draw[->] (2) to (4);
          \draw[->] (3) to (4);
          \node (label) at (270:1.5cm) {$F^3_4$};
        \end{scope}
        \begin{scope}[xshift=0cm, yshift=-3cm]
          \node[fill=black, rectangle, inner sep=0pt,minimum size=7pt] at (180:1cm) (1) {};
          \node[fill=black, rectangle, inner sep=0pt,minimum size=7pt] at (270:1cm) (2) {};
          \node[fill=black, rectangle, inner sep=0pt,minimum size=7pt] at (0:1cm) (3) {};
          \node[fill=black, circle, inner sep=0pt,minimum size=7pt] at (90:1cm) (4) {};
          \draw[->] (1) to (2);
          \draw[->] (2) to (3);
          \draw[->] (3) to (1);
          \draw[->] (1) to (4);
          \draw[->] (4) to (2);
          \draw[->] (4) to (3);
          \node (label) at (270:1.5cm) {$F^3_5$};
        \end{scope}
        \begin{scope}[xshift=3cm, yshift=-3cm]
          \node[fill=black, rectangle, inner sep=0pt,minimum size=7pt] at (180:1cm) (1) {};
          \node[fill=black, rectangle, inner sep=0pt,minimum size=7pt] at (270:1cm) (2) {};
          \node[fill=black, rectangle, inner sep=0pt,minimum size=7pt] at (0:1cm) (3) {};
          \node[fill=black, circle, inner sep=0pt,minimum size=7pt] at (90:1cm) (4) {};
          \draw[->] (1) to (2);
          \draw[->] (2) to (3);
          \draw[->] (3) to (1);
          \draw[->] (1) to (4);
          \draw[->] (4) to (2);
          \draw[->] (3) to (4);
          \node (label) at (270:1.5cm) {$F^3_6$};
        \end{scope}
        \begin{scope}[xshift=6cm, yshift=-3cm]
          \node[fill=black, rectangle, inner sep=0pt,minimum size=7pt] at (180:1cm) (1) {};
          \node[fill=black, rectangle, inner sep=0pt,minimum size=7pt] at (270:1cm) (2) {};
          \node[fill=black, rectangle, inner sep=0pt,minimum size=7pt] at (0:1cm) (3) {};
          \node[fill=black, circle, inner sep=0pt,minimum size=7pt] at (90:1cm) (4) {};
          \draw[->] (1) to (2);
          \draw[->] (2) to (3);
          \draw[->] (3) to (1);
          \draw[->] (1) to (4);
          \draw[->] (2) to (4);
          \draw[->] (4) to (3);
          \node (label) at (270:1.5cm) {$F^3_7$};
        \end{scope}
        \begin{scope}[xshift=9cm, yshift=-3cm]
          \node[fill=black, rectangle, inner sep=0pt,minimum size=7pt] at (180:1cm) (1) {};
          \node[fill=black, rectangle, inner sep=0pt,minimum size=7pt] at (270:1cm) (2) {};
          \node[fill=black, rectangle, inner sep=0pt,minimum size=7pt] at (0:1cm) (3) {};
          \node[fill=black, circle, inner sep=0pt,minimum size=7pt] at (90:1cm) (4) {};
          \draw[->] (1) to (2);
          \draw[->] (2) to (3);
          \draw[->] (3) to (1);
          \draw[->] (1) to (4);
          \draw[->] (2) to (4);
          \draw[->] (3) to (4);
          \node (label) at (270:1.5cm) {$F^3_8$};
        \end{scope}
    \end{tikzpicture}
    \end{center}

Next, we require a multiplication operation on pairs of flags.

\begin{defn}
Let $F$ be a tournament with vertex set $\{1,\dots,r\}$. Say that two flags $(F_1,r)$ and $(F_2,r)$ are \emph{$R$-compatible} if $v(F_1)=v(F_2)$ and, for each $i\in\{1,2\}$, the subtournament of $F_i$ induced by the roots is equal to $R$. Say that $(F_1,r)$ and $(F_2,r)$ are \emph{compatible} if they are $R$-compatible for some tournament $R$ of order $r$.
\end{defn}

\begin{defn}
Let $F$ be a tournament with vertex set $\{1,\dots,r\}$. Given $R$-compatible flags $(F_1,r)$ and $(F_2,r)$ and a tournamenton $W$, define $t_r(F_1\cdot F_2,W):[0,1]^r\to[0,1]$ by
\[t_r(F_1\cdot F_2,W)(x_1,\dots,x_r):=\begin{cases}\frac{t_r(F_1,W)(x_1,\dots,x_r)\cdot t_r(F_2,W)(x_1,\dots,x_r)}{t_r(R,W)(x_1,\dots,x_r)}& \text{if }t_r(R,W)(x_1,\dots,x_r)\neq 0,\\ 0&\text{otherwise}.\end{cases}\]
\end{defn}

A key idea in the flag algebra method is that, for two $R$-compatible flags $(F_1,r)$ and $(F_2,r)$, the integral of $t_r(F_1\cdot F_2,W)(x_1,\dots,x_r)$ over $[0,1]^r$ can be expressed as a linear combination of $d(J,W)$ over all tournaments $J$ with $v(F_1)+v(F_2)-v(F)$ vertices. We have actually already seen an instance of this, in disguise, in the proof of Proposition~\ref{prop:regularTransitiveW}. If we let $F_1=F_2$ be the tournament on vertex set $\{1,2\}$ where $1$ is the source, then $t_1(F_1\cdot F_2,W)(x_1)=\left(d_W^+(x_1)\right)^2$. Therefore, \eqref{eq:outdeg} expresses $\int_0^1t_1(F_1\cdot F_2,W)(x_1)dx_1$ as a linear combination of $d(J,W)$ over all tournaments $J$ with $3$ vertices. The same sort of ``handshaking'' argument applies to any other product of compatible flags to yield the following lemma.

\begin{lem}
\label{lem:flagMult}
Let $F$ be a tournament with vertex set $\{1,\dots,r\}$, let $(F_1,r)$ and $(F_2,r)$ be $R$-compatible flags and let $m\geq v(F_1)+v(F_2)-v(F)$. Then there exist constants $b_r(F_1,F_2;J)$ for each tournament $J$ on $m$ vertices such that 
\[\int_0^1\cdots\int_0^1t_r(F_1\cdot F_2,W)(x_1,\dots,x_r)dx_1\cdots dx_r = \sum_{J:v(J)=m}b_r(F_1,F_2;J)\cdot d(J,W)\]
for every tournamenton $W$, where the sum on the right side is over all $m$-vertex tournaments up to isomorphism.
\end{lem}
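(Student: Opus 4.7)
The plan is to interpret $\int_{[0,1]^r}t_r(F_1\cdot F_2,W)\,dx_1\cdots dx_r$ as the homomorphism density of a ``glued'' digraph $G$, express $t(G,W)$ as a sum of tournament densities by summing over orientations of the missing edges, and then apply Lemma~\ref{lem:t to tinj} to convert $t$ into $d$.

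For the first step, I would unfold the definition of $t_r(F_i,W)$ and factor out the root contribution. Since every arc in $A(R)$ involves only the root variables $x_1,\dots,x_r$, we have
\[t_r(F_i,W)(x_1,\dots,x_r)=\Big(\prod_{jk\in A(R)}W(x_j,x_k)\Big)\cdot I_i(x_1,\dots,x_r),\]
where $I_i$ is the integral of $\prod_{jk\in A(F_i)\setminus A(R)}W$ over the non-root variables of $F_i$. Since $R$ has only root vertices, $t_r(R,W)=\prod_{jk\in A(R)}W$, so the quotient in the definition of $t_r(F_1\cdot F_2,W)$ simplifies to $\prod_{jk\in A(R)}W\cdot I_1\cdot I_2$ wherever $t_r(R,W)\ne 0$; this expression also vanishes when $t_r(R,W)=0$, which matches the ``otherwise'' case in the definition. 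Using disjoint sets of dummy variables for the non-roots of $F_1$ and $F_2$ and Fubini, integration over $(x_1,\dots,x_r)\in[0,1]^r$ yields
\[\int_{[0,1]^r}t_r(F_1\cdot F_2,W)\,dx_1\cdots dx_r=t(G,W),\]
where $G$ is the digraph on $n:=v(F_1)+v(F_2)-r$ vertices obtained from $F_1$ and $F_2$ by identifying their roots (keeping the non-root vertices disjoint), and $t(G,W)$ is the density integral extended from tournaments to arbitrary digraphs.

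For the second step, let $\mathcal{G}$ be the set of all tournaments obtained from $G$ by orienting each of the $(v(F_1)-r)(v(F_2)-r)$ edges between a non-root of $F_1$ and a non-root of $F_2$ in one of the two possible ways (keeping every existing arc of $G$). The same edge-pairing induction used inside the proof of Lemma~\ref{lem:sumAllOrientations} gives
\[t(G,W)=\sum_{J\in\mathcal{G}}t(J,W),\]
since summing the two possible orientations of any missing edge $uv$ produces the factor $W(x_u,x_v)+W(x_v,x_u)=1$, which eliminates that edge from the integral. Finally, Lemma~\ref{lem:t to tinj} applied to each $J\in\mathcal{G}$ gives $t(J,W)=\sum_{K:v(K)=m}t_{\inj}(J,K)\,d(K,W)$ for any $m\geq n$ (noting $v(J)=n$), so interchanging the two sums and setting $b_r(F_1,F_2;K):=\sum_{J\in\mathcal{G}}t_{\inj}(J,K)$ completes the proof.

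The only real ``obstacle'' is bookkeeping: one has to keep the disjointness of the non-root index sets of $F_1$ and $F_2$ straight when assembling $G$, and be mildly careful about the zero set of $t_r(R,W)$ in the first step (this is immediate because both sides of the factorization vanish there). No technique beyond those appearing in the proofs of Lemmas~\ref{lem:sumAllOrientations} and~\ref{lem:t to tinj} is required.
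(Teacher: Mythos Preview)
Your proposal is correct and follows the same approach the paper indicates: the paper does not give a formal proof of Lemma~\ref{lem:flagMult} but instead refers back to the ``handshaking'' argument in Proposition~\ref{prop:regularTransitiveW} and then works a concrete example (for $F_1^1\cdot F_3^1$), which amounts to exactly your three steps---unfold and glue to obtain $t(G,W)$, sum over orientations of the missing non-root/non-root edges using $W(x,y)+W(y,x)=1$, and pass from $t$ to $d$. Your write-up is in fact more complete than the paper's, since you explicitly invoke Lemma~\ref{lem:t to tinj} to handle the case $m>v(F_1)+v(F_2)-r$, which the paper's example (with $m=4$) does not illustrate.
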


We provide an example to illustrate how to compute the coefficients $b_r(F_1,F_2;J)$ in Lemma~\ref{lem:flagLem}. We also remark that all of the coefficients that are needed for our proofs are listed in Appendix~\ref{app:coeffs} for convenience. Consider the flags $F_1^1$ and $F_3^1$ from $\mathcal{F}_3(TT_2)$ as defined earlier in this section. Then $\int_0^1\int_0^1t(F_1^1\cdot F_3^1,W)(x_1,x_2)dx_1dx_2$ can be written as
\[\int_0^1\int_0^1W(x_2,x_1)\left(\int_0^1W(x_3,x_1)W(x_3,x_2)dx_3\right)\left(\int_0^1W(x_3,x_2)W(x_1,x_3)dx_3\right)dx_1dx_2\]
which, by a change of variables, is 
\[=\int_0^1\int_0^1W(x_2,x_1)\left(\int_0^1W(x_3,x_1)W(x_3,x_2)dx_3\right)\left(\int_0^1W(x_4,x_2)W(x_1,x_4)dx_4\right)dx_1dx_2\]
\[=\int_0^1\int_0^1\int_0^1\int_0^1W(x_2,x_1)W(x_3,x_1)W(x_3,x_2)W(x_4,x_2)W(x_1,x_4)dx_1dx_2dx_3dx_4\]
\[=\int_{[0,1]^4}W(x_2,x_1)W(x_3,x_1)W(x_3,x_2)W(x_4,x_2)W(x_1,x_4)\left[W(x_4,x_3)+W(x_3,x_4)\right]dx_1dx_2dx_3dx_4.\]
Expanding the sum in the above expression yields
\begin{equation}\label{eq:example}\int_0^1\int_0^1t(F_1^1\cdot F_3^1,W)(x_1,x_2)dx_1dx_2=t(H_5,W) +t(H_6,W).\end{equation}
Now, by definition of $d(\cdot,W)$, the right side of \eqref{eq:example} can be rewritten as 
\[\frac{3}{24}d(H_5,W) + \frac{1}{24}d(H_6,W).\]
Thus, for these two flags, we have $b_2(F_1^1,F_3^1;H_5)=\frac{3}{24}$ and $b_2(F_1^1,F_3^1;H_6)=\frac{1}{24}$. Also, $b_2(F_1^1,F_3^1;J)=0$ for any $4$-vertex tournament not isomorphic to $H_5$ or $H_6$. For any flags $(F_1,r)$ and $(F_2,r)$ and tournamant $J$ with at least $v(F_1)+v(F_2)-r$ vertices, the coefficient $b_r(F_1,F_2;J)$ in Lemma~\ref{lem:flagMult} can be computed explicitly via a similar procedure. As mentioned earlier, all of the coefficients that we require in our proofs are included in Appendix~\ref{app:coeffs}. 

Finally, we state the key lemma that will be used in the proofs of Theorems~\ref{th:H10}--\ref{th:H14}. A symmetric $t\times t$ matrix $M$ is said to be \emph{positive semidefinite}, written $M\succcurlyeq 0$, if all of its eigenvalues are non-negative. Equivalently, $M\succcurlyeq0$ if and only if $\sum_{i=1}^t\sum_{j=1}^tM_{i,j}x_ix_j\geq 0$ for any $x_1,\dots,x_t\in\mathbb{R}$. 

\begin{lem}
\label{lem:flagLem}
Let $m$ be an integer and $H\in\mathbb{R}[\mathcal{T}_{\leq m}]$. Let $\ell,t_1,\dots,t_\ell,r_1,\dots,r_\ell,k_1,\dots,k_\ell$ be positive integers such that $k_q\geq r_q+1$ and $2k_q-r_q\leq m$ for all $1\leq q\leq \ell$. For each $1\leq q\leq\ell$, let $(F^q_1,r_q),\dots,(F^q_{t_q},r_q)$ be pairwise compatible flags on vertex set $\{1,\dots,k_q\}$ and $A_q$ be a positive semidefinite $t_q\times t_q$ matrix. Then every tournamenton $W$ satisfies
\[t(H,W)\geq \min_{J: v(J)=m}\left\{t_{\inj}(H,J) -\sum_{q=1}^\ell\sum_{i=1}^{t_q}\sum_{j=1}^{t_q}b_{r_q}(F_i\cdot F_j;J)\cdot (A_q)_{i,j}\right\}.\]
Moreover, if $W$ is a tournamenton such that equality holds, then, for every $1\leq q\leq \ell$ and almost every $(x_1,\dots,x_{r_q})\in[0,1]^{r_q}$, the vector
\[\left(t_{r_q}(F_1^q,W)(x_1,\dots,x_{r_q}),\dots,t_{r_q}(F_{t_q}^q,W)(x_1,\dots,x_{r_q})\right)\]
is in the kernel of $A_q$. 
\end{lem}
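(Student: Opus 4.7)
The plan is to combine three facts: Lemma~\ref{lem:t to tinj} to rewrite $t(H,W)$ as a convex combination of the induced densities $d(J,W)$ with $v(J)=m$; the positive semidefiniteness of each $A_q$ to produce a pointwise non-negative combination of flag products; and Lemma~\ref{lem:flagMult} to convert the integrals of these products back into linear combinations of the $d(J,W)$. The main inequality will then be obtained by subtracting a non-negative quantity from $t(H,W)$ and taking the minimum over $J$, exploiting that $\sum_{J:v(J)=m}d(J,W)=1$ and $d(J,W)\geq 0$.

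In more detail, I would first fix $q$ and observe that, for almost every $(x_1,\dots,x_{r_q})\in[0,1]^{r_q}$ at which $t_{r_q}(R_q,W)(x_1,\dots,x_{r_q})\neq 0$, the expression
\[\sum_{i=1}^{t_q}\sum_{j=1}^{t_q}(A_q)_{i,j}\,t_{r_q}(F^q_i\cdot F^q_j,W)(x_1,\dots,x_{r_q})=\frac{1}{t_{r_q}(R_q,W)(x_1,\dots,x_{r_q})}\sum_{i,j}(A_q)_{i,j}\,t_{r_q}(F^q_i,W)\,t_{r_q}(F^q_j,W)\]
is non-negative, since $A_q\succcurlyeq 0$ and the denominator is non-negative. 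At points where the denominator vanishes, the expression is defined to be zero, so it is non-negative everywhere. Integrating over $[0,1]^{r_q}$ and applying Lemma~\ref{lem:flagMult} with the choice of $m$ (note that the hypothesis $2k_q-r_q\leq m$ guarantees that $m$ is at least $v(F^q_i)+v(F^q_j)-r_q$) yields
\[0\leq \int_{[0,1]^{r_q}}\sum_{i,j}(A_q)_{i,j}\,t_{r_q}(F^q_i\cdot F^q_j,W)=\sum_{J:v(J)=m}\left(\sum_{i,j}(A_q)_{i,j}\,b_{r_q}(F^q_i,F^q_j;J)\right)d(J,W).\]
Summing over $q$ and subtracting this non-negative quantity from the identity $t(H,W)=\sum_{J:v(J)=m}t_{\inj}(H,J)\,d(J,W)$ from Lemma~\ref{lem:t to tinj} gives
\[t(H,W)\geq \sum_{J:v(J)=m}\left(t_{\inj}(H,J)-\sum_{q=1}^{\ell}\sum_{i,j}(A_q)_{i,j}\,b_{r_q}(F^q_i,F^q_j;J)\right)d(J,W),\]
and since the $d(J,W)$ are non-negative and sum to $1$ by Lemma~\ref{lem:sumToOne}, this is at least the stated minimum.

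For the equality case, if equality holds then the subtracted integral $\sum_q\int_{[0,1]^{r_q}}\sum_{i,j}(A_q)_{i,j}t_{r_q}(F^q_i\cdot F^q_j,W)$ must vanish (and the minimum must be attained by every $J$ with $d(J,W)>0$). Since each summand in $q$ is non-negative almost everywhere by the argument above, this forces, for each $q$ and almost every $(x_1,\dots,x_{r_q})$, the quadratic form $\sum_{i,j}(A_q)_{i,j}\,t_{r_q}(F^q_i,W)(x_1,\dots,x_{r_q})\,t_{r_q}(F^q_j,W)(x_1,\dots,x_{r_q})$ to vanish. Since $A_q$ is positive semidefinite, this means the vector $(t_{r_q}(F^q_i,W)(x_1,\dots,x_{r_q}))_{i=1}^{t_q}$ lies in the kernel of $A_q$ at almost every point where $t_{r_q}(R_q,W)\neq 0$; at points where $t_{r_q}(R_q,W)=0$, Lemma~\ref{lem:sumAllOrientations} together with the bound $t_{r_q}(F^q_i,W)\leq t_{r_q}(R_q,W)$ forces the vector to be identically zero, which is trivially in the kernel.

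The only mildly delicate step will be handling the points where the denominator $t_{r_q}(R_q,W)$ vanishes in both the inequality and the equality characterization, but the definition of $t_r(F_1\cdot F_2,W)$ as zero at such points together with the pointwise comparison $t_{r_q}(F^q_i,W)\leq t_{r_q}(R_q,W)$ should make this routine. Everything else is bookkeeping built on the already-established lemmas.
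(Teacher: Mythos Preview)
Your proposal is correct and follows essentially the same approach as the paper: rewrite $t(H,W)$ via Lemma~\ref{lem:t to tinj}, use positive semidefiniteness of each $A_q$ to get a pointwise non-negative expression, integrate and apply Lemma~\ref{lem:flagMult}, then bound the resulting convex combination by its minimum coefficient using Lemma~\ref{lem:sumToOne}. Your treatment of the equality case is in fact slightly more careful than the paper's, since you explicitly handle the points where $t_{r_q}(R_q,W)=0$ via the comparison $t_{r_q}(F_i^q,W)\leq t_{r_q}(R_q,W)$.
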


\begin{proof}
Let $W$ be any tournamenton. By Lemma~\ref{lem:t to tinj}, 
\begin{equation}\label{eq:ttotinj}
t(H,W) = \sum_{J: v(J)=m}t_{\inj}(H,J)d(J,W).
\end{equation}
We claim that the following holds for any $1\leq q\leq \ell$ and $(x_1,\dots,x_{r_q})\in[0,1]^{r_q}$:
\begin{equation}\label{eq:psd}\sum_{i=1}^{t_q} \sum_{j=1}^{t_q}(A_q)_{i,j}\cdot t(F_i^q\cdot F_j^q,W)(x_1,\dots,x_{r_q}) \geq 0.\end{equation}
Indeed, if $(x_1,\dots,x_{r_q})\in[0,1]^{r_q}$ are such that $t(R^q,W)(x_1,\dots,x_{r_q})=0$, where $R^q$ is the subgraph of any of the flags $F_1^q,\dots,F_{t_q}^q$ induced by $\{1,\dots,r_q\}$, then every term of the sum is zero. On the other hand, if $t(R^q,W)(x_1,\dots,x_{r_q})\neq 0$, then applying the definition of $t(F_i^q\cdot F_j^q,W)(x_1,\dots,x_{r_q})$ gives us that the sum is non-negative due to the fact that $A_q$ is positive semidefinite. Now, if we integrate out all of the variables in \eqref{eq:psd}, we get 
\[\sum_{J: v(J)=m}\sum_{q=1}^\ell\sum_{i=1}^{t_q}\sum_{j=1}^{t_q}(A_q)_{i,j}\cdot b_{r_q}(F_i\cdot F_j;J)\cdot d(J,W) \geq 0.\]
Putting this together with \eqref{eq:ttotinj} gives us 
\[t(H,W)\geq \sum_{J: v(J)=m}\left(t_{\inj}(H,J) - \sum_{q=1}^\ell\sum_{i=1}^{t_q}\sum_{j=1}^{t_q}(A_q)_{i,j}\cdot b_{r_q}(F_i\cdot F_j;J)\right)\cdot d(J,W).\]
Finally, by \eqref{lem:sumToOne}, we have that $\sum_{J: v(J)=m}d(J,W)=1$ and so the quantity on the right side of the above inequality is bounded below by the minimum of the coefficient of $d(J,W)$ over all $J$. This is precisely the bound in the lemma. 

Now, let us discuss the case of equality. If, for some $1\leq q\leq \ell$, there is a positive measure subset of $[0,1]^{r_q}$ such that the vector
\[\left(t_{r_q}(F_1^q,W)(x_1,\dots,x_{r_q}),\dots,t_{r_q}(F_{t_q}^q,W)(x_1,\dots,x_{r_q})\right)\]
is not in the kernel of $A_q$, then the inequality \eqref{eq:psd} is strict for a positive measure of points. This, in turn, translates to a gap in the final bound. So, if equality holds, then such vectors must be in the kernel of $A_q$ for each $1\leq q\leq\ell$ and almost all $(x_1,\dots,x_{r_q})$. 
\end{proof}

Without further delay, we use the flag algebra method to prove Theorems~\ref{th:H10}--\ref{th:H14}.

\begin{proof}[Proof of Theorem~\ref{th:H10}]
We apply Lemma~\ref{lem:flagLem} with the following matrices:
\[A_2= \frac{1}{245}\begin{bmatrix}
4724&  -1883& 1081&  -4598& 3827&  -293&  1390&  -4248\\
-1883& 6512&  -4787& 3150&   39&    378&  -2965& -444 \\
1081&  -4787& 3856&  -2274& -371&  -313&  2010&   798 \\
-4598& 3150&  -2274& 5490&  -3658&  156&  -1734& 3468 \\
3827&   39&   -371&  -3658& 4600&  -420&   945&  -4962\\
-293&   378&  -313&   156&  -420&   606&  -660&   546 \\
1390&  -2965& 2010&  -1734&  945&  -660&  2376&  -1362\\
-4248& -444&   798&  3468&  -4962&  546&  -1362& 6204 
\end{bmatrix}\]
\[A_3= \frac{1}{245}\begin{bmatrix}
  612&    162&    162&   -162&    162&   -162&   -162&   -612 \\
  162&   2082&    342&    252&    342&    -24&   -1050&  -2106\\
  162&    342&   2082&    -24&    342&   -1050&   252&   -2106\\
 -162&    252&    -24&   1554&   -1050&  -186&   -186&   -198 \\
  162&    342&    342&   -1050&  2082&    252&    -24&   -2106\\
 -162&    -24&   -1050&  -186&    252&   1554&   -186&   -198 \\
 -162&   -1050&   252&   -186&    -24&   -186&   1554&   -198 \\
 -612&   -2106&  -2106&  -198&   -2106&  -198&   -198&   7524   \end{bmatrix}\]
Both of these matrices are positive semidefinite. We need that, for every tournament $J$ on $5$ vertices, the expression
\[-8\cdot t_{\inj}(C_3,J)-\frac{1}{4}\cdot1024\cdot t_{\inj}(H_{10},J) -\sum_{i=1}^8\sum_{j=1}^8b_2(F^2_i\cdot F^2_j; J)\cdot (A_2)_{i,j} -\sum_{i=1}^8\sum_{j=1}^8b_2(F^3_i\cdot F^3_j; J)\cdot (A_3)_{i,j}\]
is equal to $-5/4$. The values of the coefficients can be found in the appendix at the end of the paper. We will only compute this expression for two examples of $J$ for the sake of demonstration. First, in the case that $J=H_8$, it becomes
\[-8\cdot 0-\frac{1}{4}\cdot1024\cdot 0 - \frac{2}{120}(A_2)_{1,1}-\frac{1}{120}(A_2)_{1,2}-\frac{1}{120}(A_2)_{1,4}-\frac{1}{120}(A_2)_{1,8}-\frac{1}{120}(A_2)_{2,1}-\frac{2}{120}(A_2)_{2,2}\]
\[-\frac{1}{120}(A_2)_{2,4}-\frac{1}{120}(A_2)_{2,8}-\frac{1}{120}(A_2)_{4,1}-\frac{1}{120}(A_2)_{4,2}-\frac{2}{120}(A_2)_{4,4}-\frac{1}{120}(A_2)_{4,8}-\frac{1}{120}(A_2)_{8,1}-\frac{1}{120}(A_2)_{8,2}\]
\[-\frac{1}{120}(A_2)_{8,4}-\frac{2}{120}(A_2)_{8,8}\]
which, upon plugging the appropriate values from $A_2$, becomes
\[-8\cdot 0-\frac{1}{4}\cdot1024\cdot 0 - \frac{2}{120}\cdot\frac{4724}{245}+\frac{1}{120}\cdot\frac{1883}{245}+\frac{1}{120}\cdot\frac{4598}{245}+\frac{1}{120}\cdot\frac{4248}{245}+\frac{1}{120}\cdot\frac{1883}{245}-\frac{2}{120}\cdot\frac{6512}{245}\]
\[-\frac{1}{120}\cdot\frac{3150}{245}+\frac{1}{120}\cdot\frac{444}{245}+\frac{1}{120}\cdot\frac{4598}{245}-\frac{1}{120}\cdot\frac{3150}{245}-\frac{2}{120}\cdot\frac{5490}{245}-\frac{1}{120}\cdot\frac{3468}{245}+\frac{1}{120}\cdot\frac{4248}{245}+\frac{1}{120}\cdot\frac{444}{245}\]
\[-\frac{1}{120}\cdot\frac{3468}{245}-\frac{2}{120}\cdot\frac{6204}{245}=-\frac{5}{4}\]
As another example, consider $J=H_{12}$. The expression becomes 
\[-8\cdot \frac{12}{60}-\frac{1}{4}\cdot1024\cdot 0 - \frac{3}{120}(A_2)_{3,5}-\frac{3}{120}(A_2)_{5,3}-\frac{3}{120}(A_2)_{6,7}-\frac{3}{120}(A_2)_{7,6}-\frac{3}{120}(A_3)_{1,8}\]
\[-\frac{3}{120}(A_3)_{2,6}-\frac{3}{120}(A_3)_{3,4}-\frac{3}{120}(A_3)_{4,3}-\frac{3}{120}(A_3)_{5,7}-\frac{3}{120}(A_3)_{6,2}-\frac{3}{120}(A_3)_{7,5}-\frac{3}{120}(A_3)_{8,1}\]
which evaluates to
\[-8\cdot \frac{12}{60}-\frac{1}{4}\cdot1024\cdot 0 + \frac{3}{120}\cdot\frac{371}{245}+\frac{3}{120}\cdot\frac{371}{245}+\frac{3}{120}\cdot\frac{660}{245}+\frac{3}{120}\cdot\frac{660}{245}+\frac{3}{120}\cdot\frac{612}{245}\]
\[+\frac{3}{120}\cdot\frac{24}{245}+\frac{3}{120}\cdot\frac{24}{245}+\frac{3}{120}\cdot\frac{24}{245}+\frac{3}{120}\cdot\frac{24}{245}+\frac{3}{120}\cdot\frac{24}{245}+\frac{3}{120}\cdot\frac{24}{245}+\frac{3}{120}\cdot\frac{612}{245}=-\frac{5}{4}.\]

Now, suppose that $W$ is a tournamenton such that
\[8\cdot t(C_3,W) + \frac{1}{4}\cdot1024\cdot t(H_{10},W) =\frac{5}{4}\]
and let us show that $W$ is equal to $1/2$ almost everywhere. The kernel of $A_2$ is spanned by $(1,1,1,1,1,1,1,1)^T$. By the ``moreover'' part of Lemma~\ref{lem:flagLem}, we must have that 
\[\label{eq:allSameT}t_3(F_1^2,W)(x_1,x_2,x_3)=t_3(F_2^2,W)(x_1,x_2,x_3)=\cdots =t_3(F_8^2,W)(x_1,x_2,x_3).\]
By Lemma~\ref{lem:sumAllOrientations}, we have
\[\sum_{i=1}^8t(F_i^2,W)(x_1,x_2,x_3) = t(TT_3,W)(x_1,x_2,x_3)\] 
for almost all $(x_1,x_2,x_3)\in [0,1]^3$. Putting these two facts together, we must have $t_3(F_1^2,W)(x_1,x_2,x_3)=\frac{1}{8}t(TT_3,W)(x_1,x_2,x_3)$ and $t_3(F_7^2,W)(x_1,x_2,x_3)=\frac{1}{8}t(TT_3,W)(x_1,x_2,x_3)$ 
for almost all $(x_1,x_2,x_3)\in[0,1]^3$. Integrating out these two equalities over all $(x_1,x_2,x_3)\in[0,1]^3$ yields $t(TT_4,W)=\frac{1}{8}t(TT_3,W)$ and $t(C_4,W)=\frac{1}{8}t(TT_3,W)$, respectively. So, we get that $W=1/2$ almost everywhere by Lemma~\ref{lem:uniqueness}.
\end{proof}

\begin{proof}[Proof of Theorem~\ref{th:H11}]
We apply Lemma~\ref{lem:flagLem} with the following matrices:
\[A_2= \frac{1}{7}\begin{bmatrix}
257 & -130 & -214 &  127 &  -57 &  201 &  40 &  -224 \\
-130 &  199 &  123 & -179 &  72 &  -130 &  -86 &  131 \\
-214 &  123 &  210 & -134 &  45 &  -190 &  -48 &  208 \\
127 & -179 & -134 &  204 &  -79 &  120 &  77 &  -136 \\
-57 &  72 &   45 &   -79 &  58 &   -44 &  -30 &  35  \\
201 & -130 & -190 &  120 &  -44 &  219 &  51 &  -227 \\
40 &   -86 &  -48 &  77 &   -30 &  51 &   59 &   -63 \\
-224 &  131 &  208 & -136 &  35 &  -227 &  -63 &  276
\end{bmatrix}\]
\[A_3= \frac{1}{7}\begin{bmatrix}
184& -31& -31& 22&  -31& 22&  22& -157 \\
-31& 62&  -10& -26& -10&  4&  -11& 22  \\
-31& -10& 62&   4&  -10& -11& -26& 22  \\
22&  -26&  4&  61&  -11& -11& -11& -28 \\
-31& -10& -10& -11& 62&  -26&  4&  22  \\
22&   4&  -11& -11& -26& 61&  -11& -28 \\
22&  -11& -26& -11&  4&  -11& 61&  -28 \\
-157& 22&  22&  -28& 22&  -28& -28& 175   \end{bmatrix}\]

These matrices are positive semidefinite. We need that, for every tournament $J$ on $5$ vertices, the expression
\[-8\cdot t_{\inj}(C_3,J)-\frac{1}{4}\cdot1024\cdot t_{\inj}(H_{11},J) -\sum_{i=1}^8\sum_{j=1}^8b_2(F^2_i\cdot F^2_j; J)\cdot (A_2)_{i,j} -\sum_{i=1}^8\sum_{j=1}^8b_2(F^3_i\cdot F^3_j; J)\cdot (A_3)_{i,j}\]
is equal to $-5/4$. The values of the coefficients can be found in the appendix at the end of the paper. We will only compute this expression in one example for the sake of demonstration. In the case $J=H_{19}$, it becomes
\[-8\cdot \frac{15}{60}-\frac{1}{4}\cdot1024\cdot 0 - \frac{5}{120}(A_2)_{5,7} - \frac{5}{120}(A_2)_{7,5}-\frac{5}{120}(A_3)_{2,7}-\frac{5}{120}(A_3)_{3,6}-\frac{5}{120}(A_3)_{4,5}-\frac{5}{120}(A_3)_{5,4}\]
\[-\frac{5}{120}(A_3)_{6,3}-\frac{5}{120}(A_3)_{7,2}\]
which is
\[-8\cdot \frac{15}{60}-\frac{1}{4}\cdot1024\cdot 0 + \frac{5}{120}\cdot\frac{30}{7} + \frac{5}{120}\cdot\frac{30}{7}+\frac{5}{120}\cdot\frac{11}{7}+\frac{5}{120}\cdot\frac{11}{7}+\frac{5}{120}\cdot\frac{11}{7}+\frac{5}{120}\cdot\frac{11}{7}\]
\[+\frac{5}{120}\cdot\frac{11}{7}+\frac{5}{120}\cdot\frac{11}{7}=-\frac{5}{4}.\]

Now, suppose that $W$ is a regular tournamenton such that
\[8\cdot t(C_3,W) + \frac{1}{4}\cdot1024\cdot t(H_{11},W) =\frac{5}{4}\]
and let us show that $W$ is equal to $1/2$ almost everywhere. The kernel of $A_2$ is spanned by $(1,1,1,1,1,1,1,1)^T$. So, analogously to the proof of Theorem~\ref{th:H10}, we get that $W$ is equal to $1/2$ almost everywhere.
\end{proof}

\begin{proof}[Proof of Theorem~\ref{th:H13}]
We apply Lemma~\ref{lem:flagLem} with the following matrix:
\[A_2=\frac{1}{945}\begin{bmatrix}
3680&   1296&   1080&   -2376&  -6376&  1400&   2808&   -1512 \\
1296&   4528&   -1080&  -1512&  -4424&  -1400&  5832&   -3240 \\
1080&   -1080&  5616&   -2160&  -3240&  3240&   -5400&  1944  \\
-2376&  -1512&  -2160&  4320&   5400&   -1512&  -3240&  1080  \\
-6376&  -4424&  -3240&  5400&   16200&  -5400&  -5400&  3240  \\
1400&   -1400&  3240&   -1512&  -5400&  5400&   -3240&  1512  \\
2808&   5832&   -5400&  -3240&  -5400&  -3240&  16200&  -7560 \\
-1512&  -3240&  1944&   1080&   3240&   1512&   -7560&  4536  \\

\end{bmatrix}.\]
This matrix is positive semidefinite. We need that, for every tournament $J$ on $5$ vertices, the expression
\[8\cdot t_{\inj}(TT_3,J)+\frac{1}{7}\cdot1024\cdot t_{\inj}(H_{13},J) -\sum_{i=1}^8\sum_{j=1}^8b_2(F^2_i\cdot F^2_j; J)\cdot (A_2)_{i,j}\]
is equal to $8/7$. The values of the coefficients can be found in the appendix at the end of the paper. We will only compute this expression in one example for the sake of demonstration. In the case $J=H_{13}$, we get
\[8\cdot \frac{6}{60}+\frac{1}{7}\cdot1024\cdot \frac{1}{120} - \frac{1}{120}(A_2)_{1,7} - \frac{1}{120}(A_2)_{2,7}-\frac{1}{120}(A_2)_{4,5}-\frac{1}{120}(A_2)_{5,4}\]
\[-\frac{2}{120}(A_2)_{5,5}-\frac{1}{120}(A_2)_{5,8}-\frac{1}{120}(A_2)_{7,1}-\frac{1}{120}(A_2)_{7,2}-\frac{2}{120}(A_2)_{7,7}-\frac{1}{120}(A_2)_{8,5}\]
and plugging in yields
\[8\cdot \frac{6}{60}+\frac{1}{7}\cdot1024\cdot \frac{1}{120} - \frac{1}{120}\cdot\frac{2808}{945} - \frac{1}{120}\cdot\frac{5832}{945}-\frac{1}{120}\cdot\frac{5400}{945}-\frac{1}{120}\cdot\frac{5400}{945}\]
\[-\frac{2}{120}\cdot\frac{16200}{945}-\frac{1}{120}\cdot\frac{3240}{945}-\frac{1}{120}\cdot\frac{2808}{945}-\frac{1}{120}\cdot\frac{5832}{945}-\frac{2}{120}\cdot\frac{16200}{945}-\frac{1}{120}\cdot\frac{3240}{945}=\frac{8}{7}.\]

Now, suppose that $W$ is a tournamenton such that
\[8\cdot t(TT_3,W) + \frac{1}{7}\cdot1024\cdot t(H_{13},W) =\frac{8}{7}\]
and let us show that $W$ is equal to $1/2$ almost everywhere. The kernel of $A_2$ is spanned by $(1,1,1,1,1,1,1,1)^T$ and $(1,1,-1,-1,1,1,-1,-1)^T$. By Lemma~\ref{lem:sumAllOrientations}, for almost every $(x_1,x_2,x_3)\in[0,1]^3$, we have 
\[\sum_{i=1}^8t(F_i^2,W)(x_1,x_2,x_3)=t(TT_3,W)(x_1,x_2,x_3).\] 
Combining these two facts with the ``moreover'' part of Lemma~\ref{lem:flagLem}, we must have that, for almost every $(x_1,x_2,x_3)\in[0,1]^3$ and any $(i,j)\in\{1,2,4,5\}\times\{3,4,7,8\}$, 
\[t(F_i^2,W)(x_1,x_2,x_3)+t(F_j^2,W)(x_1,x_2,x_3)=\frac{1}{4}t(TT_3,W)(x_1,x_2,x_3).\]
Applying this in the case $i=5$ and $j=7$ and integrating over all $(x_1,x_2,x_3)$ yields 
\[\label{eq:C4TT3}t(C_4,W)=\frac{1}{8}t(TT_3,W).\]
Also, setting $i=1$ and $j=4$ and integrating gives us 
\[\label{eq:TT4TT3}t(TT_4,W)=\frac{1}{8}t(TT_3,W).\]
So, we conclude that $W=1/2$ almost everywhere by Lemma~\ref{lem:uniqueness}.
\end{proof}

\begin{proof}[Proof of Theorem~\ref{th:H14}]
We apply Lemma~\ref{lem:flagLem} with the following matrices:
\[A_1 = \frac{4}{5}\begin{bmatrix}
1 & -1 & -1 & 1\\
-1 & 1 & 1 & -1\\
-1 & 1 & 1 & -1\\
1 & -1 & -1 & 1
\end{bmatrix}\]
and
\[A_3 = \frac{8}{15}\begin{bmatrix}
3 & -1 & -1 & -1 & -1 & -1 & -1 & 3\\
-1 & 27 & -5 & -5 & -5 & -5 & -5 & -1\\
-1 & -5 & 27 & -5 & -5 & -5 & -5 & -1\\
-1 & -5 & -5 & 27 & -5 & -5 & -5 & -1\\
-1 & -5 & -5 & -5 & 27 & -5 & -5 & -1\\
-1 & -5 & -5 & -5 & -5 & 27 & -5 & -1\\
-1 & -5 & -5 & -5 & -5 & -5 & 27 & -1\\
3 & -1 & -1 & -1 & -1 & -1 & -1 & 3
\end{bmatrix}.\]
Both are positive semidefinite. We need that, for every tournament $J$ on $5$ vertices, the expression
\[8\cdot t_{\inj}(TT_3,J)+\frac{1}{5}\cdot1024\cdot t_{\inj}(H_{14},J) -\sum_{i=1}^4\sum_{j=1}^4b_2(F^1_i\cdot F^1_j; J)\cdot (A_1)_{i,j}-\sum_{i=1}^8\sum_{j=1}^8b_2(F^3_i\cdot F^3_j; J)\cdot (A_3)_{i,j}\]
is equal to $6/5$. Again, the values of the coefficients can be found in the appendix at the end of the paper. For $J=H_8$, the expression becomes
\[8\cdot \frac{10}{60} + \frac{1}{15}\cdot 1024\cdot 0 - \frac{10}{120}\cdot (A_1)_{1,1} - \frac{10}{120}\cdot (A_1)_{1,2}-\frac{10}{120}\cdot (A_1)_{1,4}-\frac{10}{120}\cdot (A_1)_{2,2}\]
\[-\frac{10}{120}\cdot (A_1)_{2,4}-\frac{10}{120}\cdot (A_1)_{4,4}\]
\[=\frac{4}{3}+0-\frac{1}{15}+\frac{1}{15}-\frac{1}{15}-\frac{1}{15}+\frac{1}{15}-\frac{1}{15}=\frac{6}{5}.\]
For $J=H_9$, it is 
\[8\cdot \frac{9}{60} + \frac{1}{15}\cdot 1024\cdot 0 - \frac{6}{120}A_1(1,1) -\frac{18}{120}A_1(1,2) - \frac{6}{120}A_1(1,4) - \frac{6}{120}A_1(2,2)\]
\[- \frac{6}{120}A_1(2,4)-\frac{12}{120}A_1(3,4)-\frac{6}{120}A_1(4,4)-\frac{6}{120}A_3(8,8)\]
\[=\frac{6}{5} + 0 -\frac{1}{25}+\frac{3}{25}-\frac{1}{25}-\frac{1}{25}+\frac{1}{25}+\frac{2}{25}-\frac{1}{25}-\frac{2}{25}=\frac{6}{5}.\]
The rest of the calculations are similar.

Now, suppose that $W$ is a tournamenton such that 
\[8\cdot t(TT_3,W) + \frac{1}{5}\cdot1024\cdot t(H_{14},W) = \frac{6}{5}.\]
The kernel of $A_1$ is spanned by $(1,1,0,0)^T,(1,0,1,0)^T$ and $(0,1,0,1)^T$. By the ``moreover'' part of Lemma~\ref{lem:flagLem}, this implies that 
\[t_2(F_2^1,W)(x_1,x_2)+t_2(F_3^1,W)(x_1,x_2)=\frac{1}{2}t(H_1,W)(x_1,x_2) = \frac{1}{2}W(x_1,x_2)\]
for almost all $(x_1,x_2)\in [0,1]^2$. Integrating this out over all $x_1$ and $x_2$ yields 
\[t(TT_3,W)+t(C_3,W) = \frac{1}{4}\]
which, when combined with \eqref{eq:T3C3}, tells us that 
\begin{equation}\label{eq:itsRegular}t(C_3,W)=1/8.\end{equation} 
The kernel of $A_3$ is spanned by $(1,1,1,1,1,1,1,1)^T$ and $(1,0,0,0,0,0,0,-1)^T$. Applying Lemma~\ref{lem:sumAllOrientations} and the ``moreover'' part of Lemma~\ref{lem:flagLem} again, this implies that $t(F_2^3,W)(x_1,x_2,x_3)=\frac{1}{8}t(C_3,W)(x_1,x_2,x_3)$  and  $t(F_1^3,W)(x_1,x_2,x_3)+t(F_8^3,W)(x_1,x_2,x_3)=\frac{1}{4}t(C_3,W)(x_1,x_2,x_3)$ for almost all $(x_1,x_2,x_3)\in[0,1]^3$. Integrating over all $x_1,x_2$ and $x_3$ and applying \eqref{eq:itsRegular} yields
\[t(C_4,W)=\frac{1}{8}t(C_3,W) = \frac{1}{64}\]
and
\[t(H_5,W)+t(H_7,W) = \frac{1}{4}t(C_3,W)=\frac{1}{32}.\]
Finally, using Lemma~\ref{lem:sumToOne} and \eqref{eq:dt} and the fact that $\aut(TT_4)=\aut(C_4)=1$ and $\aut(H_5)=\aut(H_7)=3$, we get
\[d(TT_4,W)+d(C_4,W)+d(H_5,W)+d(H_7,W)=1\]
\[\Longrightarrow 24t(TT_4,W) + 24t(C_4,W) + 8t(H_5,W)+8t(H_7,W)=1\]
which, since $t(C_4,W)=1/64$ and $t(H_5,W)+t(H_7,W)=1/32$, yields $t(TT_4,W)=1/64$. We are now done by the fact that $TT_4$ forces quasirandomness (see Theorem~\ref{th:qriff}). 
\end{proof}

\section{Conclusion}
\label{sec:concl}

The Forcing Conjecture~\cite{ConlonFoxSudakov10,SkokanThoma04} says that a graph $H$ forces quasirandomness if and only if it is bipartite and contains a cycle. Unfortunately, the results of this paper do not seem to point to such a natural property which distinguishes the tournaments which force quasirandomness in regular tournaments in general. Nonetheless, we wonder whether a deeper investigation into such tournaments could uncover some sort of ``pattern.''

\begin{prob}
\label{prob:classify}
Classify tournaments $H$ which force quasirandomness in regular tournaments. 
\end{prob}

One way in which Problem~\ref{prob:classify} could end up being solved is if, as in the case of tournaments which force quasirandomness in general (not necessarily regular) tournaments, there end up being only finitely many non-transitive tournaments which force quasirandomness in regular tournaments. So, a first step toward Problem~\ref{prob:classify} is to determine whether or not an infinite family of such tournaments exists. 

\begin{ques}
\label{ques:infinitelyMany}
Are there infinitely many non-transitive tournaments which force quasirandomness in regular tournaments?
\end{ques}

More ambitiously, we ask the following.

\begin{ques}
\label{ques:almostAll}
Is it true that almost every tournament forces quasirandomness in regular tournaments? 
\end{ques}

\begin{ack}
The authors thank Bernard Lidick\'y, Daniel Kr\'a\v{l}, Florian Pfender and Jan Volec for valuable discussions. In particular, we thank them for finding the example in the proof of Proposition~\ref{prop:special}---which disproved a conjecture in an earlier arxiv preprint of this paper---and letting us include it here. 
\end{ack}

\appendix

\section{Flag Algebra Coefficients}
\label{app:coeffs}

The purpose of this appendix is to list all of the coefficients $b_2(F_i^1,F_j^1;J)$ for $1\leq i,j\leq 4$ and $b_3(F_i^q,F_j^q;J)$ for $1\leq i,j\leq 8$ and $q\in\{2,3\}$, where $J$ is a tournament on 5 vertices and the flags $F_i^q$ are as in Section~\ref{sec:flags}. All of these coefficients were computed by computer but can be easily checked by hand. It will be convenient to record these coefficients in matrices. For every such $J$, let $B_2^1(J)$ be the $4\times 4$ matrix in which the entry on the $i$th row and $j$th column is $b_2(F_i^1,F_j^1;J)$. Similarly, for every such $J$ and $q\in \{2,3\}$, let $B_3^q(J)$ be the $8\times 8$ matrix in which the entry on the $i$th row and $j$th column is $b_3(F_i^q,F_j^q;J)$. The matrices are as follows.

For $J=H_{8}$, we have
\[B_2^1(H_{8}) = \frac{1}{120}\begin{bmatrix}
10 & 5 & 0 & 5\\
5 & 10 & 0 & 5\\
0 & 0 & 0 & 0\\
5 & 5 & 0 & 10
\end{bmatrix}\]
\[B_3^2(H_{8}) = \frac{1}{120}\begin{bmatrix}
2 & 1 & 0 & 1 & 0 & 0 & 0 & 1\\
1 & 2 & 0 & 1 & 0 & 0 & 0 & 1\\
0 & 0 & 0 & 0 & 0 & 0 & 0 & 0\\
1 & 1 & 0 & 2 & 0 & 0 & 0 & 1\\
0 & 0 & 0 & 0 & 0 & 0 & 0 & 0\\
0 & 0 & 0 & 0 & 0 & 0 & 0 & 0\\
0 & 0 & 0 & 0 & 0 & 0 & 0 & 0\\
1 & 1 & 0 & 1 & 0 & 0 & 0 & 2
\end{bmatrix}\]
\[B_3^3(H_{8}) = \frac{1}{120}\begin{bmatrix}
0 & 0 & 0 & 0 & 0 & 0 & 0 & 0\\
0 & 0 & 0 & 0 & 0 & 0 & 0 & 0\\
0 & 0 & 0 & 0 & 0 & 0 & 0 & 0\\
0 & 0 & 0 & 0 & 0 & 0 & 0 & 0\\
0 & 0 & 0 & 0 & 0 & 0 & 0 & 0\\
0 & 0 & 0 & 0 & 0 & 0 & 0 & 0\\
0 & 0 & 0 & 0 & 0 & 0 & 0 & 0\\
0 & 0 & 0 & 0 & 0 & 0 & 0 & 0
\end{bmatrix}\]
For $J=H_{9}$, we have
\[B_2^1(H_{9}) = \frac{1}{120}\begin{bmatrix}
6 & 9 & 0 & 3\\
9 & 6 & 0 & 3\\
0 & 0 & 0 & 6\\
3 & 3 & 6 & 6
\end{bmatrix}\]
\[B_3^2(H_{9}) =\frac{1}{120}\begin{bmatrix}
0 & 3 & 0 & 0 & 0 & 0 & 0 & 0\\
3 & 0 & 0 & 0 & 0 & 0 & 0 & 0\\
0 & 0 & 0 & 3 & 0 & 0 & 0 & 3\\
0 & 0 & 3 & 0 & 0 & 0 & 0 & 0\\
0 & 0 & 0 & 0 & 0 & 0 & 0 & 0\\
0 & 0 & 0 & 0 & 0 & 0 & 0 & 0\\
0 & 0 & 0 & 0 & 0 & 0 & 0 & 0\\
0 & 0 & 3 & 0 & 0 & 0 & 0 & 0
\end{bmatrix}\]
\[B_3^3(H_{9}) = \frac{1}{120}\begin{bmatrix}
0 & 0 & 0 & 0 & 0 & 0 & 0 & 0\\
0 & 0 & 0 & 0 & 0 & 0 & 0 & 0\\
0 & 0 & 0 & 0 & 0 & 0 & 0 & 0\\
0 & 0 & 0 & 0 & 0 & 0 & 0 & 0\\
0 & 0 & 0 & 0 & 0 & 0 & 0 & 0\\
0 & 0 & 0 & 0 & 0 & 0 & 0 & 0\\
0 & 0 & 0 & 0 & 0 & 0 & 0 & 0\\
0 & 0 & 0 & 0 & 0 & 0 & 0 & 6
\end{bmatrix}\]
For $J=H_{10}$, we have
\[B_2^1(H_{10}) = \frac{1}{120}\begin{bmatrix}
4 & 8 & 1 & 3\\
8 & 4 & 2 & 2\\
1 & 2 & 2 & 7\\
3 & 2 & 7 & 4
\end{bmatrix}\]
\[B_3^2(H_{10}) = \frac{1}{120}\begin{bmatrix}
0 & 0 & 1 & 1 & 0 & 0 & 0 & 0\\
0 & 0 & 2 & 0 & 0 & 0 & 0 & 0\\
1 & 2 & 2 & 1 & 0 & 0 & 0 & 0\\
1 & 0 & 1 & 0 & 0 & 0 & 0 & 0\\
0 & 0 & 0 & 0 & 0 & 0 & 0 & 1\\
0 & 0 & 0 & 0 & 0 & 0 & 0 & 0\\
0 & 0 & 0 & 0 & 0 & 0 & 0 & 1\\
0 & 0 & 0 & 0 & 1 & 0 & 1 & 0
\end{bmatrix}\]
\[B_3^3(H_{10}) = \frac{1}{120}\begin{bmatrix}
0 & 0 & 0 & 0 & 0 & 0 & 0 & 0\\
0 & 0 & 0 & 0 & 0 & 0 & 0 & 1\\
0 & 0 & 0 & 0 & 0 & 0 & 0 & 1\\
0 & 0 & 0 & 0 & 0 & 0 & 0 & 1\\
0 & 0 & 0 & 0 & 0 & 0 & 0 & 1\\
0 & 0 & 0 & 0 & 0 & 0 & 0 & 1\\
0 & 0 & 0 & 0 & 0 & 0 & 0 & 1\\
0 & 1 & 1 & 1 & 1 & 1 & 1 & 0
\end{bmatrix}\]
For $J=H_{11}$, we have
\[B_2^1(H_{11}) = \frac{1}{120}\begin{bmatrix}
6 & 6 & 3 & 3\\
6 & 6 & 0 & 6\\
3 & 0 & 0 & 3\\
3 & 6 & 3 & 6
\end{bmatrix}\]
\[B_3^2(H_{11}) = \frac{1}{120}\begin{bmatrix}
0 & 0 & 3 & 0 & 0 & 0 & 0 & 0\\
0 & 0 & 0 & 3 & 0 & 0 & 0 & 0\\
3 & 0 & 0 & 0 & 0 & 0 & 0 & 0\\
0 & 3 & 0 & 0 & 0 & 0 & 0 & 0\\
0 & 0 & 0 & 0 & 0 & 0 & 0 & 0\\
0 & 0 & 0 & 0 & 0 & 0 & 0 & 3\\
0 & 0 & 0 & 0 & 0 & 0 & 0 & 0\\
0 & 0 & 0 & 0 & 0 & 3 & 0 & 0
\end{bmatrix}\]
\[B_3^3(H_{11}) = \frac{1}{120}\begin{bmatrix}
0 & 0 & 0 & 0 & 0 & 0 & 0 & 3\\
0 & 0 & 0 & 0 & 0 & 0 & 0 & 0\\
0 & 0 & 0 & 0 & 0 & 0 & 0 & 0\\
0 & 0 & 0 & 0 & 0 & 0 & 0 & 0\\
0 & 0 & 0 & 0 & 0 & 0 & 0 & 0\\
0 & 0 & 0 & 0 & 0 & 0 & 0 & 0\\
0 & 0 & 0 & 0 & 0 & 0 & 0 & 0\\
3 & 0 & 0 & 0 & 0 & 0 & 0 & 0
\end{bmatrix}\]
For $J=H_{12}$, we have
\[B_2^1(H_{12}) = \frac{1}{120}\begin{bmatrix}
0 & 3 & 6 & 3\\
3 & 0 & 6 & 3\\
6 & 6 & 6 & 6\\
3 & 3 & 6 & 0
\end{bmatrix}\]
\[B_3^2(H_{12}) = \frac{1}{120}\begin{bmatrix}
0 & 0 & 0 & 0 & 0 & 0 & 0 & 0\\
0 & 0 & 0 & 0 & 0 & 0 & 0 & 0\\
0 & 0 & 0 & 0 & 3 & 0 & 0 & 0\\
0 & 0 & 0 & 0 & 0 & 0 & 0 & 0\\
0 & 0 & 3 & 0 & 0 & 0 & 0 & 0\\
0 & 0 & 0 & 0 & 0 & 0 & 3 & 0\\
0 & 0 & 0 & 0 & 0 & 3 & 0 & 0\\
0 & 0 & 0 & 0 & 0 & 0 & 0 & 0
\end{bmatrix}\]
\[B_3^3(H_{12}) = \frac{1}{120}\begin{bmatrix}
0 & 0 & 0 & 0 & 0 & 0 & 0 & 3\\
0 & 0 & 0 & 0 & 0 & 3 & 0 & 0\\
0 & 0 & 0 & 3 & 0 & 0 & 0 & 0\\
0 & 0 & 3 & 0 & 0 & 0 & 0 & 0\\
0 & 0 & 0 & 0 & 0 & 0 & 3 & 0\\
0 & 3 & 0 & 0 & 0 & 0 & 0 & 0\\
0 & 0 & 0 & 0 & 3 & 0 & 0 & 0\\
3 & 0 & 0 & 0 & 0 & 0 & 0 & 0
\end{bmatrix}\]
For $J=H_{13}$, we have
\[B_2^1(H_{13}) = \frac{1}{120}\begin{bmatrix}
2 & 1 & 4 & 5\\
1 & 2 & 8 & 1\\
4 & 8 & 8 & 4\\
5 & 1 & 4 & 2
\end{bmatrix}\]
\[B_3^2(H_{13}) = \frac{1}{120}\begin{bmatrix}
0 & 0 & 0 & 0 & 0 & 0 & 1 & 0\\
0 & 0 & 0 & 0 & 0 & 0 & 1 & 0\\
0 & 0 & 0 & 0 & 0 & 0 & 0 & 0\\
0 & 0 & 0 & 0 & 1 & 0 & 0 & 0\\
0 & 0 & 0 & 1 & 2 & 0 & 0 & 1\\
0 & 0 & 0 & 0 & 0 & 0 & 0 & 0\\
1 & 1 & 0 & 0 & 0 & 0 & 2 & 0\\
0 & 0 & 0 & 0 & 1 & 0 & 0 & 0
\end{bmatrix}\]
\[B_3^3(H_{13}) = \frac{1}{120}\begin{bmatrix}
0 & 0 & 0 & 0 & 0 & 0 & 0 & 0\\
0 & 0 & 1 & 1 & 1 & 0 & 1 & 0\\
0 & 1 & 0 & 0 & 1 & 1 & 1 & 0\\
0 & 1 & 0 & 0 & 1 & 1 & 1 & 0\\
0 & 1 & 1 & 1 & 0 & 1 & 0 & 0\\
0 & 0 & 1 & 1 & 1 & 0 & 1 & 0\\
0 & 1 & 1 & 1 & 0 & 1 & 0 & 0\\
0 & 0 & 0 & 0 & 0 & 0 & 0 & 0
\end{bmatrix}\]
For $J=H_{14}$, we have
\[B_2^1(H_{14}) = \frac{1}{120}\begin{bmatrix}
4 & 2 & 3 & 5\\
2 & 4 & 6 & 2\\
3 & 6 & 6 & 3\\
5 & 2 & 3 & 4
\end{bmatrix}\]
\[B_3^2(H_{14}) = \frac{1}{120}\begin{bmatrix}
0 & 0 & 0 & 0 & 1 & 0 & 0 & 1\\
0 & 0 & 0 & 0 & 1 & 0 & 1 & 0\\
0 & 0 & 0 & 0 & 0 & 0 & 0 & 0\\
0 & 0 & 0 & 0 & 1 & 0 & 1 & 0\\
1 & 1 & 0 & 1 & 0 & 0 & 0 & 0\\
0 & 0 & 0 & 0 & 0 & 0 & 0 & 0\\
0 & 1 & 0 & 1 & 0 & 0 & 0 & 1\\
1 & 0 & 0 & 0 & 0 & 0 & 1 & 0
\end{bmatrix}\]
\[B_3^3(H_{14}) = \frac{1}{120}\begin{bmatrix}
0 & 0 & 0 & 0 & 0 & 0 & 0 & 0\\
0 & 2 & 0 & 0 & 0 & 1 & 0 & 0\\
0 & 0 & 2 & 1 & 0 & 0 & 0 & 0\\
0 & 0 & 1 & 2 & 0 & 0 & 0 & 0\\
0 & 0 & 0 & 0 & 2 & 0 & 1 & 0\\
0 & 1 & 0 & 0 & 0 & 2 & 0 & 0\\
0 & 0 & 0 & 0 & 1 & 0 & 2 & 0\\
0 & 0 & 0 & 0 & 0 & 0 & 0 & 0
\end{bmatrix}\]
For $J=H_{15}$, we have
\[B_2^1(H_{15}) = \frac{1}{120}\begin{bmatrix}
4 & 2 & 7 & 3\\
2 & 4 & 2 & 8\\
7 & 2 & 2 & 1\\
3 & 8 & 1 & 4
\end{bmatrix}\]
\[B_3^2(H_{15}) = \frac{1}{120}\begin{bmatrix}
0 & 0 & 0 & 0 & 1 & 0 & 1 & 0\\
0 & 0 & 0 & 0 & 0 & 1 & 0 & 1\\
0 & 0 & 0 & 0 & 0 & 0 & 0 & 0\\
0 & 0 & 0 & 0 & 0 & 2 & 0 & 0\\
1 & 0 & 0 & 0 & 0 & 0 & 0 & 0\\
0 & 1 & 0 & 2 & 0 & 2 & 0 & 1\\
1 & 0 & 0 & 0 & 0 & 0 & 0 & 0\\
0 & 1 & 0 & 0 & 0 & 1 & 0 & 0
\end{bmatrix}\]
\[B_3^3(H_{15}) = \frac{1}{120}\begin{bmatrix}
0 & 1 & 1 & 1 & 1 & 1 & 1 & 0\\
1 & 0 & 0 & 0 & 0 & 0 & 0 & 0\\
1 & 0 & 0 & 0 & 0 & 0 & 0 & 0\\
1 & 0 & 0 & 0 & 0 & 0 & 0 & 0\\
1 & 0 & 0 & 0 & 0 & 0 & 0 & 0\\
1 & 0 & 0 & 0 & 0 & 0 & 0 & 0\\
1 & 0 & 0 & 0 & 0 & 0 & 0 & 0\\
0 & 0 & 0 & 0 & 0 & 0 & 0 & 0
\end{bmatrix}\]
For $J=H_{16}$, we have
\[B_2^1(H_{16}) = \frac{1}{120}\begin{bmatrix}
6 & 3 & 6 & 3\\
3 & 6 & 0 & 9\\
6 & 0 & 0 & 0\\
3 & 9 & 0 & 6
\end{bmatrix}\]
\[B_3^2(H_{16}) = \frac{1}{120}\begin{bmatrix}
0 & 0 & 0 & 0 & 0 & 3 & 0 & 0\\
0 & 0 & 0 & 0 & 0 & 3 & 0 & 0\\
0 & 0 & 0 & 0 & 0 & 0 & 0 & 0\\
0 & 0 & 0 & 0 & 0 & 0 & 0 & 3\\
0 & 0 & 0 & 0 & 0 & 0 & 0 & 0\\
3 & 3 & 0 & 0 & 0 & 0 & 0 & 0\\
0 & 0 & 0 & 0 & 0 & 0 & 0 & 0\\
0 & 0 & 0 & 3 & 0 & 0 & 0 & 0
\end{bmatrix}\]
\[B_3^3(H_{16}) = \frac{1}{120}\begin{bmatrix}
6 & 0 & 0 & 0 & 0 & 0 & 0 & 0\\
0 & 0 & 0 & 0 & 0 & 0 & 0 & 0\\
0 & 0 & 0 & 0 & 0 & 0 & 0 & 0\\
0 & 0 & 0 & 0 & 0 & 0 & 0 & 0\\
0 & 0 & 0 & 0 & 0 & 0 & 0 & 0\\
0 & 0 & 0 & 0 & 0 & 0 & 0 & 0\\
0 & 0 & 0 & 0 & 0 & 0 & 0 & 0\\
0 & 0 & 0 & 0 & 0 & 0 & 0 & 0
\end{bmatrix}\]
For $J=H_{17}$, we have
\[B_2^1(H_{17}) = \frac{1}{120}\begin{bmatrix}
2 & 4 & 5 & 3\\
4 & 2 & 4 & 4\\
5 & 4 & 4 & 5\\
3 & 4 & 5 & 2
\end{bmatrix}\]
\[B_3^2(H_{17}) = \frac{1}{120}\begin{bmatrix}
0 & 0 & 0 & 0 & 0 & 1 & 0 & 0\\
0 & 0 & 0 & 0 & 1 & 0 & 0 & 0\\
0 & 0 & 0 & 0 & 0 & 1 & 1 & 1\\
0 & 0 & 0 & 0 & 0 & 0 & 1 & 0\\
0 & 1 & 0 & 0 & 0 & 1 & 0 & 0\\
1 & 0 & 1 & 0 & 1 & 0 & 0 & 0\\
0 & 0 & 1 & 1 & 0 & 0 & 0 & 0\\
0 & 0 & 1 & 0 & 0 & 0 & 0 & 0
\end{bmatrix}\]
\[B_3^3(H_{17}) = \frac{1}{120}\begin{bmatrix}
0 & 1 & 1 & 0 & 1 & 0 & 0 & 0\\
1 & 0 & 0 & 1 & 0 & 0 & 0 & 0\\
1 & 0 & 0 & 0 & 0 & 0 & 1 & 0\\
0 & 1 & 0 & 0 & 0 & 0 & 0 & 1\\
1 & 0 & 0 & 0 & 0 & 1 & 0 & 0\\
0 & 0 & 0 & 0 & 1 & 0 & 0 & 1\\
0 & 0 & 1 & 0 & 0 & 0 & 0 & 1\\
0 & 0 & 0 & 1 & 0 & 1 & 1 & 0
\end{bmatrix}\]
For $J=H_{18}$, we have
\[B_2^1(H_{18}) = \frac{1}{120}\begin{bmatrix}
0 & 3 & 6 & 3\\
3 & 0 & 6 & 3\\
6 & 6 & 6 & 6\\
3 & 3 & 6 & 0
\end{bmatrix}\]
\[B_3^2(H_{18}) = \frac{1}{120}\begin{bmatrix}
0 & 0 & 0 & 0 & 0 & 0 & 0 & 0\\
0 & 0 & 0 & 0 & 0 & 0 & 0 & 0\\
0 & 0 & 0 & 0 & 1 & 1 & 1 & 0\\
0 & 0 & 0 & 0 & 0 & 0 & 0 & 0\\
0 & 0 & 1 & 0 & 0 & 1 & 1 & 0\\
0 & 0 & 1 & 0 & 1 & 0 & 1 & 0\\
0 & 0 & 1 & 0 & 1 & 1 & 0 & 0\\
0 & 0 & 0 & 0 & 0 & 0 & 0 & 0
\end{bmatrix}\]
\[B_3^3(H_{18}) = \frac{1}{120}\begin{bmatrix}
0 & 0 & 0 & 1 & 0 & 1 & 1 & 0\\
0 & 0 & 1 & 0 & 1 & 0 & 0 & 1\\
0 & 1 & 0 & 0 & 1 & 0 & 0 & 1\\
1 & 0 & 0 & 0 & 0 & 1 & 1 & 0\\
0 & 1 & 1 & 0 & 0 & 0 & 0 & 1\\
1 & 0 & 0 & 1 & 0 & 0 & 1 & 0\\
1 & 0 & 0 & 1 & 0 & 1 & 0 & 0\\
0 & 1 & 1 & 0 & 1 & 0 & 0 & 0
\end{bmatrix}\]
For $J=H_{19}$, we have
\[B_2^1(H_{19}) = \frac{1}{120}\begin{bmatrix}
0 & 0 & 5 & 5\\
0 & 0 & 10 & 0\\
5 & 10 & 10 & 5\\
5 & 0 & 5 & 0
\end{bmatrix}\]
\[B_3^2(H_{19}) = \frac{1}{120}\begin{bmatrix}
0 & 0 & 0 & 0 & 0 & 0 & 0 & 0\\
0 & 0 & 0 & 0 & 0 & 0 & 0 & 0\\
0 & 0 & 0 & 0 & 0 & 0 & 0 & 0\\
0 & 0 & 0 & 0 & 0 & 0 & 0 & 0\\
0 & 0 & 0 & 0 & 0 & 0 & 5 & 0\\
0 & 0 & 0 & 0 & 0 & 0 & 0 & 0\\
0 & 0 & 0 & 0 & 5 & 0 & 0 & 0\\
0 & 0 & 0 & 0 & 0 & 0 & 0 & 0
\end{bmatrix}\]
\[B_3^3(H_{19}) = \frac{1}{120}\begin{bmatrix}
0 & 0 & 0 & 0 & 0 & 0 & 0 & 0\\
0 & 0 & 0 & 0 & 0 & 0 & 5 & 0\\
0 & 0 & 0 & 0 & 0 & 5 & 0 & 0\\
0 & 0 & 0 & 0 & 5 & 0 & 0 & 0\\
0 & 0 & 0 & 5 & 0 & 0 & 0 & 0\\
0 & 0 & 5 & 0 & 0 & 0 & 0 & 0\\
0 & 5 & 0 & 0 & 0 & 0 & 0 & 0\\
0 & 0 & 0 & 0 & 0 & 0 & 0 & 0
\end{bmatrix}\]

\end{document}